\theoremstyle{plain}
\newtheorem{theorem}{Theorem}
\newtheorem{lemma}{Lemma}
\newtheorem{observation}{Observation}
\newtheorem{claim}{Claim}[theorem]
\newenvironment{claimproof}{\noindent\textit{Proof of Claim \theclaim:}}{\hfill$\lrcorner$} 
\renewcommand{\epsilon}{\varepsilon}
\DeclareMathOperator{\dist}{dist}
\DeclareMathOperator{\diam}{diam}
\DeclareMathOperator{\BCSP}{BCSP}
\newcommand{\Oh}{\mathcal{O}}
\newcommand{\NP}{$\mathsf{NP}$}
\newcommand{\p}{\mathsf{p}}
\newcommand{\homo}[1]{\textsc{Hom}(\ensuremath{#1})\xspace}
\newcommand{\lhomo}[1]{\textsc{LHom}(\ensuremath{#1})\xspace}
\newcommand{\cR}{\mathcal{R}}
\newcommand{\vphi}{\varphi}
\title{$C_{2k+1}$-coloring of bounded-diameter graphs}
\author{Marta Piecyk\thanks{This research was funded by Polish National Science Centre, grant no. 2022/45/N/ST6/00237.} \\ \texttt{marta.piecyk.dokt@pw.edu.pl}}
\affil{Faculty of Mathematics and Information Science, Warsaw University of Technology}
\date{}
\begin{document}
\maketitle

\begin{abstract}
For a fixed graph $H$, in the graph homomorphism problem, denoted by $\homo{H}$, we are given a graph $G$ and we have to determine whether there exists an edge-preserving mapping $\vphi: V(G) \to V(H)$. 
Note that $\homo{C_3}$, where $C_3$ is the cycle of length $3$, is equivalent to $3$-\textsc{Coloring}.
The question of whether $3$-\textsc{Coloring} is polynomial-time solvable on diameter-$2$ graphs is a well-known open problem.
In this paper we study the $\homo{C_{2k+1}}$ problem  on bounded-diameter graphs for $k\geq 2$, so we consider all other odd cycles than $C_3$.
We prove that for $k\geq 2$, the $\homo{C_{2k+1}}$ problem is polynomial-time solvable on diameter-$(k+1)$ graphs -- note that such a result for $k=1$ would be precisely a polynomial-time algorithm for $3$-\textsc{Coloring} of diameter-$2$ graphs.

Furthermore, we give subexponential-time algorithms for diameter-$(k+2)$ graphs.

We complement these results with a lower bound for diameter-$(2k+2)$ graphs -- in this class of graphs the $\homo{C_{2k+1}}$ problem is NP-hard and cannot be solved in subexponential-time, unless the ETH fails.

Finally, we consider another direction of generalizing $3$-\textsc{Coloring} on diameter-$2$ graphs. We consider other target graphs $H$ than odd cycles but we restrict ourselves to diameter $2$. We show that if $H$ is triangle-free, then $\homo{H}$ is polynomial-time solvable on diameter-$2$ graphs.

\end{abstract}


\section{Introduction}
A natural approach to computationally hard problems is to restrict the class of input graphs, for example by bounding some parameters. One of such parameters is the diameter, i.e., for a graph $G$, its diameter is the least integer $d$ such that for every pair of vertices $u,v$ of $G$, there is a $u$-$v$ path on at most $d$ edges. We say that $G$ is a diameter-$d$ graph, if its diameter is at most $d$. Recently, bounded-diameter graphs received a lot of attention~\cite{DBLP:conf/mfcs/MartinPS19,DBLP:conf/wads/KlimosovaS23,DBLP:journals/algorithmica/MertziosS16,DBLP:journals/combinatorics/BrauseGMOPS22,DBLP:journals/ipl/BonamyDFJP18,DBLP:journals/dam/MartinPS22,DBLP:journals/siamdm/DebskiPR22,DBLP:journals/combinatorics/CamposGILSS21}. It is known that graphs from real life applications often have bounded diameter, for instance social networks tend to have bounded diameter~\cite{DBLP:journals/socnet/Schnettler09}. Furthermore, almost all graphs have diameter $2$, i.e., the probability that a random graph on $n$ vertices has diameter $2$ tends to $1$ when $n$ tends to infinity~\cite{korshunov}. Therefore, solving a problem on bounded-diameter graphs captures a wide class of graphs. On the other hand, not all of the standard approaches can be used on bounded-diameter -- note that the class of diameter-$d$ graphs is not closed under vertex deletion.

Even if we consider the class of diameter-$2$ graphs, its members can contain any graph as an induced subgraph. Indeed, consider any graph $G$, and let $G^+$ be the graph obtained from $G$ by adding a universal vertex $u$, i.e., we add vertex $u$ and make it adjacent to all vertices of $G$. It is straightforward to observe that the diameter of $G^+$ is at most $2$. This construction can be used for many graph problems on diameter-$2$ graphs as a hardess reduction, which proves that they cannot be solved in subexponential time under the Exponential Time Hypothesis (ETH, see~\cite{DBLP:journals/jcss/ImpagliazzoP01}), for instance \textsc{Max Independent Set}.

The construction of $G^+$ also gives us a reduction from $(k-1)$-\textsc{Coloring} to $k$-\textsc{Coloring} on diameter-$2$ graphs, and thus for any $k\geq 4$, the $k$-\textsc{Coloring} problem on diameter-$2$ graphs is \NP-hard and cannot be solved in subexponential time, unless the ETH fails. Note that this argument does not work for $k=3$ since then we reduce from $2$-\textsc{Coloring}, which is polynomial-time solvable. A textbook reduction from \textsc{NAE-Sat} implies that for $d\geq 4$, the $3$-\textsc{Coloring} problem is \NP-hard and cannot be solved in subexponential-time, unless the ETH fails~\cite{DBLP:books/daglib/0072413}. Therefore, it is only interesting to study $3$-\textsc{Coloring} on diameter-$2$ and -$3$ graphs. Mertzios and Spirakis proved that $3$-\textsc{Coloring} is \NP-hard on diameter-$3$ graphs~\cite{DBLP:journals/algorithmica/MertziosS16}. However, the question of whether $3$-\textsc{Coloring} can be solved in polynomial time on diameter-$2$ graphs remains open.

For $3$-\textsc{Coloring} on diameter-$2$ graphs, there were given subexponential-time algorithms, first by Mertzios and Spirakis with running time $2^{\Oh(\sqrt{n\log{n}})}$~\cite{DBLP:journals/algorithmica/MertziosS16}. This was later improved by Dębski, Piecyk, and Rzążewski, who gave an algorithm with running time $2^{\Oh(n^{1/3}\cdot \log^2n)}$~\cite{DBLP:journals/siamdm/DebskiPR22}. They also provided a subexponential-time algorithm for $3$-\textsc{Coloring} for diameter-$3$ graphs.

The $3$-\textsc{Coloring} problem on bounded-diameter graphs was also intensively studied on instances with some additional restrictions, i.e., on graphs with some forbidden induced subgraphs -- and on such graph classes polynomial-time algorithms were given~\cite{DBLP:conf/mfcs/MartinPS19,DBLP:conf/wads/KlimosovaS23,DBLP:journals/dam/MartinPS22}.

One of the generalizations of graph coloring are homomorphisms. For a fixed graph $H$, in the graph homomorphism problem, denoted by $\homo{H}$, we are given a graph $G$, and we have to determine whether there exists an edge-preserving mapping $\vphi: V(G)\to V(H)$, i.e., for every $uv\in E(G)$, it holds that $\vphi(u)\vphi(v)\in E(H)$. Observe that for $K_k$ being a complete graph on $k$ vertices, the $\homo{K_k}$ problem is equivalent to $k$-\textsc{Coloring}.
Observe also that the problem is trivial when $H$ contains a vertex $x$ with a loop since we can map all vertices of $G$ to $x$. In case when $H$ is bipartite, in fact we have to verify whether $G$ is bipartite and this can be done in polynomial time. Hell and Ne\v{s}et\v{r}il proved that for all other graphs $H$, i.e., loopless and non-bipartite, the $\homo{H}$ problem is \NP-hard~\cite{HELL199092}.
Such a complete dichotomy was provided by Feder, Hell, and Huang also for the list version of the problem~\cite{DBLP:journals/jgt/FederHH03}.
The graph homomorphism problem and its variants in various graph classes and under various parameterizations received recently a lot of attention~\cite{DBLP:conf/esa/OkrasaPR20,DBLP:conf/icalp/GanianHKOS22,DBLP:conf/soda/FockeMR22,DBLP:conf/stoc/BulatovK22,DBLP:conf/stoc/CaiM23,DBLP:journals/algorithmica/ChitnisEM17,DBLP:conf/stoc/CurticapeanDM17,DBLP:conf/soda/FockeMR22,DBLP:journals/siamcomp/OkrasaR21,DBLP:journals/corr/abs-2312-03859}. We also point out that among all target graphs $H$, a lot of attention received odd cycles~\cite{DBLP:journals/jgt/Gerards88,DBLP:journals/dam/BeaudouFN22,DBLP:journals/combinatorica/EbsenS20,DBLP:journals/dm/SparlZ04,DBLP:journals/jct/Lai89}. Note that the cycle on $5$ vertices is the smallest graph $H$ such that the $\homo{H}$ problem is not equivalent to graph coloring. 

\paragraph{Our contribution.}
In this paper we consider the $\textsc{(L)Hom}(C_{2k+1})$ problem on bounded-diameter graphs, where $C_{2k+1}$ denotes the cycle on $2k+1$ vertices. Note that for $k=1$, we have $C_3$, so this problem is equivalent to $3$-\textsc{Coloring}. In this work we consider all other values of $k$.
Our first result is a polynomial-time algorithm for $\homo{C_{2k+1}}$ on diameter-$(k+1)$ graphs.

\begin{restatable}{theorem}{thmpoly}\label{thm:poly}
 Let $k\geq 2$. Then $\lhomo{C_{2k+1}}$ can be solved in polynomial time on diameter-$(k+1)$ graphs.
\end{restatable}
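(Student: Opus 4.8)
The plan is to combine a few polynomial-time reductions with a structural dichotomy exploiting the tension between small diameter and large odd girth. First I would handle preprocessing. Since a disconnected graph has infinite diameter, $G$ is connected. As $C_{2k+1}$ has odd girth $2k+1\ge 5$, reject unless $G$ has odd girth at least $2k+1$ (checkable in polynomial time via a shortest odd cycle computation; in particular this forces $G$ triangle-free). Run arc-consistency on the lists --- repeatedly delete $c$ from $L(v)$ whenever some neighbour $u$ of $v$ satisfies $L(u)\cap N_{C_{2k+1}}(c)=\emptyset$ --- and reject if any list empties. Finally, repeatedly identify any two vertices having two common neighbours: no two distinct vertices of $C_{2k+1}$ have two common neighbours, so any homomorphism must identify them; after this step $G$ has girth at least $5$. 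All of this is polynomial and does not change feasibility.

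The key structural tool is an odd-girth obstruction. For every edge $uv$ and every vertex $w$ with $\dist_G(w,u)=\dist_G(w,v)=m$, concatenating a shortest $w$--$u$ path, the edge $uv$, and a shortest $v$--$w$ path yields a closed walk of odd length $2m+1$, so $m\ge k$ --- otherwise reject. Equivalently, in the BFS layering $L_0,L_1,\dots,L_{k+1}$ of $G$ from any vertex $r$, the layers $L_0,\dots,L_{k-1}$ are independent, hence every edge of $G$ either joins two consecutive layers or lies inside $L_k\cup L_{k+1}$. So, from the viewpoint of any root, $G$ is ``layered and bipartite-like'' apart from its last two layers, and every odd cycle of $G$ meets $L_k\cup L_{k+1}$. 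Moreover, in a diameter-$(k+1)$ graph a shortest odd cycle has length at most $2(k+1)+1$ (a shorter path through $G$ between two far-apart vertices of it would create a shorter odd closed walk), so any shortest odd cycle $C$ of $G$ has length exactly $2k+1$ or $2k+3$.

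I would then split on whether $G$ is bipartite. If $G$ is bipartite, $G$ maps to a single edge, and I expect the layered structure above to let one reduce the list-homomorphism instance to a $2$-SAT-type problem; I regard this as the more tractable case. If $G$ is non-bipartite, fix a shortest odd cycle $C$; since $k$ is a constant, $C$ has $\Oh(1)$ vertices and there are only $\Oh(1)$ list-homomorphisms $\psi\colon G[V(C)]\to C_{2k+1}$, so I branch over all of them. Each $\psi$ restricted to $C$ has winding number $\pm 1$ around $C_{2k+1}$ and is therefore surjective onto $V(C_{2k+1})$. Since a homomorphism cannot increase distances, any extension $\vphi$ of $\psi$ must satisfy $\dist_{C_{2k+1}}(\vphi(x),\psi(c))\le \dist_G(x,c)$ for every $x\in V(G)$ and $c\in V(C)$; using surjectivity of $\psi$ on $C$ together with arc-consistency, the identifications above, and the layered structure, I would argue that all remaining lists collapse to size at most $2$. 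A $\lhomo{C_{2k+1}}$ instance with all lists of size $\le 2$ is a $2$-SAT instance (every binary constraint over two-element domains is $2$-SAT expressible) and is solved in polynomial time; if it is satisfiable for some choice of $\psi$, accept, otherwise reject.

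The heart of the difficulty --- and where I expect the bulk of the proof of \Cref{thm:poly} to lie --- is the vertices $x$ that are ``far'' from the reference cycle $C$, meaning at distance roughly $k$ from every vertex of $V(C)$: for these the inequalities $\dist_{C_{2k+1}}(\vphi(x),\psi(c))\le\dist_G(x,c)$ are vacuous, so $\psi$ imposes no constraint on $\vphi(x)$ directly. Controlling them is the crux: one must show they lie in the last two BFS layers of the vertices of $C$, exploit the layered structure once more, and use the girth-$\ge 5$ condition via a Moore-type argument to limit how such a vertex can attach to the rest of $G$ --- aiming to prove that, after arc-consistency, no feasible instance can keep a list of size $\ge 3$ on such a vertex (or, failing that, to peel off the sub-instance they induce and show it is itself of a tractable, layered form). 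The remaining components --- the bipartite case and the $2$-SAT bookkeeping --- I expect to be comparatively routine.
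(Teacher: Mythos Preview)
Your overall shape is close to the paper's: anchor a precoloured $(2k+1)$-cycle (the paper simply guesses $2k+1$ vertices $c_0,\dots,c_{2k}$ with $c_i\mapsto i$ and adds the missing edges, rather than locating a shortest odd cycle, but this is cosmetic), propagate distance constraints from it, and aim for a $2$-\textsc{Sat} reduction. You also correctly isolate the hard case --- vertices at distance $k+1$ from two consecutive $c_i,c_{i+1}$.

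The genuine gap is that your primary hope is false: after arc-consistency the lists do \emph{not} in general collapse to size $\le 2$. What the paper proves is that every surviving list has size at most~$3$, and that a size-$3$ list is necessarily of the form $\{j-1,j,j+1\}$ (type $(1,1)$). A vertex $v$ with $L(v)=\{j-1,j,j+1\}$ can have neighbours with lists $\{j-2,j-1,j\}$, $\{j-1,j,j+1\}$, or $\{j,j+1,j+2\}$; every element of $L(v)$ has a neighbour in each of these, so arc-consistency removes nothing. Your girth/identification preprocessing does not help here either. Thus the step ``lists go to $\le 2$'' fails, and the fallback you sketch (``peel off the sub-instance and hope it is tractable'') is exactly where the paper does the real work.

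What the paper does instead is the following. Call $j$ the \emph{middle} of $\{j-1,j,j+1\}$ and say $v\in V_3$ is $\vphi$-middle if $\vphi(v)$ equals the middle of $L(v)$. A direct check shows that along any edge of $G[V_3]$, middle-ness must flip when the two endpoints have the same list and must be preserved when their lists differ by a shift of one. Hence each connected component $S$ of $G[V_3]$ carries a bipartition $(U_1,U_2)$ such that in any list homomorphism either all of $U_1$ is middle and none of $U_2$ is, or vice versa --- a single Boolean choice per component. The paper then builds a \textsc{BCSP} on $V_1\cup V_2$ (where all lists have size $\le 2$) with extra binary constraints on the $V_2$-neighbours of each $S$ that simultaneously encode ``every $v\in V_3$ retains a feasible colour'' and ``the colours seen on the $V_2$-boundary of $S$ are consistent with one of the two global modes of $S$''; this instance is solved via \cref{lem:red-2-sat}. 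The idea you are missing is precisely this middle/non-middle propagation on $G[V_3]$ and the encoding of the per-component bit through binary constraints on its $V_2$-boundary.
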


Note that such a result for $k=1$ would yield a polynomial-time algorithm for $3$-\textsc{Coloring} on diameter-$2$ graphs. Let us discuss the crucial points where this algorithm cannot be applied directly for $k=1$. The first property, which holds for every cycle except $C_3$ and $C_6$, is that if for some set $S$ of vertices, any two of them have a common neighbor, then there is a vertex that is a common neighbor of all vertices of $S$. Furthermore, for every cycle of length at least $5$ except $C_6$, for a set $S$ of $3$ vertices, every vertex of $S$ has a private neighbor with respect to $S$, i.e., a neighbor that is non-adjacent to any other vertex of $S$.

We first show that for an instance of $\lhomo{C_{2k+1}}$, for each vertex $v$ we can deduce the set of vertices it can be mapped to and bound the size of each list by $3$.
The properties discussed above allow us to encode coloring of a vertex with list of size $3$ using its neighbors with lists of size two, and such a reduced instance of a slightly more general problem (we have more constraints than just the edges, but all of them are binary) can be solved in polynomial time by reduction to $2$-\textsc{Sat}, similar to the one of Edwards~\cite{DBLP:journals/tcs/Edwards86}.

Furthermore, we provide a subexponential-time algorithm for diameter-$(k+2)$ graphs.\footnote{In the extended abstract of the paper that appeared at MFCS 2024 we stated the result also for diameter-$(k+3)$ graphs. Unfortunately, the proof does not work for diameter-$(k+3)$ graphs for every $k\geq 2$. However, it does for $k=2$, and the proof can be found in \cref{sec:subexp}.}
\begin{restatable}{theorem}{thmsubexp}\label{thm:subexp}
 Let $k\geq 2$. Then $\lhomo{C_{2k+1}}$ can be solved in time $\exp\left(\Oh\left((n\log{n})^{\frac{k+1}{k+2}}\right)\right)$ on diameter-$(k+2)$ $n$-vertex graphs.
\end{restatable}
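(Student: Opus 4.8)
The natural plan is to branch on a small vertex set so that each branch reduces to a polynomially solvable case --- either directly via \cref{thm:poly} or, after further propagation, via the reduction to $2$-\textsc{Sat} inside its proof. Let $(G,L)$ be the input, $n=|V(G)|$, and $\diam(G)\le k+2$. As a first step I would run the list-shrinking procedure of \cref{thm:poly}, started from a fixed vertex: it either reports a no-instance or terminates with all lists of size at most a constant $c=c(k)$, and with the instance recast inside the binary-constraint framework used in \cref{thm:poly}. I then use two facts from that framework as black boxes: first, an instance in which every list has size at most $2$ is solvable in polynomial time by the Edwards-style reduction to $2$-\textsc{Sat}, which is exactly where $k\ge2$ (equivalently $C_{2k+1}\notin\{C_3,C_6\}$) is needed; second, the ``common-neighbour'' and ``private-neighbour'' properties of $C_{2k+1}$ discussed after \cref{thm:poly}, which make local colourability conditions reducible to binary ones.

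The engine would be a dichotomy on degrees, guided by the ball-growth inequality $n\le 1+\Delta+\cdots+\Delta^{k+2}$, which forces $\Delta(G)=\Omega(n^{1/(k+2)})$ and points to a degree threshold $\delta_0$ of order $(n\log n)^{1/(k+2)}$. One natural route is to peel off vertices of current degree below $\delta_0$ in a degeneracy-style order, eliminating each such vertex $v$ by replacing it with constraints among $N(v)$ that encode ``$v$ is colourable given the colours of its neighbours'' --- using the common-neighbour property to keep these constraints binary --- without ever requiring the residual graph to retain bounded diameter. After the peeling, the surviving graph $G'$ has minimum degree at least $\delta_0$, and for \emph{any} graph of minimum degree $\delta_0$ a uniformly random vertex subset of size $\Oh\big((|V(G')|/\delta_0)\log n\big)=\Oh\big((n\log n)^{(k+1)/(k+2)}\big)$ is a dominating set with high probability (this is independent of the diameter). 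Guessing $\vphi$ on such a dominating set in $c^{\,\Oh((n\log n)^{(k+1)/(k+2)})}$ ways forces every list of $G'$ down to size at most $2$; combined with the binary constraints produced during peeling, the result is a size-$\le 2$ instance that $2$-\textsc{Sat} resolves in polynomial time, after which the peeled vertices are coloured in reverse order. This gives total running time $\exp\big(\Oh((n\log n)^{(k+1)/(k+2)})\big)$, with the $\log n$ coming from the union bound in the random-dominating-set step.

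The genuinely delicate point --- and the place where diameter $k+2$ rather than $k+1$ is paid for, and where the failure at diameter $k+3$ for general $k$ presumably surfaces --- is the interface between these ingredients. One has to verify that the list-shrinking of \cref{thm:poly}, which is calibrated to diameter $k+1$, still outputs constant-size lists when run from a fixed vertex in a diameter-$(k+2)$ graph; and, more seriously, that eliminating a low-degree vertex and then eliminating one of its former neighbours keeps \emph{all} constraints binary and of a shape that $2$-\textsc{Sat} can simulate, which is not automatic once a vertex carries a mixture of edge-constraints and the constraints created by earlier eliminations (the plain ``common-neighbour'' reformulation is only literally binary in favourable cases). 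Getting this bookkeeping right, together with the exact admissible value of $\delta_0$, is what I expect to fix the exponent $\frac{k+1}{k+2}$ and to explain why the same scheme does not reach diameter $k+3$ uniformly in $k$.
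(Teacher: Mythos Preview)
Your degeneracy-peeling route is not the paper's, and the gap you flag is real. The Helly-type statement of \cref{obs:neigh-2-3} --- pairwise common neighbour in $L(v)$ implies a global one --- is specific to \emph{edge} constraints into a single list. After you eliminate $v$ and then try to eliminate some $w\in N(v)$ with $|L(w)|\ge 3$, the constraints at $w$ are a mixture of genuine edges and the synthetic ``common-neighbour-in-$L(v)$'' relations; for such mixed constraints the analogue of \cref{obs:neigh-2-3} can fail (three subsets of a $3$-element $L(w)$ can pairwise intersect yet have empty common intersection). You give no argument that the particular synthetic relations arising here avoid this, and I do not see one; the lists at peeled vertices can have size up to five before you ever reach the dominating-set step, so the size-$\le 2$ case (where Helly is trivial) does not save you. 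Without this, step~2 of your plan does not produce a binary instance, and the later $2$-\textsc{Sat} call is unjustified.

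The paper sidesteps elimination entirely. After guessing a precoloured $(2k+1)$-cycle $C$, it runs a two-rule branching. Rule~(B1): while some $v\in V_{\ge2}$ has at least $(\mu\log\mu)^{1/(k+2)}$ neighbours in $V_{\ge2}$, branch into ``$L(v)=\{a\}$'' versus ``remove $a$ from $L(v)$'' for one carefully chosen colour $a$, picked so that the first branch shrinks the lists of at least a $\tfrac{1}{2k+1}$-fraction of those neighbours; this two-way recursion yields the $\exp\big(\Oh((n\log n)^{(k+1)/(k+2)})\big)$ bound. Rule~(B2): once $G[V_{\ge2}]$ has small maximum degree, choose $v$ with $\dist(v,C)\ge\lceil(k+2)/2\rceil$ if any such $v$ exists, brute-force colour the now-small ball $N^{\le k+1}[v]\cap V_{\ge2}$, and stop. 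Choosing $v$ far from $C$ is the structural idea your plan lacks: it forces every vertex that still has $|L|\ge3$ at a leaf to lie within distance $\lfloor(k+2)/2\rfloor$ of $C$, whence by \cref{obs:lists-dist} and \cref{obs:lists-dist-two} its list is an independent set inside five consecutive cycle vertices, i.e.\ of type $(2,2)$. Such leaf instances are solved not by a generic size-$\le2$ reduction but by a dedicated \cref{lem:lists-3}, which encodes each $V_3$--$V_3$ edge as explicit binary constraints between the $V_2$-neighbours of its endpoints and whose correctness check uses the specific shape of type-$(2,2)$ lists. It is this distance-to-$C$ bookkeeping, rather than any elimination or dominating-set argument, that makes diameter $k+2$ go through and also explains why the scheme breaks at diameter $k+3$ for $k\ge3$.
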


Here the branching part of the algorithm is rather standard. The more involved part is to show that after applying braching and reduction rules we are left with an instance that can be solved in polynomial time. Similar to \cref{thm:poly}, we first analyze the lists of all vertices, and then reduce to an instance of more general problem where every vertex has list of size at most $2$.

We complement \cref{thm:poly} and \cref{thm:subexp} with the following \NP-hardness result -- since our reduction from $3$-\textsc{Sat} is linear, we also prove that the problem cannot be solved in subexponential time under the ETH.

\begin{restatable}{theorem}{thmhardness}\label{thm:hardness}
Let $k\geq 2$. The $\homo{C_{2k+1}}$ problem is \NP-hard on graphs of radius $k+1$ (and thus diameter $(2k+2)$) and cannot be solved in subexponential time, unless the ETH fails.
\end{restatable}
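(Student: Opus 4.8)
The plan is to give a linear reduction from $3$-\textsc{Sat}. The standard starting point is the classical \NP-hardness reduction showing $\homo{C_{2k+1}}$ is hard on general graphs (this follows from Hell–Nešetřil, but we want an explicit gadget-based linear reduction so that the ETH lower bound is preserved). So first I would recall/reconstruct a variable gadget and a clause gadget for $C_{2k+1}$-coloring: a common trick is to work with ``indicator'' paths — the parity/distance structure of $C_{2k+1}$ means that a path of a suitable length forces the endpoints to lie at a controlled distance on the cycle, so one can build a gadget where an edge may be ``consistently'' colored in exactly two ways, encoding a Boolean value. One fixes a reference vertex $r_0 \in V(C_{2k+1})$ and encodes $\mathtt{true}/\mathtt{false}$ by whether a designated vertex is mapped to $r_0$ or to some other fixed vertex; clause gadgets are attached so that the ``all false'' assignment is exactly the one that cannot be extended. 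The key point is that all gadgets have size $\Oh(k) = \Oh(1)$, so the whole construction has $\Oh(n+m)$ vertices given a formula with $n$ variables and $m$ clauses, which yields the ETH-hardness (no $2^{o(n)}$ algorithm) once $3$-\textsc{Sat} hardness under ETH is invoked.

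The real work — and the main obstacle — is the \emph{diameter/radius control}. After building the gadget graph $G_0$, which is connected but may have large diameter, I would add a bounded set of ``hub'' vertices forming a short path or small tree $P = p_1 p_2 \cdots p_{k+1}$ (or a more clever fixed-size structure), and join each vertex of $G_0$ to an appropriate hub vertex so that every vertex is within distance $k+1$ of $p_1$ (or of a chosen center), giving radius $\le k+1$ and hence diameter $\le 2k+2$. The delicacy is that these new hub edges must not destroy the gadget semantics: any added edge $uv$ imposes the constraint $\vphi(u)\vphi(v)\in E(C_{2k+1})$, so I must choose the hub structure and the attachment pattern so that in every intended solution the hub vertices can be colored (say all hubs get a fixed walk $r_0, r_1, \ldots$ around the cycle) while the extra edges to gadget vertices are automatically satisfied — this typically requires that each gadget vertex that gets attached to hub $p_i$ is, in all intended colorings, mapped to a vertex adjacent to $\vphi(p_i)$. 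Concretely one can attach via short paths of carefully chosen length rather than single edges, exploiting the fact (noted in the excerpt's discussion) that in $C_{2k+1}$ with $2k+1 \ge 5$ there is enough ``room'' — common neighbors and private neighbors behave well — to route such connections without collapsing the encoding.

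The steps, in order, would be: (1) fix notation for $C_{2k+1}$, name its vertices $0,1,\ldots,2k$ cyclically, and record the basic distance facts (a path of length $\ell$ between $u$ and $v$ exists in a $C_{2k+1}$-coloring iff $\ell \equiv \dist_{C_{2k+1}}(u,v) \pmod 2$ and $\ell \ge \dist$); (2) build the variable gadget and prove it has exactly the two intended ``states''; (3) build the clause gadget and prove the satisfiability equivalence for $G_0$; (4) add the hub structure and the connecting paths, and prove (a) radius $\le k+1$, hence diameter $\le 2k+2$, and (b) that a $C_{2k+1}$-coloring of the augmented graph $G$ exists iff the formula is satisfiable — the forward direction by extending any satisfying assignment's coloring to the hubs along a fixed cyclic walk, the backward direction by checking the hub edges cannot create new spurious solutions of the gadget part; (5) count vertices to confirm linearity and conclude the ETH lower bound. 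I expect step (4b) to be where essentially all the care goes: showing the hubs are ``neutral'' and don't accidentally let an unsatisfiable formula become colorable, and simultaneously that the forced hub coloring is compatible with \emph{every} gadget state. A clean way to handle this is to make the hubs mutually non-adjacent except along $P$ and to attach each gadget vertex $x$ to a hub via a path whose length has the right parity so that the constraint it imposes on $x$ is already implied by the gadget (e.g. ``$\vphi(x) \ne$ some vertex'' or no constraint at all), which decouples the diameter reduction from the logical encoding.
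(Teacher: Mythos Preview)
Your overall strategy --- a linear reduction from $3$-\textsc{Sat} with constant-size gadgets, then invoking ETH --- matches the paper. The divergence is architectural: you propose a two-phase construction (build a gadget graph $G_0$, then retrofit a separate hub path $p_1\cdots p_{k+1}$ and connect every vertex to it), whereas the paper uses an \emph{integrated} design in which a single reference $(2k{+}1)$-cycle $C=v_0\cdots v_{2k}$ simultaneously serves as the coloring anchor \emph{and} the radius center. Every variable cycle shares $v_0$; every clause cycle is pinned to $v_1,v_2$; the connecting paths $P_{ij}$ are laced along $C$ (the $\ell$-th internal vertex adjacent to $v_\ell$), so each gadget vertex is automatically within distance $k{+}1$ of $v_1$.

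Your plan contains a real unresolved tension, which you yourself flag as step~(4b). To make a hub-to-gadget path impose \emph{no} constraint on a gadget vertex $x$, that path must have length at least $2k$ (only then does a walk from a fixed color reach all of $C_{2k+1}$); to make it impose merely a single forbidden color you still need length $2k{-}1$. But a path that long, hanging off a hub at distance $i{-}1$ from the center, puts its own internal vertices at distance up to $i{+}2k{-}3$ from the center --- far beyond $k{+}1$. Shorter paths (length $\le k$) keep the radius bound but restrict $\varphi(x)$ to at most $k{+}1$ colors, which is a genuine constraint you cannot hand-wave away as ``already implied by the gadget'' without actually designing the gadgets around it. In other words, neutrality of the hubs and radius $k{+}1$ are in direct conflict, and the paper's resolution is precisely to abandon neutrality: the central cycle is not a passive hub but an active part of every gadget, and the attachments to multiple $v_\ell$'s are what simultaneously force the intended colorings and keep distances short. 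Your outline would need to be reworked in this direction to close the gap.
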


We summarize the current state of knowledge about the complexity of $\homo{C_{2k+1}}$ in \cref{tab:complexity}.

\begin{center}
\begin{table}[]
    \centering

\begin{tabular}{ |c|ccccccccccc| } 
  \hline
  $k$ $\setminus$ $\diam$  & 2 & 3 & 4 & 5 & 6 & 7 & 8 & 9 & 10 & 11 & $\geq 12$ \\ 
  \hline
  1 &  \cellcolor{cyan!25}   & \cellcolor{cyan!25}  & \cellcolor{red!25}   & \cellcolor{red!25}  & \cellcolor{red!25}  & \cellcolor{red!25}  & \cellcolor{red!25}  & \cellcolor{red!25}  & \cellcolor{red!25}  & \cellcolor{red!25}  & \cellcolor{red!25} \\ 
  2 &  \cellcolor{green!25}  & \cellcolor{green!25} & \cellcolor{cyan!25}  & \cellcolor{cyan!25}  & \cellcolor{red!25}   & \cellcolor{red!25}  & \cellcolor{red!25}  & \cellcolor{red!25}  & \cellcolor{red!25}  & \cellcolor{red!25}  & \cellcolor{red!25} \\ 
  3 & \cellcolor{green!25}  & \cellcolor{green!25}  & \cellcolor{green!25} & \cellcolor{cyan!25}   & ? & ? &  \cellcolor{red!25}  & \cellcolor{red!25}  & \cellcolor{red!25}  & \cellcolor{red!25}  & \cellcolor{red!25} \\ 
  4  & \cellcolor{green!25}  & \cellcolor{green!25}  & \cellcolor{green!25}  & \cellcolor{green!25}  & \cellcolor{cyan!25}  & ? & ? & ? & \cellcolor{red!25} & \cellcolor{red!25}  & \cellcolor{red!25} \\ 
  $\geq 5$  & \cellcolor{green!25}  & \cellcolor{green!25}  & \cellcolor{green!25}  & \cellcolor{green!25}  & \cellcolor{green!25}  & \cellcolor{cyan!25}   & ? & ? & ? & ? & \cellcolor{red!25} \\ 
  \hline
\end{tabular}
\caption{Complexity of $\homo{C_{2k+1}}$ on bounded-diameter graphs. The color in the cell $(k,d)$ denotes that $\homo{C_{2k+1}}$ on diameter-$d$, respectively, green -- is polynomial-time solvable, blue -- can be solved in subexponential time, and red -- cannot be solved in subexponential time, assuming the ETH.  The first row is based on~\cite{DBLP:journals/algorithmica/MertziosS16,DBLP:journals/siamdm/DebskiPR22}. The rows for $k\geq 2$ are filled due to \cref{thm:poly}, \cref{thm:subexp}, \cref{thm:hardness}, and \cref{thm:c5}.}\label{tab:complexity}
\end{table}
\end{center}

The next direction we study in the paper is the following. Instead of considering larger odd cycles and apropriate diameter, we focus on diameter-$2$ input graphs, but we change the target graph to some arbitrary $H$.
Note that it only makes sense to consider graphs $H$ of diameter-$2$, since the homomorphic image of a diameter-$2$ graph has to induce a diameter-$2$ subgraph.
We provide a polynomial-time algorithm for triangle-free target graphs.
We point out that the class of triangle-free diameter-$2$ graphs is still very rich, for instance, contains all Mycielski graphs.

\begin{restatable}{theorem}{thmtriangle}\label{thm:triangle-free}
    Let $H$ be a triangle-free graph. Then the $\lhomo{H}$ problem can be solved in polynomial time on diameter-$2$ graphs.
\end{restatable}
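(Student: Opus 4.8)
The plan is to reduce the problem, via a polynomial-time preprocessing phase, to an instance of a binary constraint satisfaction problem in which every variable has a domain of size at most $2$, which is then solvable in polynomial time by the standard reduction to $2$-\textsc{Sat} (as in Edwards~\cite{DBLP:journals/tcs/Edwards86}). So the whole game is to show that in a diameter-$2$ graph $G$ with a triangle-free target $H$, we can efficiently shrink every list to size at most $2$, or detect a no-instance.

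First I would set up notation: we have an instance $(G, L)$ of $\lhomo{H}$ with $G$ of diameter $2$ and $H$ triangle-free. Repeatedly apply the obvious arc-consistency (propagation) rule: if $u \in N_G(v)$ and $a \in L(v)$ has no neighbor in $H$ inside $L(u)$, delete $a$ from $L(v)$. This is polynomial and either empties some list (reject) or leaves us consistent lists. Now the key structural observation: fix a vertex $v$ with $|L(v)| \geq 3$, say $a, b, c \in L(v)$ are distinct. Since $H$ is triangle-free and $a,b,c$ lie on... wait — they need not be mutually adjacent. The real leverage is different: take \emph{any} neighbor $u$ of $v$ in $G$. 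Every color that $u$ can take must be adjacent in $H$ to \emph{something} in $L(v)$; but more usefully, consider two vertices $v, w$ with large lists. In a diameter-$2$ graph, $v$ and $w$ are at distance $\leq 2$, so either $vw \in E(G)$ — forcing $L(v)$ and $L(w)$ to be "cross-adjacent" in the triangle-free $H$, which is very restrictive — or they share a common neighbor $x$, and then $\vphi(x)$ must be adjacent in $H$ to both $\vphi(v)$ and $\vphi(w)$; since $H$ is triangle-free, the neighborhood of any vertex of $H$ is an independent set, so $\vphi(v)$ and $\vphi(w)$ lie in a common independent set $N_H(\vphi(x))$. I would exploit this to argue that the set of vertices with list of size $\geq 3$ behaves like an "independent-set-structured" object: pick one such vertex $v_0$ and a color $a_0 \in L(v_0)$; branch on $a_0$ — no wait, branching is not allowed for a polynomial algorithm. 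Instead: show that if $|L(v_0)| \geq 3$ then, using that $G$ has diameter $2$ and $H$ is triangle-free, one of the colors in $L(v_0)$ can be safely removed (it is dominated), or the instance decomposes.

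The cleanest route, and the one I would pursue in detail: show that after arc-consistency, for every vertex $v$ with $|L(v)| \geq 3$, all vertices of $G$ are "close" to $v$ in a way that pins down $\vphi$ globally. Concretely, if some $v$ has $|L(v)| \geq 3$, I claim $G$ must map into $N_H[\text{(a single vertex)}]$ or into a short cycle, because the common-neighbor structure forces every color class to be an independent set of $H$ containing at least three pairwise "linked" colors, and in a triangle-free graph the only such configurations come from bipartite-like pieces or from $C_5$-type obstructions. Handling this case amounts to: either $G$ itself turns out to be bipartite (solve in polynomial time), or we can fix the image to lie inside $N_H(x) \cup \{x\}$ for some $x$, where $N_H(x)$ is independent, reducing to a list-bipartization-type problem. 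For every vertex $v$ with $|L(v)| \leq 2$ we keep it; for those with $|L(v)| \geq 3$ we show the above collapses the possibilities to a polynomially checkable family. Finally, encode the surviving instance (all domains of size $\leq 2$) as $2$-\textsc{Sat} and solve.

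The main obstacle I anticipate is exactly the case analysis around vertices whose list does not shrink below $3$: one has to rule out, using only diameter $2$ and triangle-freeness, that such lists proliferate in an uncontrolled way, and to show that whenever they persist the instance has enough rigidity (via the independent neighborhoods $N_H(\cdot)$ in $H$ and the ubiquity of common neighbors in $G$) to be decided in polynomial time. I would expect this to split into a small number of subcases according to whether $G$ contains a triangle, whether $G$ is bipartite, and the local structure of $H$ around the relevant colors; the triangle-free-ness of $H$ is used each time to say "this set of colors is independent in $H$", which is the engine that keeps the number of live options bounded.
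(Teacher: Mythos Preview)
Your proposal has a genuine gap: the entire strategy of shrinking every list to size at most $2$ by propagation and structural arguments alone, \emph{without any guessing}, cannot work. Take $G=H$ to be the Petersen graph (or even $G=H=C_5$): both have diameter $2$ and are triangle-free, and with full lists arc-consistency removes nothing, no colour is dominated by another, and the instance does not ``decompose'' in any of the ways you describe. Your claims that ``$G$ must map into $N_H[\text{a single vertex}]$ or into a short cycle'' and that ``one of the colours can be safely removed (it is dominated)'' are simply false in such examples. The subsequent case split on whether $G$ is bipartite or contains a triangle does not lead to an algorithm either.

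The idea you explicitly discard is exactly the one the paper uses. You write ``branch on $a_0$ --- no wait, branching is not allowed for a polynomial algorithm,'' but this is where you go wrong: $H$ is \emph{fixed}, so one can guess, in $n^{O(|H|)}$ time, an induced diameter-$2$ subgraph $H'\subseteq H$ that will serve as the image together with one representative vertex $v_i\in V(G)$ for each colour $x_i\in V(H')$. After adding the edges $v_iv_j$ for $x_ix_j\in E(H')$ (so that the representatives induce a copy of $H'$) and exhaustively applying the reduction rules (arc-consistency (R3), the domination rule (R5), and identification of equally precoloured vertices (R6)), the paper shows that any surviving vertex $v\notin\{v_1,\dots,v_{h'}\}$ leads to a contradiction: using diameter~$2$ one argues that $v$ is adjacent to \emph{all} representatives of $N_{H}(x_i)$ (where $x_i=\vphi(v)$), because a missing adjacency would force a common neighbour whose colour would create a triangle in $H$; then (R3) forces every colour in $L'(v)$ to contain $N_H(x_i)$ in its neighbourhood, (R5) forces $L'(v)=\{x_i\}$, and (R6) identifies $v$ with $v_i$. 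Thus a reduced yes-instance collapses to the copy of $H'$ itself. The triangle-freeness of $H$ is used in one clean place (to rule out the common neighbour), not through the diffuse ``independent neighbourhood'' heuristics you sketch.
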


Finally, we point out that all upper bounds, i.e., \cref{thm:triangle-free}, \cref{thm:poly}, and \cref{thm:subexp} hold in the more general list setting, while the lower bound from \cref{thm:hardness} holds already in the non-list setting.
\section{Preliminaries}
For a vertex $v$, by $N_G(v)$ we denote the neighborhood of $v$ in $G$. 
By $\dist_G(u,v)$ we denote the length (the number of edges) of a shortest $u$-$v$ path in $G$.
For a positive integer $d$, by $N^{\leq d}_G(v)$ we denote the set of all vertices $u\in V(G)$ such that $\dist_G(u,v)\leq d$.
If $G$ is clear from the contex, we omit the subscript $G$ and simply write $N(v)$, $N^{\leq d}(v)$, and $\dist(u,v)$.
The \emph{diameter} of $G$, denoted by $\diam(G)$, is the maximum $\dist(u,v)$ over all pairs of vertices $u,v\in V(G)$. We say that $G$ is a \emph{diameter-$d$} graph if $\diam(G)\leq d$. 
The \emph{radius} of $G$ is the minimum integer $r$ such that there is a vertex $z\in V(G)$ such that for every $v\in V(G)$, it holds that $\dist(v,z)\leq r$. Note that we always have $\mathrm{radius}(G)\leq \diam(G)\leq 2\cdot \mathrm{radius}(G)$.
By $[n]$ we denote the set $\{1,2,\ldots,n\}$ and by $[n]_0$ we denote $\{0,1,\ldots,n\}$. Throughout this paper all graphs we consider are simple, i.e., no loops, no multiple edges.
By $C_n$ and $P_n$ we denote, respectively, the cycle on $n$ vertices and the path on $n$ vertices.

\paragraph{Homomorphisms.} For graphs $G,H$, a homomorphism from $G$ to $H$ is an edge-preserving mapping $\vphi: V(G)\to V(H)$, i.e., for every $uv\in E(G)$, it holds $\vphi(u)\vphi(v)\in E(H)$. For fixed $H$, called \emph{target}, in the homomorphism problem, denoted by $\homo{H}$, we are given a graph $G$, and we have to determine whether there exists a homomorphism from $G$ to $H$. In the list homomorphism problem, denoted by $\lhomo{H}$, $G$ is given along with lists $L: V(G)\to 2^{V(H)}$, and we have to determine if there is a homomorphism $\vphi$ from $G$ to $H$ which additionally respects lists, i.e., for every $v\in V(G)$ it holds $\vphi(v)\in L(v)$. We will write $\vphi: G \to H$ (resp. $\vphi: (G,L) \to H$) if $\vphi$ is a (list) homomorphism from $G$ to $H$, and $G\to H$ (resp. $(G,L)\to H$) to indicate that such a (list) homomorphism exists. Since the graph homomorphism problem generalizes graph coloring we will often refer to homomorphism as coloring and to vertices of $H$ as colors.

\paragraph{Cycles.} Whenever $C_{2k+1}$ is the target graph, we will denote its vertex set by $[2k]_0$, unless stated explicitly otherwise. Moreover, whenever we refer to the vertices of the $(2k+1)$-cycle, i.e., cycle on $2k+1$ vertices, by $+$ and $-$ we denote respectively the addition and the subtraction modulo $2k+1$.

\paragraph{Lists.} For an instance $(G,L)$ of $\lhomo{C_{2k+1}}$, by $V_i$ we denote the set of vertices $v$ of $G$ such that $|L(v)|=i$. Sometimes we will refer to vertices of $V_1$ as \emph{precolored vertices}. We also define $V_{\geq i}=\bigcup_{j\geq i} V_j$.
We say that a list $L(v)$ is \emph{of type $(\ell_1,\ldots,\ell_r)$} if $|L(v)|=r+1$ and its vertices can be ordered $c_0,\ldots,c_r$ so that for every $i\in[r-1]_0$, we have that $c_{i+1}=c_i+\ell_i$. For example, for $k\geq 4$, one of the types of the list $\{1,4,6,7\}$ is $(3,2,1)$.

\paragraph{Binary CSP and $2$-\textsc{Sat}.} For a given set (domain) $D$, in the \textsc{Binary Constraint Satisfaction} problem (\textsc{BCSP}) we are given a set $V$ of variables, list function $L: V\to 2^D$, and constraint function $C: V\times V \to 2^{D\times D}$.
The task is to determine whether there exists an assignment $f: V\to D$ such that for every $v\in V$, we have $f(v)\in L(v)$ and for every pair $(u,v)\in V\times V$, we have $(f(u),f(v))\in C(u,v)$. Clearly, any instance of $\lhomo{H}$ can be seen as an instance of $\textsc{BCSP}$, where the domain $D$ is $V(H)$, list function remains the same, and for every edge $uv\in E(G)$ we set $C(u,v)=\{(x,y) \ | \ xy\in E(H)\}$ and for every $uv\notin E(G)$ we set $C(u,v)=V(H)\times V(H)$. We will denote by $\BCSP(H,G,L)$ the instance of $\textsc{BCSP}$ corresponding to the instance $(G,L)$ of $\lhomo{H}$.
Standard approach of Edwards~\cite{DBLP:journals/tcs/Edwards86} with a reduction to 2-\textsc{Sat} implies that in polynomial time we can solve an instance of $\textsc{BCSP}$ with all list of size at most two. 

\begin{theorem}[Edwards~\cite{DBLP:journals/tcs/Edwards86}]\label{lem:red-2-sat}
    Let $(V,L,C)$ be an instance of $\textsc{BCSP}$ over the domain $D$. Assume that for every $v\in V$ it holds $|L(v)|\leq 2$. Then we can solve the instance $(V,L,C)$ in polynomial time.
\end{theorem}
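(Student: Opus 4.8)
The plan is to reduce the instance $(V,L,C)$ to an equivalent instance of $2$-\textsc{Sat}, which by the classical linear-time algorithm can be decided in polynomial time. First I would preprocess the degenerate list sizes: if some $L(v)=\emptyset$, report that no valid assignment exists; if $|L(v)|=1$, say $L(v)=\{a\}$, substitute $f(v)=a$ into every constraint involving $v$ and remove $v$ (or, equivalently, keep a Boolean variable forced to a fixed value by a unit clause). We may also assume that each diagonal constraint $C(v,v)$ has been folded into the list by restricting $L(v)$ to $\{a \in L(v) : (a,a)\in C(v,v)\}$. After this cleanup we have $|L(v)|=2$ for every $v\in V$; fix an ordering and write $L(v)=\{a_v,b_v\}$.

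For each $v\in V$ introduce a Boolean variable $x_v$, with the intended meaning that $x_v$ true encodes $f(v)=a_v$ and $x_v$ false encodes $f(v)=b_v$; thus truth assignments to $(x_v)_{v\in V}$ are in bijection with list-respecting assignments $f\colon V\to D$. Now for each ordered pair $(u,v)\in V\times V$ with $u\neq v$, consider the relation $C(u,v)$ restricted to $L(u)\times L(v)$. This restricted relation lives in a set of size at most four, so its complement inside $L(u)\times L(v)$ — the set of \emph{forbidden} pairs for $(u,v)$ — also has size at most four. For each forbidden pair $(p,q)$ with $p\in L(u)$, $q\in L(v)$, add the unique $2$-clause over $\{x_u,x_v\}$ falsified exactly when $f(u)=p$ and $f(v)=q$: for instance if $(p,q)=(a_u,b_v)$ add $(\neg x_u \vee x_v)$, and analogously for the three other cases. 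Let $\Phi$ be the conjunction of all these clauses over all ordered pairs. Since there are $|V|^2$ pairs and each contributes at most four clauses, $\Phi$ has $\Oh(|V|^2)$ clauses and is built in polynomial time.

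By construction, a truth assignment satisfies $\Phi$ if and only if the corresponding $f$ avoids every forbidden pair, i.e.\ $(f(u),f(v))\in C(u,v)$ for all $(u,v)\in V\times V$, and $f$ respects the lists automatically by the encoding; hence $(V,L,C)$ is satisfiable if and only if $\Phi$ is satisfiable. Running the linear-time $2$-\textsc{Sat} algorithm on $\Phi$ therefore decides the instance, and reading off a satisfying assignment (when one exists) yields an explicit solution $f$. There is essentially no ``hard part'' here: the only points needing care are the degenerate list sizes $0$ and $1$ and the diagonal pairs $(v,v)$, all dispatched by the preprocessing; the rest is a direct, constraint-by-constraint translation, so the whole procedure runs in polynomial time, as claimed.
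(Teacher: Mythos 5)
Your reduction is correct and is exactly the standard Edwards-style translation to $2$-\textsc{Sat} that the paper invokes (the paper gives no proof of its own, only the citation): one Boolean variable per domain variable with a two-element list, one $2$-clause per forbidden pair in $L(u)\times L(v)$, plus routine handling of lists of size $0$ or $1$. Nothing further is needed.
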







We will also use the following result of Feder, Hell, and Huang~\cite{DBLP:journals/combinatorica/FederHH99}.

\begin{lemma}[~\cite{DBLP:journals/combinatorica/FederHH99}]\label{lem:paths-poly}
Let $t$ be a positive integer. Then every instance of $\lhomo{P_t}$ can be solved in polynomial time.
\end{lemma}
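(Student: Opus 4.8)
The plan is to give a direct, self-contained polynomial algorithm based on arc consistency, using that $P_t$ is a bipartite graph equipped with a \emph{min ordering} -- namely the natural linear order $1 < 2 < \dots < t$ on its vertex set -- which is precisely the property that makes local (arc) consistency sufficient for $\lhomo{P_t}$; conceptually this is the point underlying the cited general theorem of Feder, Hell, and Huang, but for a path it can be extracted by hand.

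First I would reduce to the case that $G$ is connected (treat the components independently) and has at least one edge (an edgeless component is trivial: just check that no list is empty). If $G$ is not bipartite, reject, since $P_t$ is bipartite. Otherwise fix the bipartition $(A,B)$ of $G$, which is unique up to swapping the sides. The bipartition classes of $P_t$ on vertex set $\{1,\dots,t\}$ are its odd and its even vertices, so every list homomorphism $G\to P_t$ sends $A$ into the odds and $B$ into the evens, or vice versa. I would try both options; in each, intersect every list with the appropriate parity class to obtain a \emph{parity-normalized} instance $(G,L')$, decide each such instance, and combine the partial assignments (reporting infeasibility if both options fail for some component).

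For a parity-normalized instance, run the standard arc-consistency procedure: while there is an edge $uv$ of $G$ and a color $c\in L'(u)$ with no $P_t$-neighbor inside $L'(v)$, delete $c$ from $L'(u)$; iterate to a fixpoint, reached after at most $|V(G)|\cdot t$ deletions. If some list becomes empty, this parity option has no solution -- this direction is routine, since if $\psi$ is a list homomorphism respecting the parity-normalized lists, then by induction on the deletions no value of $\psi$ is ever deleted. The heart of the argument is the converse. \textbf{Claim:} if arc consistency terminates with all lists nonempty, then $\vphi(v):=\min L'(v)$ is a list homomorphism. To prove it, take $uv\in E(G)$ and set $a:=\vphi(v)=\min L'(v)$, $b:=\vphi(u)=\min L'(u)$. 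By arc consistency $a$ has a $P_t$-neighbor $b^*\in L'(u)$, so $b\le b^*$ and $b^*\in\{a-1,a+1\}$; symmetrically $b$ has a $P_t$-neighbor $a^*\in L'(v)$, so $a\le a^*$ and $b\in\{a^*-1,a^*+1\}$. If $a=a^*$, then $b\in\{a-1,a+1\}$, so $\vphi(u)\vphi(v)\in E(P_t)$. If $a<a^*$, then since $a$ and $a^*$ lie in the same parity class we get $a^*\ge a+2$, hence $b\ge a^*-1\ge a+1$; combined with $b\le b^*\le a+1$ this forces $b=a+1$, and again $\vphi(u)\vphi(v)\in E(P_t)$. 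This proves the claim.

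Putting the pieces together: for each connected component and each of its two parity options, run arc consistency and, if it survives, output the pointwise-minimum coloring; $G$ is a yes-instance iff every component has a surviving option. Since arc consistency and the minimum-assignment computation run in time polynomial in $|V(G)|$ and $t$, and $t$ is a fixed constant, the whole procedure is polynomial. The only mildly delicate point I anticipate is the correctness of the minimum-assignment claim -- in particular noticing that the parity normalization is exactly what excludes the spurious possibility $b=a$ (a non-edge of $P_t$); the rest is bookkeeping.
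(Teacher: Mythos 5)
Your proof is correct. Note that the paper does not prove this lemma at all: it simply cites the general result of Feder, Hell, and Huang, whose theorem characterizes exactly which targets $H$ admit a polynomial-time \lhomo{H} algorithm and covers paths as a very special case. What you have done is extract, by hand, the mechanism that makes the general theorem work for $P_t$: the natural linear order on $V(P_t)$ is a min ordering, and for a target with a min ordering, arc consistency plus the pointwise-minimum assignment is a complete algorithm. Your case analysis for the key claim is sound -- in particular, you correctly identify that the parity normalization (possible because $G$ must be bipartite and a connected bipartite graph maps its two sides into the two sides of $P_t$) is what rules out the degenerate outcome $b=a$, which would otherwise be consistent with the inequalities $a\le b\le a+1$ and would not be an edge of $P_t$. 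The trade-off is the usual one: the citation buys generality (the same lemma would hold for any bi-arc target), while your argument buys a short, self-contained, and verifiable proof for the one target the paper actually needs.
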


\section{Reduction rules and basic observations}
In this section we define reduction rules and show some basic observations.

\paragraph{Reduction rules.} Let $H$ be a graph and let $(G,L)$ be an instance of $\lhomo{H}$. We define the following reduction rules.
\begin{enumerate}[(R1)]
\item If $H=C_{2k+1}$ and $G$ contains an odd cycle of length at most $2k-1$, then return NO.\label{red:cycle-short}
\item If $H=C_{2k+1}$ and in $G$ there are two $(2k+1)$-cycles with consecutive vertices respectively $c_0,\ldots,c_{2k}$ and $c'_0,\ldots,c'_{2k}$ and such that $c_0=c_0'$ and $c_i=c_j'$ for some $i,j\neq 0$, then a) if $i=j$, then identify $c_\ell$ with $c'_\ell$ for every $\ell\in [2k]$, b) if $i=-j$, then contract $c_\ell$ with $c'_{(-\ell)}$ for every $\ell\in [2k]$, c) otherwise return NO.\label{red:two-cycles}
\item For every edge $uv$, if there is $x\in L(u)$ such that $N_H(x)\cap L(v)=\emptyset$, then remove $x$ from $L(u)$.\label{red:edges}
\item If there is $v\in V(G)$ such that $L(v)=\emptyset$, then return NO.\label{red:empty}
\item For every $v\in V(G)$, if there are $x,y\in L(v)$ such that for every $u\in N_G(v)$, we have $N_H(x)\cap L(u)\subseteq N_H(y)\cap L(u)$, then remove $x$ from $L(v)$.\label{red:comp}
\item For a vertex $x\in V(H)$, and vertices $u,v\in V(G)$ such that $L(u)=L(v)=\{x\}$, if $uv\in E(G)$, then return NO, otherwise identify $u$ with $v$.\label{red:identify}

\end{enumerate}

Clearly, each of the above reduction rules can be applied in polynomial time. Let us verify that the reduction rules are safe.

\begin{lemma}\label{lem:reduction-rules}
    After applying each reduction rule to an instance $(G,L)$ of~$\lhomo{H}$, we obtain an equivalent instance.
\end{lemma}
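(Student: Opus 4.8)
The plan is to verify each reduction rule (R1)--(R6) separately, in each case showing that the instance before applying the rule admits a list homomorphism to $H$ if and only if the instance after applying it does. Since each rule only shrinks lists or modifies $G$ in a controlled way, the ``only if'' direction is typically the easy one (a homomorphism of the original instance restricts/descends to one of the reduced instance), and the work is in the ``if'' direction.

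First I would handle the list-shrinking rules (R3), (R4), (R5). Rule (R4) is immediate: an empty list forbids any homomorphism. For (R3), if $\vphi$ is a list homomorphism and $\vphi(u)=x$ with $N_H(x)\cap L(v)=\emptyset$, then $\vphi(v)\in L(v)$ and $\vphi(u)\vphi(v)\in E(H)$ force $\vphi(v)\in N_H(x)\cap L(v)=\emptyset$, a contradiction; so no homomorphism uses $x$ on $u$, and removing it changes nothing. Rule (R5) is the ``dominated color'' rule: if $x,y\in L(v)$ and $N_H(x)\cap L(u)\subseteq N_H(y)\cap L(u)$ for every neighbor $u$ of $v$, then given any list homomorphism $\vphi$ with $\vphi(v)=x$, the map $\vphi'$ agreeing with $\vphi$ everywhere except $\vphi'(v)=y$ is still a list homomorphism: for each edge $vu$ we have $\vphi(u)\in N_H(x)\cap L(u)\subseteq N_H(y)\cap L(u)=N_H(y)\cap L(u)$, so $yu$-edges are preserved; all other edges are untouched. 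Hence deleting $x$ from $L(v)$ is safe.

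Next I would treat the identification/contraction rules (R6), (R2), and the short-cycle rule (R1), for the target $H=C_{2k+1}$. For (R6): if $L(u)=L(v)=\{x\}$ and $uv\in E(G)$, then any homomorphism would need $xx\in E(H)$, impossible since $C_{2k+1}$ is loopless, so NO is correct; if $uv\notin E(G)$, then any list homomorphism sends both $u,v$ to $x$, so identifying them yields an equivalent instance (a homomorphism of the identified graph pulls back, and conversely). Rule (R1) uses the basic fact that $C_{2k+1}$ has odd girth exactly $2k+1$: a homomorphism $G\to C_{2k+1}$ maps any closed walk to a closed walk, and an odd cycle of length $\le 2k-1$ in $G$ would map to a closed odd walk of length $\le 2k-1$ in $C_{2k+1}$, which is impossible (a closed walk in a graph with odd girth $g$ has length $0$ or $\ge g$ if odd). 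So returning NO is correct. Rule (R2) is the most delicate: it concerns two $(2k+1)$-cycles in $G$ sharing the vertex $c_0=c_0'$ and one more vertex $c_i=c_j'$. The key point is that in any homomorphism $\vphi$ to $C_{2k+1}$, a $(2k+1)$-cycle of $G$ must map bijectively onto $V(C_{2k+1})$ traversing it monotonically (clockwise or counterclockwise), since the only closed walks of length $2k+1$ in $C_{2k+1}$ that are genuine cycles wind exactly once around; consequently $\vphi(c_\ell)=\vphi(c_0)\pm\ell$ and $\vphi(c'_\ell)=\vphi(c'_0)\pm\ell$. Combining $\vphi(c_0)=\vphi(c_0')$ with $\vphi(c_i)=\vphi(c_j')$ one derives $i\equiv \pm j \pmod{2k+1}$ according to the relative orientations, which forces the identifications in cases (a) $i=j$ (same orientation) and (b) $i=-j$ (opposite orientation) to be safe, since any homomorphism already agrees accordingly; and if $i\notin\{j,-j\}$ no homomorphism can exist, so NO is correct. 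Here one has to be careful to argue that the two orientations are the only options and to check that after an identification one does not create a loop or a contradiction beyond what (R1) or a later reduction would catch — but for the statement of safety it suffices that pre- and post-reduction instances have the same set of list homomorphisms (up to the identification).

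The main obstacle I anticipate is rule (R2): proving rigorously that every $(2k+1)$-cycle in $G$ maps to $C_{2k+1}$ ``monotonically'' (i.e.\ that $\vphi$ restricted to the cycle is an automorphism of $C_{2k+1}$, hence a rotation or reflection), and then doing the modular arithmetic to see exactly which pairs $(i,j)$ are feasible and that the prescribed identifications/contractions are forced. The other rules are short. I would close by noting that each rule clearly runs in polynomial time, which was already asserted, so the lemma follows by applying the per-rule equivalence.
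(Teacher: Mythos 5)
Your proposal is correct and follows essentially the same route as the paper's proof: the list-shrinking and identification rules are handled by the same standard arguments, and for (R1) and (R2) you use exactly the paper's key facts that odd cycles cannot map to longer odd cycles and that a $(2k+1)$-cycle must map onto $C_{2k+1}$ monotonically (up to orientation), yielding $i=\pm j$ and the prescribed identifications. The extra care you flag for (R2) is warranted but resolves exactly as you describe.
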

\begin{proof}
    First, any odd cycle cannot be mapped to a larger odd cycle, so the reduction rule (R1) is safe. Furthermore, for any $(2k+1)$-cycle, in any list homomorphism to $C_{2k+1}$, its consecutive vertices have to be mapped to consecutive vertices of $C_{2k+1}$. Let $c_0,\ldots,c_{2k}$ and $c'_0,\ldots,c'_{2k}$ be the vertices of two $(2k+1)$-cycles such that $c_0=c_0'$ and $c_i=c_j'$ for some $i,j\neq 0$.  Suppose we are dealing with a yes-instance and let $\vphi: (G,L)\to C_{2k+1}$. Without loss of generality assume that $\vphi(c_0)=\vphi(c_0')=0$ and $\vphi(c_i)=\vphi(c_j')=i$. Then $\vphi(c_s)=s$ for every $s\in [2k]_0$. Moreover, either $j=i$ or $j=-i$, and $\vphi(c_s')=s$ for every $s\in [2k]_0$ in the first case, or $\vphi(c_s')=-s$ for every $s\in [2k]_0$ in the second case. Therefore, the reduction rule (R2) is safe. 

    Let $uv\in E(G)$ be such that there is $x\in L(u)$ such that $N(x)\cap L(v)=\emptyset$. Suppose that there is a list homomorphism $\vphi: (G,L)\to C_{2k+1}$ such that $\vphi(u)=x$. Then $v$ must be mapped to a vertex from $N(x)\cap L(v)=\emptyset$, a contradiction. Thus we can safely remove $x$ from $L(u)$, and (R3) is safe. Clearly, if any list of a vertex is an empty set, then we are dealing with a no-instance and thus (R4) is safe. Finally, assume there is $v\in V(G)$ and $x,y\in L(v)$ such that for every $u\in N_G(v)$, it holds $N_H(x)\cap L(u)\subseteq N_H(y)\cap L(u)$, and suppose there is a list homomorphism $\vphi: (G,L)\to H$ such that $\vphi(v)=x$. Then $\vphi'$ defined so that $\vphi'(v)=y$ and $\vphi'(w)=\vphi(w)$ for $w\in V(G)\setminus \{v\}$ is also a list homomorphism $(G,L)\to H$. Therefore, (R5) is safe. If two vertices have the same one-element list, then they must be mapped to the same vertex. Since we only consider loopless graphs $H$, adjacent vertices of $G$ cannot be mapped to the same vertex. Therefore, if two vertices $u,v$ of $G$ have lists $L(v)=L(u)=\{x\}$ for some $x\in V(H)$, then if $uv\in E(G)$ we are dealing with a no-instance. Otherwise, we can identify $u$ and $v$ and thus (R6) is safe.
\end{proof}

We call an instance $(G,L)$ \emph{reduced} if none of the reduction rules (R1)--(R6) can be applied to $(G,L)$.

In the following observation we describe the lists of vertices that are at some small distance of a precolored vertex.

\begin{lemma}\label{obs:lists-dist}
    Let $k\geq 1$ and let $(G,L)$ be a reduced instance of $\lhomo{C_{2k+1}}$. Let $u\in V(G)$ be such that $L(u)=\{i\}$ and let $v\in V(G)$ be such that $\dist(u,v)=d$. Then
    \begin{enumerate}[a)]
        \item $L(v)\subseteq \{i-d,i-d+2,\ldots,i-2,i,i+2,\ldots,i+d-2,i+d\}$ if $d$ is even,
        \item $L(v)\subseteq \{i-d,i-d+2,\ldots,i-1,i+1,\ldots,i+d-2,i+d\}$ if $d$ is odd.
    \end{enumerate}
In particular, if $d\leq k-1$, then $L(v)$ is an independent set.
\end{lemma}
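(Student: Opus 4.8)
The plan is to prove both the list containment (parts a, b) and the independence corollary by induction on $d$, using reduction rule (R3) as the engine that propagates the constraint from $u$ outward.

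\textbf{Base case and inductive step.} For $d=0$ we have $v=u$, $L(v)=\{i\}$, which matches the claimed set $\{i\}$ in case a). Now suppose the claim holds for all vertices at distance $d-1$ from $u$, and let $v$ have $\dist(u,v)=d$. Pick a neighbor $w$ of $v$ on a shortest $u$-$v$ path, so $\dist(u,w)=d-1$. By the inductive hypothesis, $L(w)$ is contained in the set described by case a) or b) for $d-1$ (whichever parity applies). Since $(G,L)$ is reduced, rule (R3) cannot be applied to the edge $wv$, so every color $c\in L(v)$ has a neighbor in $C_{2k+1}$ lying in $L(w)$; that is, $L(v)\subseteq N_{C_{2k+1}}(L(w)) = \bigcup_{c'\in L(w)}\{c'-1,c'+1\}$. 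Taking the inductive description of $L(w)$ and expanding each element by $\pm 1$ yields exactly the set claimed for $L(v)$ at distance $d$ — the parity of the offsets flips, and the extreme offsets grow from $\pm(d-1)$ to $\pm d$. This is a routine computation modulo $2k+1$; the only point requiring a word of care is that when $d-1 < k$ the arithmetic-progression description involves no wraparound, so the union of $\pm 1$-neighborhoods is precisely the next progression, with no collapsing of distinct residues.

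\textbf{The independence corollary.} Suppose $d\leq k-1$. Then the set $S_d$ on the right-hand side of a) or b) consists of residues among $\{i-d,\dots,i+d\}$ of a single parity (all $\equiv i-d \pmod 2$ after reduction, equivalently all congruent to each other mod $2$ among the representatives). All of these representatives lie in an interval of length $2d \leq 2k-2 < 2k+1$, so they are genuinely distinct residues mod $2k+1$ and no two of them are cyclically consecutive (consecutive residues in $C_{2k+1}$ differ by $1$, but any two elements of $S_d$ differ by at least $2$ as integers, and since the whole block has length $< 2k+1$ they also differ by at least $2$ cyclically). Hence no two vertices of $S_d$ are adjacent in $C_{2k+1}$, so $S_d$ is an independent set, and therefore so is its subset $L(v)$.

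\textbf{Main obstacle.} The only real subtlety is the wraparound in $C_{2k+1}$: the clean "arithmetic progression of one parity" picture is valid only while $d-1<k$, and one must check that the inductive step from $d-1$ to $d$ does not secretly require $d\le k-1$ — it does not, because (R3) only ever gives the containment $L(v)\subseteq N_{C_{2k+1}}(L(w))$, which is a valid set inclusion regardless of how the residues wrap; the descriptions in a) and b) are written as multisets of representatives precisely so that they remain correct statements even once residues start to coincide for larger $d$. So I would state the induction carefully in terms of these representative sets, note that for $d\le 2k$ there is no ambiguity and for $d\le k-1$ the stronger independence conclusion follows as above, and leave the elementary modular bookkeeping to the reader.
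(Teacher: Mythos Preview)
Your proof is correct and follows essentially the same approach as the paper: induction along a shortest path, using that rule (R3) forces $L(v)\subseteq N_{C_{2k+1}}(L(w))$ for the predecessor $w$ at distance $d-1$. You additionally spell out the independence corollary and the wraparound issue, which the paper leaves implicit, but the core argument is the same.
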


\begin{proof}
 Let $P$ be a shortest $u$-$v$ path such that the consecutive vertices of $P$ are $u=p_0,p_1,\ldots,p_d=v$. We have $L(p_0)=\{i\}$. Since the reduction rule \ref{red:edges} cannot be applied for $p_0p_1$, we must have $L(p_1)\subseteq \{i-1,i+1\}$. Suppose now that there is $j\in [d]$ such that:
\[
L(p_{j})\subseteq \{i-j,i-j+2,\ldots,i-2,i,i+2,\ldots,i+j-2,i+j\},
\]

if $j$ is even, and 
\[
L(p_{j})\subseteq \{i-j,i-j+2,\ldots,i-1,i+1,\ldots,i+j-2,i+j\},
\]
if $j$ is odd.

Then for $p_{j+1}$, which is adjacent to $p_{j}$ and thus each vertex of $L(p_{j+1})$ must be adjacent to some vertex of $L(p_{j})$, we have
\[
L(p_{j+1})\subseteq \{i-j-1,i-j+1,\ldots,i-1,i+1,\ldots,i+j-1,i+j+1\},
\]
if $j$ is even, and
\[
L(p_{j+1})\subseteq \{i-j-1,i-j+1,\ldots,i-2,i,i+2,\ldots,i+j-1,i+j+1\},
\]
if $j$ is odd. By the principle of induction the theorem follows.
\end{proof}

The next observation immediately follows from \cref{obs:lists-dist}.

\begin{lemma}\label{obs:lists-dist-two}
      Let $k\geq 1$ and let $(G,L)$ be a reduced instance of $\lhomo{C_{2k+1}}$. Let $u,w\in V(G)$ be such that $L(u)=\{i\}$ and $L(w)=\{i+1\}$. Let $v\in V(G)$ be such that $\dist(u,v)=\dist(w,v)=k+\ell$ for some $\ell\geq 0$. Then $L(v)\subseteq \{i+k-\ell+1,i+k-\ell+2,\ldots,i+k+\ell+1\}$.
\end{lemma}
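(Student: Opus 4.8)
The plan is to apply \cref{obs:lists-dist} to both precolored vertices $u$ and $w$ separately and then intersect the resulting bounds on $L(v)$. First I would write $d = k+\ell$ for the common distance. Since $L(u) = \{i\}$, \cref{obs:lists-dist} gives that $L(v)$ is contained in the set of colors reachable from $i$ by a walk of length of the same parity as $d$, and of length at most $d$; concretely this is the ``interval'' $\{i-d, i-d+2, \dots, i+d\}$ if $d$ is even, and $\{i-d, i-d+2, \dots, i+d\}$ (shifted by the parity) if $d$ is odd — in either case, the attainable residues modulo $2k+1$ that lie ``within distance $d$ of $i$ with the correct parity.'' Doing the same with $w$, where $L(w) = \{i+1\}$, gives that $L(v)$ lies in the analogous interval around $i+1$, but with the opposite parity relationship (since $\dist(w,v) = d$ and $w$ is colored $i+1$, which differs in parity from $i$).

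The key step is then to intersect these two arithmetic-progression-like sets inside $\mathbb{Z}_{2k+1}$ and check that the intersection is exactly $\{i+k-\ell+1, \dots, i+k+\ell+1\}$, a set of $2\ell+1$ consecutive residues. The intuition is that $v$ is forced to be ``colored antipodally'' to both $i$ and $i+1$: the set of colors at distance exactly $k$ from $i$ in $C_{2k+1}$ is a single vertex (the antipode), namely $i+k+1$ (or $i-k$, which is the same vertex mod $2k+1$ up to the $\pm$ convention), and similarly the antipode of $i+1$ is $i+k+1$ as well — wait, more carefully: in $C_{2k+1}$ vertex $i$ has two vertices at distance $k$, namely $i+k$ and $i-k = i+k+1$. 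Since $u$ and $w$ are adjacent-colored and $v$ is equidistant from both, the only consistent choice pushes $v$ toward the color $i+k+1$ when $\ell = 0$, and allows a spread of $\pm \ell$ around it as $\ell$ grows. So the heart of the argument is a careful bookkeeping of which residues survive in both progressions once we account for the parity shift between ``distance-$d$-from-$i$'' and ``distance-$d$-from-$(i+1)$''.

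In more detail: from \cref{obs:lists-dist} applied to $u$, every color in $L(v)$ has the form $i + s$ with $|s| \le d$ and $s \equiv d \pmod 2$; from \cref{obs:lists-dist} applied to $w$, every color in $L(v)$ has the form $(i+1) + t$ with $|t| \le d$ and $t \equiv d \pmod 2$. Setting these equal modulo $2k+1$ gives $s = t+1 \pmod{2k+1}$; since $s$ and $t$ have the same parity, the only way $s = t+1$ can hold as integers is impossible, so the congruence must ``wrap around,'' i.e. $s - t - 1 = \pm(2k+1)$. Taking $s - t = 2k+2$ (the other sign is symmetric), combined with $-d \le s \le d$ and $-d \le t \le d$ and $d = k+\ell$, forces $s \ge 2k+2 - d = k - \ell + 1$ and $s \le d = k+\ell$; that is, $s \in \{k-\ell+1, \dots, k+\ell\}$ — wait, that is only $2\ell$ values, but the claimed set has $2\ell+1$. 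I would need to redo this allowing both wrap directions: $s - t \in \{2k+2, -(2k)\}$ (being careful that $s-t-1 \equiv 0$, so $s - t \equiv 1 \pmod{2k+1}$, giving $s-t \in \{1, -2k, 2k+2, \dots\}$ but with $|s-t| \le 2d = 2k+2\ell \le$ … for small $\ell$ only the values $-2k$ and $2k+2$ are in range, assuming $2\ell < 2k+2$, i.e. $\ell \le k$, which holds since $d \le 2k$). Collecting the residues $i + s$ for $s$ in the union of the two feasible ranges and simplifying modulo $2k+1$ should yield exactly $\{i+k-\ell+1, \dots, i+k+\ell+1\}$.

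I expect the main obstacle to be purely the modular arithmetic bookkeeping — tracking the $\pm$ convention for $C_{2k+1}$ consistently, handling both wrap-around signs, and confirming the endpoint count is $2\ell+1$ rather than off by one. There is no conceptual difficulty beyond \cref{obs:lists-dist}; the whole statement is just ``antipodality is rigid in an odd cycle, and two adjacent-colored equidistant anchors pin $v$ down to a short interval.'' I would present it as: apply \cref{obs:lists-dist} twice, intersect, and simplify, with a one-line remark that $2k+2 \equiv 1 \pmod{2k+1}$ is what makes the two anchor constraints compatible at all.
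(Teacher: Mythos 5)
Your proposal is correct and matches the paper's (implicit) argument exactly: the paper gives no separate proof, stating only that the lemma ``immediately follows'' from \cref{obs:lists-dist}, and your plan of applying that lemma to both $u$ and $w$ and intersecting the two parity-constrained ranges, using that $s-t\equiv 1\pmod{2k+1}$ forces $s-t\in\{2k+2,-2k\}$, is precisely that deduction carried out in detail. The only nits are the intermediate off-by-one ($2k+2-d=k-\ell+2$, not $k-\ell+1$, which your second wrap direction then correctly supplies the missing endpoint for) and that the bound $\ell\le k$ should be justified by noting the statement is vacuous when $\ell\ge k$ since the claimed set is then all of $[2k]_0$.
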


In the following lemma we show that for a partial mapping $\vphi: V(G)\to [2k]_0$ for $k\geq 2$, for some vertex $v\in V(G)$, if every pair $(a,b)$ of its neighbors is precolored so that $\vphi(a)$ and $\vphi(b)$ have a common neighbor in $L(v)$, then $\vphi$ can be extended to $v$ so that it preserves the edges containing $v$.

\begin{lemma}\label{obs:neigh-2-3}
    Let $k\geq 2$, let $(G,L)$ be an instance of $\lhomo{C_{2k+1}}$ and let $v\in V(G)$. Let $\vphi: N(v)\to [2k]_0$ be a mapping such that for every $u,w\in N(v)$, we have that $N_{C_{2k+1}}(\vphi(u))\cap N_{C_{2k+1}}(\vphi(w))\cap L(v) \neq\emptyset$. Then $\bigcap_{u\in N(v)}N_{C_{2k+1}}(\vphi(u)) \cap L(v)\neq \emptyset$.
\end{lemma}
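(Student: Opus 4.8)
The plan is to argue that the neighborhoods of the relevant colors in $C_{2k+1}$ behave like intervals, so that pairwise intersection with $L(v)$ forces a global intersection; the key is to first restrict attention to $L(v)$ itself and show it is small and "interval-like." First I would record the elementary structure of $C_{2k+1}$ for $k\geq 2$: each color $c$ has exactly two neighbors $c-1,c+1$, and for two colors $a,b$ the set $N(a)\cap N(b)$ is empty unless $|a-b|\in\{0,2\}$ (distance taken mod $2k+1$); it equals $\{a\pm 1\}\setminus\{a\pm1\}$... more precisely, $N(a)\cap N(b)$ is $\{a-1,a+1\}$ if $a=b$, a single vertex (the one "between" them) if $a$ and $b$ are at cyclic distance $2$, and $\emptyset$ otherwise, using here that $2k+1\geq 5$ so there is no wraparound coincidence. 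So the hypothesis already tells us that for all $u,w\in N(v)$, $\vphi(u)$ and $\vphi(w)$ are at cyclic distance $0$ or $2$, and in the distance-$2$ case the unique common neighbor lies in $L(v)$.

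Next I would set $A=\{\vphi(u) : u\in N(v)\}$, the set of colors used on the neighbors. By the previous paragraph, any two elements of $A$ are at cyclic distance $0$ or $2$. I claim this forces $A$ to be contained in a set of the form $\{c-2,c,c+2\}$ or just $\{c,c+2\}$ or $\{c\}$ — essentially $A$ sits inside a short arithmetic progression of common difference $2$ — again because in a cycle of length $2k+1\geq 5$ you cannot have three colors pairwise at cyclic distance exactly $2$ unless they are $c-2,c,c+2$ for some $c$ (and even this requires $2k+1\geq 7$; for $2k+1=5$, i.e.\ $k=2$, three colors pairwise at distance $2$ in $C_5$ is impossible since distance-$2$ in $C_5$ is the same as distance-$2$, and one checks $\{0,2,4\}$ has $0$ and $4$ at distance $1$, not $2$ — so in fact $|A|\leq 2$ when $k=2$). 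In all cases, $\bigcap_{a\in A} N(a)$ is nonempty: if $|A|\leq 1$ it is one of the two neighbors; if $A=\{c,c+2\}$ it is $\{c+1\}$; if $A=\{c-2,c,c+2\}$ it is $\{c\}$ — wait, $N(c-2)\cap N(c)\cap N(c+2) = \{c-1\}\cap\{c+1\}=\emptyset$, so this case must actually be excluded by the hypothesis, which is fine since the hypothesis is about intersection \emph{with $L(v)$}. The cleaner route: the hypothesis guarantees pairwise common neighbors \emph{inside $L(v)$}, so I would directly show that the family $\{\,N(a)\cap L(v) : a\in A\,\}$ is a family of subsets of the cycle with the Helly-type property that pairwise intersections are nonempty implies the total intersection is nonempty.

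To get that Helly property, I would observe that each $N(a)\cap L(v)$ has size at most $2$, and is either a single vertex, or a pair $\{a-1,a+1\}$ of vertices at cyclic distance $2$. A finite family of such "length-$\leq 2$ arcs" on a cycle of length $\geq 5$ with pairwise nonempty intersection has nonempty total intersection: indeed if some member is a singleton $\{p\}$ then $p$ lies in every other member (as each other member meets it), so $p$ is in the intersection; and if every member is a pair $\{a-1,a+1\}$, then any two such pairs $\{a-1,a+1\},\{b-1,b+1\}$ intersect only if $a=b$ (distance $0$), $a=b+2$, or $a=b-2$, forcing — as argued — all the $a$'s to lie in $\{c,c+2\}$ or all equal, and then the common element is $c+1$ or the shared neighbor, respectively. (The case "all $a$'s equal to one value" gives intersection $\{a-1,a+1\}\neq\emptyset$.) I would assemble this into: either some $N(\vphi(u))\cap L(v)$ is a singleton, and then it is contained in all others and we are done; or all are $2$-element sets, in which case the set $A$ of neighbor-colors spans at most a difference-$2$ progression of length $\leq 2$ and the common element is explicit.

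The main obstacle I anticipate is bookkeeping the small cases cleanly — in particular making sure the arguments that "three colors pairwise at cyclic distance $2$ cannot occur" and "two distance-$2$ neighborhoods meet in a single vertex" are stated with the correct dependence on $2k+1\geq 5$, since $C_5$ is genuinely special (antipodal-type coincidences) and $C_6$ was explicitly flagged in the introduction as misbehaving — but $C_6$ is even, so it never arises here. A safe and uniform way to avoid case explosion is to phrase the whole thing as the Helly property for arcs of length at most $2$ on a cycle of length at least $5$, prove that once, and apply it to the family $\{N_{C_{2k+1}}(\vphi(u))\cap L(v)\}_{u\in N(v)}$, whose pairwise intersections are nonempty by hypothesis.
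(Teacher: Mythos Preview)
Your proposal eventually arrives at a correct argument, but it is far more elaborate than needed and contains a mistaken intermediate claim. The paper's proof is five lines: set $A=\{\vphi(u):u\in N(v)\}$, note that if $|A|\leq 2$ the conclusion is immediate, and show $|A|\geq 3$ is impossible. Indeed, distinct elements of $A$ must be at cyclic distance exactly $2$ (since $2k+1\geq 5$); if $0,2\in A$ and $i\in A\setminus\{0,2\}$, then $\dist(i,0)=2$ forces $i=2k-1$ while $\dist(i,2)=2$ forces $i=4$, so $2k-1=4$, impossible for integer $k\geq 2$.

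Your assertion that ``three colors pairwise at cyclic distance exactly $2$'' can occur as $\{c-2,c,c+2\}$ when $2k+1\geq 7$ is wrong: the cyclic distance between $c-2$ and $c+2$ is $\min(4,2k-3)$, which is never $2$ for $k\geq 2$. So in fact $|A|\leq 2$ holds for \emph{all} $k\geq 2$, not just $k=2$ as you checked. This is exactly the paper's observation, and once you have it, the Helly framework (arcs of length $\leq 2$, singleton case vs.\ pair case) is unnecessary: with $|A|\leq 2$ the pairwise hypothesis \emph{is} the global conclusion. Your Helly argument is not incorrect, but in the ``all pairs'' case you still need $|A|\leq 2$ to conclude that the $a$'s lie in $\{c,c+2\}$ --- and your justification for that (``as argued'') points back to the flawed $\{c-2,c,c+2\}$ claim. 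So the clean fix is simply to prove $|A|\leq 2$ directly, at which point the rest of your scaffolding can be discarded.
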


\begin{proof}
 Define $A=\{\vphi(u) \ | \ u\in N(v)\}$. If $|A|\leq 2$, then the statement clearly follows. We will show that this is the only case. So suppose that $|A|\geq 3$. If two distinct vertices of $C_{2k+1}$ for $k\geq 2$ have a common neighbor, then they must be at distance exactly two. Without loss of generality, let $0,2\in A$ and $1\in L(v)$. Moreover, let $i\in A\setminus\{0,2\}$. By assumption $N_{C_{2k+1}}(i)\cap N_{C_{2k+1}}(0)\neq \emptyset$, so $i=2k-1$. On the other hand, $N_{C_{2k+1}}(i)\cap N_{C_{2k+1}}(2)\neq \emptyset$, so $i=4$. Thus $2k-1=4$, a contradiction.
\end{proof}

In the next lemma we show that for an odd cycle $C$ and a vertex $v$ there is at least one pair of consecutive vertices of $C$ with equal distances to $v$.

\begin{lemma}\label{obs:cycle-dist}
    Let $G$ be a connected graph, let $C$ be a cycle in $G$ with consecutive vertices $c_0,\ldots,c_{2k}$ (indices computed modulo $2k+1$), and let $v\in V(G)\setminus V(C)$. Then there is $i\in [2k]_0$ such that $\dist(v,c_i)=\dist(v,c_{i+1})$.
\end{lemma}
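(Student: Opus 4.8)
The plan is to argue by contradiction using a parity/counting argument on the cycle. Suppose for contradiction that for every $i\in[2k]_0$ we have $\dist(v,c_i)\neq\dist(v,c_{i+1})$. Since $c_i$ and $c_{i+1}$ are adjacent, the triangle inequality forces $|\dist(v,c_i)-\dist(v,c_{i+1})|\leq 1$, so under our assumption the two distances differ by exactly $1$, i.e. $\dist(v,c_{i+1})-\dist(v,c_i)\in\{+1,-1\}$ for each $i$. The key observation is then that walking once around the cycle, from $c_0$ back to $c_0$ through $c_1,c_2,\ldots,c_{2k},c_0$, we take exactly $2k+1$ steps, and each step changes the distance to $v$ by either $+1$ or $-1$.

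First I would formalize this: define $a_i=\dist(v,c_i)$ for $i\in[2k]_0$, and set $\epsilon_i=a_{i+1}-a_i\in\{+1,-1\}$ for $i\in[2k]_0$ (all indices mod $2k+1$). Then the sum telescopes around the cycle:
\[
\sum_{i=0}^{2k}\epsilon_i \;=\; \sum_{i=0}^{2k}(a_{i+1}-a_i) \;=\; a_0-a_0 \;=\; 0.
\]
But this is a sum of $2k+1$ terms, each equal to $\pm 1$; such a sum has the same parity as $2k+1$, which is odd, hence the sum cannot be $0$. This contradiction shows that some step must have $\epsilon_i=0$, i.e. $\dist(v,c_i)=\dist(v,c_{i+1})$ for some $i$, which is exactly the claim.

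The only point needing a little care is the claim that $|\dist(v,c_i)-\dist(v,c_{i+1})|\leq 1$ whenever $c_ic_{i+1}\in E(G)$: this is immediate since from a shortest $v$-$c_i$ path one obtains a $v$-$c_{i+1}$ walk of length $\dist(v,c_i)+1$, and symmetrically. I would also remark that connectedness of $G$ is used only to ensure all the distances $a_i$ are finite, so that the differences $\epsilon_i$ are well-defined integers; the hypothesis $v\notin V(C)$ is not actually needed for this particular argument (if $v=c_j$ then $\dist(v,c_j)=0$ and $\dist(v,c_{j+1})=1$, and the telescoping argument still applies verbatim), but it matches how the lemma is used later. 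I do not anticipate a genuine obstacle here — the whole proof is the one-line parity observation that an odd number of $\pm 1$'s cannot sum to zero.
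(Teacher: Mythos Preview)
Your proof is correct and is essentially the same argument as the paper's: both observe that consecutive distances around the cycle differ by at most $1$, and that an odd number of $\pm 1$ steps cannot telescope to zero, forcing some step to be $0$. Your write-up is a bit more formal (introducing the $\epsilon_i$ and summing explicitly), but the underlying idea is identical.
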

\begin{proof}
    First observe, that for all $i$, we have $|\dist(v,c_i)-\dist(v,c_{i+1})|\leq 1$ since $c_ic_{i+1}\in E(G)$. Therefore, going around the cycle, the distance from $v$ to $c_i$ can increase by 1, decrease by 1, or remain the same. Since we have to end up with the same value at the end and the length of the cycle is odd, there is at least one pair of consecutive vertices $c_i,c_{i+1}$ such that $\dist(v,c_i)=\dist(v,c_{i+1})$.
\end{proof}

\section{Polynomial-time algorithms}\label{sec:poly}
In this section we prove \cref{thm:poly}.

\thmpoly*

\begin{proof}
Let $(G,L)$ be an instance of $\lhomo{C_{2k+1}}$ such that $G$ has diameter at most $k+1$. First, for every $i\in [2k]_0$, we check whether there is a list homomorphism $\vphi:(G,L)\to C_{2k+1}$ such that no vertex is mapped to $i$. So we look for a list homomorphism to a path which can be done in polynomial time by \cref{lem:paths-poly}.
    
If there is no such a homomorphism, then we know that all colors have to be used and thus we guess $2k+1$ vertices that will be mapped to distinct vertices of $C_{2k+1}$. Let $c_0,\ldots,c_{2k}$ be the vertices such that $c_i$ is precolored with $i$. We check whether such a partial assignment respects the lists and satisfies the edges with both endpoints precolored. Moreover, for $i\in [2k]_0$, if $c_i,c_{i+1}$ are non-adjacent, we add the edge $c_ic_{i+1}$ -- this operation is safe as $c_i,c_{i+1}$ are precolored with consecutive vertices of $C_{2k+1}$ and adding an edge does not increase the diameter. Furthermore, we exhaustively apply the reduction rules. 

So since now we can assume that the instance $(G,L)$ is reduced. Let us analyze $(G,L)$.

Observe that the vertices $c_0,\ldots,c_{2k}$ induce a $(2k+1)$-cycle $C$. Suppose there is a vertex $v$ that is not on $C$. By \cref{obs:cycle-dist}, there is $i\in [2k]_0$ such that $\dist(v,c_i)=\dist(v,c_{i+1})=:\ell$. 

First, we show that we cannot have $\ell\leq k$.
Suppose otherwise. Let $P_1,P_2$ be shortest $v$-$c_i$-, and $v$-$c_{i+1}$-paths, respectively. Let $u$ be the their last common vertex (it cannot be $c_i$ or $c_{i+1}$ as the distances are the same and $c_i,c_{i+1}$ are adjacent). Note that since $P_1$, $P_2$ are shortest, the $u$-$c_i$-path $P_1'$ obtained from $P_1$ and the $u$-$c_{i+1}$-path $P_2'$ obtained from $P_2$ have the same length. Therefore we can construct a cycle by taking $P_1', P_2'$ and the edge $c_ic_{i+1}$. The length of this cycle is odd, and it is at most $2k+1$, which is a contradiction with the fact that none of the reduction rules \ref{red:cycle-short}, \ref{red:two-cycles} cannot be applied.

Therefore we cannot have $\ell\leq k$, and thus, since $\diam(G)\leq k+1$, we have $\ell=k+1$. 

By \cref{obs:lists-dist-two}, we have that $L(v)\subseteq \{i+k, i+k+1, i+k+2\}$.
Since $v$ is an arbitrary vertex outside $C$, we can conclude that all lists of our instance have size at most 3.
Moreover, each vertex of $V_3$ (recall that by $V_r$ we denote the set of vertices of $V(G)$ with lists of size $r$) has list of type $(1,1)$.
Furthermore, since \ref{red:edges} cannot be applied, for a vertex with list $\{j,j+1,j+2\}$, the possible lists of its neighbors in $G[V_3]$ are then $\{j-1,j,j+1\}$, $\{j,j+1,j+2\}$, and $\{j+1,j+2,j+3\}$.

For a list $\{j-1,j,j+1\}$ of type $(1,1)$, we will call $j$ the \emph{middle vertex} of $\{j-1,j,j+1\}$. For a homomorphism $\vphi$, we will say that a vertex $v\in V_3$ is \emph{$\vphi$-middle}, if $\vphi$ maps $v$ to the middle vertex of its list.

Now consider a connected component $S$ of $G[V_3]$, let $v\in V(S)$, and let $L(v)=\{j-1,j,j+1\}$. The following claim is straightforward.

\setcounter{theorem}{1}
\begin{claim}\label{claim:middle}
Suppose there is a list homomorphism $\vphi: (S,L)\to C_{2k+1}$. Then

\begin{enumerate}[(1.)]
\item if $v$ is $\vphi$-middle, then any $u\in N_S(v)$ with list $\{j-1,j,j+1\}$ cannot be $\vphi$-middle, and every $w\in N_S(v)$ with list $\{j-2,j-1,j\}$ or $\{j,j+1,j+2\}$ has to be $\vphi$-middle,
\item if $v$ is not $\vphi$-middle, then every $u\in N_S(v)$ with list $\{j-1,j,j+1\}$ has to be $\vphi$-middle, and any $w\in N_S(v)$ with list $\{j-2,j-1,j\}$ or $\{j,j+1,j+2\}$ cannot be $\vphi$-middle.
\end{enumerate}
\end{claim}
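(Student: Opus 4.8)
\textbf{Proof plan for Claim~\ref{claim:middle}.}

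The plan is to unwind the definitions and use the structure of $C_{2k+1}$ together with the fact that consecutive vertices of $G$ must be mapped to adjacent (hence distinct) vertices of $C_{2k+1}$. Recall that a list of type $(1,1)$ has the form $\{a-1,a,a+1\}$ for three consecutive cycle-vertices, with $a$ its middle vertex; the key point is that in $C_{2k+1}$ (for $k\geq 2$) two vertices at distance $2$ — here $a-1$ and $a+1$ — have exactly one common neighbor, namely $a$, whereas $a-1$ and $a+1$ are \emph{not} adjacent since $2k+1\geq 5$.

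First I would handle part (1). Suppose $v$ is $\vphi$-middle, so $\vphi(v)=j$ where $L(v)=\{j-1,j,j+1\}$. For a neighbor $u$ of $v$ in $S$ with $L(u)=\{j-1,j,j+1\}$: since $vu\in E(G)$ we need $\vphi(u)\in N_{C_{2k+1}}(j)\cap L(u) = \{j-1,j+1\}$, so $u$ is mapped to an endpoint of its list, i.e.\ $u$ is not $\vphi$-middle. For a neighbor $w$ of $v$ in $S$ with $L(w)=\{j-2,j-1,j\}$: we need $\vphi(w)\in N_{C_{2k+1}}(j)\cap L(w) = \{j-1,j+1\}\cap\{j-2,j-1,j\} = \{j-1\}$ (using $j+1\notin\{j-2,j-1,j\}$, which holds since $2k+1\geq 5$), and $j-1$ is precisely the middle vertex of $\{j-2,j-1,j\}$, so $w$ is $\vphi$-middle. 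The case $L(w)=\{j,j+1,j+2\}$ is symmetric: $\vphi(w)\in\{j-1,j+1\}\cap\{j,j+1,j+2\}=\{j+1\}$, which is the middle vertex of $\{j,j+1,j+2\}$.

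For part (2), suppose $v$ is not $\vphi$-middle, so $\vphi(v)=j-1$ or $\vphi(v)=j+1$; by symmetry (reflecting the cycle) assume $\vphi(v)=j+1$. For a neighbor $u$ with $L(u)=\{j-1,j,j+1\}$: $\vphi(u)\in N_{C_{2k+1}}(j+1)\cap L(u)=\{j,j+2\}\cap\{j-1,j,j+1\}=\{j\}$, so $u$ is $\vphi$-middle. For a neighbor $w$ with $L(w)=\{j-2,j-1,j\}$: $\vphi(w)\in\{j,j+2\}\cap\{j-2,j-1,j\}=\{j\}$ (again $j+2\notin\{j-2,j-1,j\}$ since $2k+1\geq 5$), and $j$ is an endpoint of $\{j-2,j-1,j\}$, so $w$ is not $\vphi$-middle. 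For $L(w)=\{j,j+1,j+2\}$: $\vphi(w)\in\{j,j+2\}\cap\{j,j+1,j+2\}=\{j,j+2\}$, and both $j$ and $j+2$ are endpoints of $\{j,j+1,j+2\}$, so $w$ is not $\vphi$-middle. The case $\vphi(v)=j-1$ is handled by the mirror-image computation. I do not expect any real obstacle here; the only thing to be careful about is to consistently invoke $2k+1\geq 5$ (equivalently $k\geq 2$) to rule out spurious coincidences of indices modulo $2k+1$, and to note that when $k=2$ one should double-check that e.g.\ $j+2\not\equiv j-2$ and $j+2\not\equiv j-1 \pmod 5$, which indeed hold.
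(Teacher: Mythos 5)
The paper labels this claim ``straightforward'' and supplies no proof; your case analysis is the direct verification the paper expects. Your computations are correct, the $k\geq 2$ (equivalently $2k+1\geq 5$) hypothesis is invoked in exactly the right places to rule out index coincidences modulo $2k+1$, and the reflection symmetry used to dispatch the $\vphi(v)=j-1$ subcase is legitimate since a reflection of $C_{2k+1}$ fixing $j$ sends $\{j-2,j-1,j\}$ to $\{j,j+1,j+2\}$ while preserving which element is the middle of each list.
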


Thus deciding if one vertex of $S$ is $\vphi$-middle, already determines for every vertex of $S$ if it is $\vphi$-middle or not. It is described more formally in the following claim.

\begin{claim}\label{claim}
In polynomial time we can either (1) construct a partition $(U_1,U_2)$ of $V(S)$ ($U_1$, $U_2$ might be empty) such that for every list homomorphism $\vphi: (S,L)\to C_{2k+1}$, either all vertices of $U_1$ are $\vphi$-middle and no vertex of $U_2$ is $\vphi$-middle, or all vertices of $U_2$ are $\vphi$-middle and no vertex of $U_1$ is $\vphi$-middle, or (2) conclude that we are dealing with a no-instance.
\end{claim}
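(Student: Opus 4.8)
The plan is to build the partition by a connected-component analysis of $S = G[V_3]$ component using \cref{claim:middle}, treating the question ``is $v$ $\vphi$-middle?'' as a boolean and propagating constraints along edges. First, fix an arbitrary root vertex $r \in V(S)$ and run a BFS/DFS from $r$. We introduce a boolean variable $x_v$ for each $v \in V(S)$, intended to mean ``$v$ is $\vphi$-middle.'' By \cref{claim:middle}, each edge $uv$ of $S$ forces a relation between $x_u$ and $x_v$: if $L(u) = L(v)$ (type $(1,1)$ with the same middle), then $x_u$ and $x_v$ must differ (case (1.) first bullet and case (2.) first bullet say exactly that at most one of two adjacent equal-list vertices is $\vphi$-middle, and in fact if one is not middle the other must be); if the middles of $L(u)$ and $L(v)$ differ (lists $\{j-1,j,j+1\}$ and $\{j,j+1,j+2\}$, say), then $x_u = x_v$ (both middle or both not). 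So every edge is either an ``equality'' edge or an ``inequality'' edge on these booleans.

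Next, I would propagate: set $x_r = \texttt{true}$ tentatively, and traverse the spanning tree of $S$, assigning each $x_v$ the unique value forced by its tree-parent. This gives a candidate assignment $\sigma_1$; flipping every bit gives $\sigma_2$ (the assignment one gets from $x_r = \texttt{false}$). Then check every non-tree edge of $S$: if some edge's equality/inequality constraint is violated by $\sigma_1$ (equivalently by $\sigma_2$, since flipping all bits preserves both equality and inequality constraints), then there is no consistent $\vphi$-middle pattern on $S$, hence by \cref{claim:middle} no list homomorphism $\vphi \colon (S,L) \to C_{2k+1}$, and we output (2), a no-instance. Otherwise both $\sigma_1$ and $\sigma_2$ are globally consistent; set $U_1 = \{v : \sigma_1(v) = \texttt{true}\}$ and $U_2 = \{v : \sigma_1(v) = \texttt{false}\} = \{v : \sigma_2(v) = \texttt{true}\}$, and output $(U_1, U_2)$. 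This is clearly polynomial time (linear in $|V(S)| + |E(S)|$).

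It remains to argue correctness of the (1) branch: for any list homomorphism $\vphi \colon (S,L) \to C_{2k+1}$, the set of $\vphi$-middle vertices is exactly $U_1$ or exactly $U_2$. By \cref{claim:middle}, the indicator function $v \mapsto [\,v \text{ is } \vphi\text{-middle}\,]$ satisfies every edge constraint of the above system, so it is a globally consistent assignment; since $S$ is connected, a globally consistent assignment is determined by its value at $r$, and there are exactly two such values, giving $\sigma_1$ and $\sigma_2$. Hence the $\vphi$-middle set is $U_1$ or $U_2$, as required. One should double-check the edge-case where $S$ has a single vertex (then $U_2 = \emptyset$ is allowed, matching the claim's parenthetical) and where an edge $uv$ has $L(u), L(v)$ with middles differing by more than one — but \cref{obs:lists-dist} together with (R3) already restricts neighbors in $G[V_3]$ of a vertex with list $\{j-1,j,j+1\}$ to lists $\{j-2,j-1,j\}$, $\{j-1,j,j+1\}$, $\{j,j+1,j+2\}$, so only the two cases above arise.

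The main obstacle I anticipate is purely bookkeeping: making sure that \cref{claim:middle} really does pin down a $\{=, \neq\}$ constraint on $x_u, x_v$ for \emph{every} edge type that can occur in $G[V_3]$ — i.e., that the classification of neighbor-lists is exhaustive — and that the ``flip all bits'' symmetry is the \emph{only} freedom, which is where connectedness of $S$ is essential. There is no real mathematical difficulty beyond \cref{claim:middle}; the argument is essentially that a $2$-coloring-type propagation on a connected graph has at most two solutions, one the complement of the other.
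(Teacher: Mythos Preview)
Your proposal is correct and takes essentially the same approach as the paper: both propagate the boolean ``is $v$ $\vphi$-middle?'' along edges of the connected component $S$, using \cref{claim:middle} to classify each edge as an equality or inequality constraint (same list $\Rightarrow$ $\neq$; lists with adjacent middles $\Rightarrow$ $=$), and then either find the unique pair of complementary assignments or detect an inconsistency. The paper phrases this as an incremental BFS that checks each new vertex's already-placed neighbors for agreement, whereas you first fix a spanning tree and then verify non-tree edges --- these are equivalent formulations of the same signed-graph $2$-coloring argument, and your correctness justification (a consistent assignment on a connected graph is determined by its value at the root) is exactly what the paper's inductive proof $(\star)$ establishes.
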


\begin{claimproof}
    Fix $v\in V(S)$. We start with $U_1:=\{v\}$ and $U_2:=\emptyset$. Let $U=U_1\cup U_2$ and $j=|U|+1$. We set $v_1:=v$. As long as $j\leq |S|$, we proceed as follows. 
    We set $v_j$ to be a vertex in $N(U_1\cup U_2)$ -- since $U_1$ is non-empty, $j\leq |S|$, and $S$ is connected, such $v_j$ always exists. Moreover, let $N_j^1$ be the set of neighbors of $v_j$ with list $L(v_j)$ and let $N_j^2=N(v_j)\setminus N_j^1$. 
    
If (a) $N_j^1\cap U\subseteq U_1$ and $N_j^2\cap U\subseteq U_2$, then add $v_j$ to $U_2$. If (b) $N_j^1\cap U\subseteq U_2$ and $N_j^2\cap U\subseteq U_1$, then add $v_j$ to $U_1$. 
    If none of the two cases holds, return NO. In the first two cases we repeat the procedure. Clearly, the above procedure can be performed in polynomial time. 

    Let us verify the correctness. We will show the following.
    
    \smallskip
    
    ($\star$) For every $j\in [|S|]$, for every list homomorphism $\vphi: (S[\{v_1,\ldots,v_j\}],L)\to C_{2k+1}$, it holds that either (i) all vertices of $U_1\cap \{v_1,\ldots,v_j\}$ are $\vphi$-middle and no vertex of $U_2\cap \{v_1,\ldots,v_j\}$ is $\vphi$-middle, or (ii) all vertices of $U_2\cap \{v_1,\ldots,v_j\}$ are $\vphi$-middle and no vertex of $U_1\cap \{v_1,\ldots,v_j\}$ is $\vphi$-middle.
    
    \smallskip
    
    We prove it by induction on $j$. For $j=1$, we have $U_1=\{v\}$ and $U_2=\emptyset$, so the statement clearly follows. So now assume that the statement is true for some $i\in [|S|-1]$. Suppose there is a list homomorphism $\vphi: (S[\{v_1,\ldots,v_{i+1}\}],L)\to C_{2k+1}$. By inductive assumption, for $\vphi|_{\{v_1,\ldots,v_i\}}$, either (i) every vertex of $\{v_1,\ldots,v_i\}\cap U_1$ is $\vphi$-middle and no vertex of $\{v_1,\ldots,v_i\}\cap U_2$ is $\vphi$-middle or (ii) every vertex of $\{v_1,\ldots,v_i\}\cap U_2$ is $\vphi$-middle and no vertex of $\{v_1,\ldots,v_i\}\cap U_1$ is $\vphi$-middle.
    
\paragraph{Case 1: $v_{i+1}$ is $\vphi$-middle.}
By \cref{claim:middle}~(1.), all neighbors of $v_{i+1}$ in $\{v_1,\ldots,v_i\}$ with lists different than $L(v_{i+1})$ have to be $\vphi$-middle, so in case (i) all such neigbors have to be in $U_1$, and in case (ii) all such neighbors have to be in $U_2$. Furthermore, by \cref{claim:middle}~(1.), all neighbors of $v$ in $\{v_1,\ldots,v_i\}$ with list $L(v_{i+1})$ cannot be $\vphi$-middle, so in case (i) all such neigbors have to be in $U_2$, and in case (ii) all such neighbors have to be in $U_1$. Recall, that if $N_j^1\cap U\subseteq U_2$ and $N_j^2\cap U\subseteq U_1$ which is the case in (i), we added $v_{i+1}$ to $U_1$, so the claim follows. Similarly, if $N_j^1\cap U\subseteq U_1$ and $N_j^2\cap U\subseteq U_2$ which is the case in (ii), we added $v_{i+1}$ to $U_2$, and the claim also follows in this case.
    
\paragraph{Case 2: $v_{i+1}$ is not $\vphi$-middle.} By \cref{claim:middle}~(2.), any neighbor of $v_{i+1}$ in $\{v_1,\ldots,v_i\}$ with list different than $L(v_{i+1})$ cannot be $\vphi$-middle, so in case (i) all such neighbors have to be in $U_2$, and in case (ii) all such neighbors have to be in $U_1$. Furthermore, by \cref{claim:middle}~(2.), all neighbors of $v$ in $\{v_1,\ldots,v_i\}$ with list $L(v_{i+1})$ have to be $\vphi$-middle, so in case (i) all such neighbors have to be in $U_1$, and in case (ii) all such neighbors have to be in $U_2$. But then in cases (i) and (ii), respectively, we added $v_{i+1}$ to $U_2$ and $U_1$, and in each of the cases the claim follows.

This completes the proof of ($\star$).

Finally, observe that since ($\star$) is true, if $(G,L)$ is a yes-instance, then for any $j\in [|S|]$, for a vertex $v_j\in U_i$, where $\{i,i'\}=\{1,2\}$, all neighbors of $v_j$ with list $L(v_j)$ have to be in $U_{i'}$ and  all neighbors of $v_j$ with list different than $L(v_j)$ have to be in $U_{i}$. Therefore, if we returned NO, then indeed, $(G,L)$ is a no-instance. This completes the proof of the claim.
\end{claimproof}

 Therefore, for every connected component $S$ of $G[V_3]$ we solve two subinstances:
 \begin{enumerate}
     \item $I_1(S)=(S,L_1)$, where for every $v\in V(S)$ with list $\{i-1,i,i+1\}$ for some $i\in [2k]_0$, $L_1(v)=\{i\}$ if $v\in U_1$ and $L_1(v)=\{i-1,i+1\}$ if $v\in U_2$,
     \item $I_2(S)=(S,L_2)$, where for every $v\in V(S)$ with list $\{i-1,i,i+1\}$ for some $i\in [2k]_0$, $L_2(v)=\{i\}$ if $v\in U_2$ and $L_2(v)=\{i-1,i+1\}$ if $v\in U_1$.
 \end{enumerate}

Note that both subinstances have all lists of size at most two and thus can be solved in polynomial time by \cref{lem:red-2-sat}. If for some component in both cases we obtain NO, then we return NO.

\paragraph{Creating a BCSP instance.} Let $(V(G),L,C)=\BCSP(C_{2k+1},G[V_1\cup V_2],L)$ -- note that since we only consider vertices of $V_1\cup V_2$, all lists have size at most two. We will now modify the instance $(V(G),L,C)$ so that it is equivalent to the instance $(G,L)$ of $\lhomo{C_{2k+1}}$. For every $v\in V_3$ and for every pair of $u,w\in N(v)\cap V_2$, we leave in $C(u,w)$ only these pairs of vertices that have a common neighbor in $L(v)$ -- recall that by \cref{obs:neigh-2-3} this ensures us that there will be a color left for $v$.
Furthermore, for every connected component $S$ of $G[V_3]$, we add constraints according to which of the two possibilities $S$ can be properly colored (possibly $S$ can be colored in both cases) as follows. Let $\{p,p'\}=\{1,2\}$.
\begin{enumerate}
\item If there is no list homomorphism $\vphi: (S,L)\to C_{2k+1}$ such that all vertices of $U_p$ are $\vphi$-middle and all vertices of $U_{p'}$ are not $\vphi$-middle, then for $v\in V(S)$ with $L(v)=\{i-1,i,i+1\}$, if $v\in U_p$, we remove $i-1,i+1$ from the lists of neighbors of $v$, and if $v\in U_{p'}$, we remove $i$ from the lists of neighbors of $v$.

\item Moreover, for every pair $u,v\in V(S)$ with $L(u)=\{j-1,j,j+1\}$ and $L(v)=\{i-1,i,i+1\}$, for every $u'\in V_2\cap N(u)$, and for every $v'\in V_2\cap N(v)$, if $u,v\in U_p$, then we remove from $C(u',v')$ the pairs $(j,i+1)$, $(j,i-1)$, $(j-1,i)$, $(j+1,i)$, and if $v\in U_p$, $v\in U_{p'}$, then we remove from $C(u',v')$ the pairs $(j,i)$, $(j-1,i-1)$, $(j-1,i+1)$, $(j+1,i-1)$, $(j+1,i+1)$.
\end{enumerate}

Let us describe the intuition behind both types of constraints. Suppose that we have colored $G[V_1\cup V_2]$ so that all added constraints are satisfied and let $S$ be a connected of $G[V_3]$. Constraints added in 1. ensure that if one of the instances $I_1(S)$ and $I_2(S)$ is a no-instance, then the colors left for vertices of $S$ correspond only to that $I_p(S)$ for $p\in \{1,2\}$, which is a yes-instance. Furthermore, 2. ensures us that the colors left for vertices of $S$ correspond to the same instance $I_p(S)$.

This completes the construction of \textsc{BCSP} instance $(V(G),L,C)$.
Recall that all lists have size at most $2$ and thus by \cref{lem:red-2-sat} we solve $(V(G),L,C)$ in polynomial time.

\paragraph{Correctness.} It remains to show that $(V(G),L,C)$ is equivalent to $(G,L)$.
First suppose that there is a list homomorphism $\vphi: (G,L)\to C_{2k+1}$. Let us define $f=\vphi|_{V_1\cup V_2}$. Clearly $f$ respects the lists and the constraints coming from $\BCSP(C_{2k+1},G,L)$. Furthermore, since $\vphi$ is a homomorphism, for every $v\in V_3$ and every pair $u,w\in V_2$ of neighbors of $v$, it holds that $f(u)$ and $f(w)$ have a common neighbor in $L(v)$.
Suppose now that for some vertex $v$ with $L(v)=\{i-1,i,i+1\}$, for $v'\in V_2\cap N(v)$, we removed from $L(v')$ color $\vphi(v')$. Let $S$ be the connected component of $G[V_3]$ containing $v$. If $\vphi(v')=i$, then we must have $\vphi(v)\in \{i-1,i+1\}$ -- but we only removed $i$ from $L(v')$ if there was no list homomorphism from $(G[S],L)$ to $C_{2k+1}$ mapping $v$ to one of $\{i-1,i+1\}$, a contradiction. Similarly, if $\vphi(v')\in \{i-1,i+1\}$, then we must have $\vphi(v)=i$, but we only removed $i-1,i+1$ from $L(v')$ if there was no list homomorphism from $(G[S],L)$ to $C_{2k+1}$ mapping $v$ to $i$, a contradiction.

Finally, let us verify the constraints added in the last step. 
Let $S$ be a connected component of $G[V_3]$, let $u,v\in V(S)$ with $L(v)=\{i-1,i,i+1\}$ and $L(u)=\{j-1,j,j+1\}$, let $v'\in N(v)\cap V_2$ let $u'\in N(u)\cap V_2$. Suppose that $(f(u'),f(v'))\notin C(u',v')$ because of the last step. 
Let $\{p,p'\}=\{1,2\}$ and let $v\in U_p$. If $u\in U_{p}$, then either both $u,v$ are $\vphi$-middle or none of them. In the first case $\vphi(v')=i$ and $\vphi(u')=j$ and in the second case $\vphi(v')\in \{i-1,i+1\}$ and $\vphi(u')\in\{j-1,j+1\}$. Recall that for $u,v\in U_p$, we did not remove any pair from $(\{i-1,i+1\}\times \{j-1,j+1\})\cup \{(i,j)\}$, a contradiction. So suppose that $u\in U_{p'}$. Then $v$ is $\vphi$-middle if and only if $u$ is not $\vphi$-middle. 
Therefore, we either have $\vphi(v')\in \{i-1,i+1\}$ and $\vphi(u')=j$, or $\vphi(v')=i$ and $\vphi(v')=i$ and $\vphi(u')\in \{j-1,j+1\}$, but again no such a pair was removed in the last step, a contradiction.


So now suppose that there is $f: V(G)\to [2k]_0$ that satisfies all constraints of $(V(G),L,C)$. Define $\vphi(v)=f(v)$ for $v\in V_1\cup V_2$. Note that since $f$ satisfies all constraints, $\vphi$ is a list homomorphism on $G[V_1\cup V_2]$. We have to show that $\vphi$ can be extended to vertices of $V_3$. 
Consider a connected component $S$ of $G[V_3]$. Since the algorithm did not return NO, there is a list homomorphism $\vphi_S:(S,L)\to C_{2k+1}$. 
First, note that for any $v\in V_3$, its neighbors from $V_2$ are colored so that there is a color left on $L(v)$ for $v$.  
Moreover, recall that if $L(v)=\{i-1,i,i+1\}$, then the possible lists of neighbors of $v$ in $V_2$ are $\{i-1,i\}$ and $\{i,i+1\}$. 
Therefore, either all neighbors of $v$ in $V_2$ are colored with $i$ or with $i-1$ and $i+1$. In the first case, colors left for $v$ are both $i-1$ and $i+1$, and in the second case $i$ is left for $v$. 
Furthermore, let $\{p,p'\}=\{1,2\}$ and let $u,v\in V(S)$. If $u,v\in U_p$, then by the constraints added in 2., the colors left for $u,v$ allow both of them to be $\vphi$-middle, or both of them to be not $\vphi$-middle. Similarly, if $v\in U_p$ and $u\in U_{p'}$, then the colors left for $u,v$ allow one of them to be $\vphi$-middle and the other to be not $\vphi$-middle. Therefore, if we leave on the lists of vertices of $S$ only those colors that match $\vphi[V_1\cup V_2]$, we obtain one of the instances $I_1(S)$, $I_2(S)$, and by the constraints added in 1., we can only obtain a yes-instance. Thus we can extend $\vphi$ to all vertices of $S$. This completes the proof.
\end{proof}
\setcounter{theorem}{5}

\section{Subexponential-time algorithms}\label{sec:subexp}
In this section we prove \cref{thm:subexp}.
 
\thmsubexp*

We start with defining branching rules crucial for our algorithm. Recall that for an instance $(G,L)$ of $\lhomo{C_{2k+1}}$, by $V_\ell$ we denote the subset of $V(G)$ such that for every $v\in V_\ell$, we have $|L(v)|=\ell$, and $V_{\geq \ell}= \bigcup_{i\geq\ell} V_i$.

\paragraph{Branching rules.} Let $k\geq 2$, let $I=(G,L)$ be an instance of $\lhomo{C_{2k+1}}$, and let $d\geq\diam(G)$ -- note that for \cref{thm:subexp} it suffices to consider only $d=k+1$, but we do it more generally so we can later use it also for other values of $d$. Let $\mu=\sum_{\ell=2}^{2k+1}\ell\cdot|V_{\ell}|$. We define the following branching rules.
\begin{enumerate}[(B1)]
\item For a vertex $v\in V_{\geq 2}$ and for a color $a\in L(v)$, we branch on coloring $v$ with $a$ or not, i.e., we create two subinstances of $I$: $I_a=(G,L_a)$, $I_a'=(G,L'_a)$ such that $L_a(u)=L_a'(u)=L(u)$ for every $u\in V(G)\setminus\{v\}$, and $L_a(v)=\{a\}$ and $L_a'(v)=L(v)\setminus\{a\}$.
\item For a vertex $v$ we branch on the coloring of $N^{\leq d-1}[v]\cap V_{\geq 2}$, i.e., for every mapping $f$ of $N^{\leq d-1}[v]\cap V_{\geq 2}$ that respects the lists, we create a new subinstance $I_f=(G,L_f)$ such that $L_f(u)=L(u)$ for $u\notin N^{\leq d-1}[v]\cap V_{\geq 2}$ and $L_f(w)=\{f(w)\}$ for $w\in N^{\leq d-1}[v]\cap V_{\geq 2}$. 
\end{enumerate}

\paragraph{Algorithm \texttt{Recursion Tree}.} Let us describe an algorithm that for fixed $d$ takes an instance $(G,L)$ of $\lhomo{C_{2k+1}}$ with a fixed precolored $(2k+1)$-cycle $C$ and such that $\diam(G)\leq d$, and returns a rooted tree $\cR$ whose nodes are labelled with subinstances of $(G,L)$.
We first introduce the root $r$ of $\cR$ and we label it with $(G,L)$. 
Then for every node we proceed recursively as follows.
Let $s$ be a node labelled with an instance $(G',L')$ of $\lhomo{C_{2k+1}}$. We first exhaustively apply to $(G',L')$ the reduction rules (R1)--(R6), and if some of the reduction rules returns NO, then $s$ does not have any children. 
Otherwise, if there is a vertex $v\in V_{\geq 2}$ with at least $(\mu\log\mu)^{1/d}$ neighbors in $V_{\geq 2}$, then we will apply (B1) for $v$ and for $a\in L(v)$ chosen as follows.
If on $N_{G'[V_{\geq 2}]}(v)$ there are no lists of type $(2)$, then we take any $a\in L(v)$. Otherwise, let $S$ be the most frequent list of type $(2)$ on $N_{G'[V_{\geq 2}]}(v)$, and let $S=\{j-1,j+1\}$ for some $j\in [2k]_0$. Then we take any $a\in L(v)\setminus\{j\}$.
After application of (B1), we exhausively apply the reduction rules to each instance. Furthermore, for each instance created by (B1), we create a child node of $s$ and we label it with that instance. 

If there is no vertex $v\in V_{\geq 2}$ with at least $(\mu\log\mu)^{1/d}$ neighbors in $V_{\geq 2}$, then we apply the branching rule (B2) for some $v\in V_{\geq 2}$, we exhausively apply the reduction rules, and again for each instance created by (B2), we introduce a child node of $s$.
The choice of $v$ is not completely arbitrary. If possible, we choose $v$ so that $\dist(v,C)\geq \lceil \frac{d}{2}\rceil$ -- note that the cycle $C$ is present in all instances of $\cR$.
The nodes corresponding to instances created by (B2) are leaves, i.e., we do not recurse on the children of $s$ for which we applied (B2).   

Let us analyze the running time of the algorithm \texttt{Recursion Tree} and properties of the constructed tree $\cR$.

\begin{lemma}\label{lem:recursion-tree}
Given an instance $(G,L)$ of $\lhomo{C_{2k+1}}$ with a fixed precolored $(2k+1)$-cycle $C$ and such that $n=|G|$, $\diam(G)\leq d$, the algorithm \texttt{Recursion Tree} in time $\exp\left({\Oh((n\log{n})^{\frac{d-1}{d}})}\right)$ returns a tree $\cR$ whose nodes are labelled with subinstances of $(G,L)$, and $(G,L)$ is a yes-instance if and only if at least one subinstance corresponding to a leaf of $\cR$ is a yes-instance.
\end{lemma}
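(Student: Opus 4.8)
The plan is to separate the statement into three parts: (a) correctness, i.e.\ $(G,L)$ is a yes-instance iff some leaf-instance is a yes-instance; (b) the bound on the depth and on the branching factor of $\cR$; and (c) the resulting running-time estimate. Part (a) is immediate from the branching rules together with \cref{lem:reduction-rules}: each application of (B1) or (B2) replaces an instance by a set of subinstances whose disjunction is equivalent to the original (for (B1) this is obvious since we split on whether $v$ gets color $a$; for (B2) we enumerate \emph{all} list-respecting colorings of the finite set $N^{\leq d-1}[v]\cap V_{\geq 2}$), and the reduction rules preserve the answer. Hence by induction along $\cR$ the label of the root is a yes-instance iff some leaf label is, which is exactly what we need.

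The technical heart is part (b): bounding the work done in the (B1)-phase. First I would record that the reduction rules never increase $\mu=\sum_{\ell\ge 2}\ell\cdot|V_\ell|$; (R1)--(R4),(R6) only shrink graph/lists, and (R5) removes a color from a list, so $\mu$ is monotonically non-increasing along every root-to-node path, and $\mu\le n(2k+1)=\Oh(n)$ initially. Next, the crucial claim: each application of (B1) to the chosen pair $(v,a)$ strictly decreases $\mu$ in \emph{both} children by at least $(\mu\log\mu)^{1/d}-\Oh(1)$, which is $\Omega((\mu\log\mu)^{1/d})$ for $\mu$ large. In the branch $I_a'$ we delete $a$ from $L(v)$, and since $v$ had $\ge(\mu\log\mu)^{1/d}$ neighbors in $V_{\ge 2}$, reduction rule (R3) is triggered on the edges to those neighbors whose lists lose support: here is where the careful choice of $a$ enters — if the most frequent type-$(2)$ list on $N_{G'[V_{\ge 2}]}(v)$ is $\{j-1,j+1\}$ and $a\neq j$, then every such neighbor loses a color once $a$ is removed from $L(v)$ (a vertex of $C_{2k+1}$ that is not $j$ is adjacent to at most one of $j-1,j+1$), so $\mu$ drops by at least (number of such neighbors); a counting argument shows that either there are $\Omega((\mu\log\mu)^{1/d})$ such type-$(2)$ neighbors, or $\Omega((\mu\log\mu)^{1/d})$ neighbors of some other list-type, for which removing $a$ or later reductions again cost at least a constant per neighbor. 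In the branch $I_a$ we precolor $v$, and then \cref{obs:lists-dist} applied from the new precolored vertex forces list shrinkage on all of $v$'s many $V_{\ge 2}$-neighbors, again a $\Omega((\mu\log\mu)^{1/d})$ decrease. Consequently any root-to-node path in $\cR$ that uses only (B1) has length $\Oh\!\big(\mu/(\mu\log\mu)^{1/d}\big)=\Oh\!\big((n/\log n)^{(d-1)/d}\big)$, and since (B1) is binary, the number of such nodes is $2^{\Oh((n/\log n)^{(d-1)/d})}=\exp(\Oh((n\log n)^{(d-1)/d}))$.

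It then remains to account for the single (B2) step at each leaf's parent: we branch on all colorings of $N^{\leq d-1}[v]\cap V_{\ge 2}$, and in the regime where (B2) is invoked \emph{every} vertex of $V_{\ge 2}$ has fewer than $(\mu\log\mu)^{1/d}$ neighbors in $V_{\ge 2}$, so $|N^{\leq d-1}[v]\cap V_{\ge 2}|\le \big((\mu\log\mu)^{1/d}\big)^{\,d-1}=\Oh\big((n\log n)^{(d-1)/d}\big)$, and each such vertex has at most $2k+1=\Oh(1)$ list choices; hence (B2) produces at most $\exp(\Oh((n\log n)^{(d-1)/d}))$ children. Multiplying the number of (B1)-nodes by the (B2)-fan-out and by the polynomial cost of exhaustively applying the reduction rules at each node (each rule is polynomial by the remark after their definition, and only polynomially many applications are needed since $\mu$ strictly decreases) gives the claimed total running time $\exp(\Oh((n\log n)^{(d-1)/d}))$, and this dominates the size of $\cR$ as well. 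I expect the main obstacle to be the per-(B1)-step potential drop: one must verify that the greedy choice of the frequent type-$(2)$ list and the avoidance of its middle vertex $j$ genuinely guarantees that a large number of neighbors lose a list entry (so that (R3) fires widely), and that in the complementary case — few type-$(2)$ neighbors — the remaining $\Omega((\mu\log\mu)^{1/d})$ neighbors of other types still force a comparable decrease via (R3) or via the precoloring branch; this is the one place where the structure of $C_{2k+1}$ (each non-middle vertex of a type-$(2)$ list dominates at most one endpoint) is used in an essential way.
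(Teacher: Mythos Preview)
Your correctness argument (a) and your (B2) fan-out bound are fine and match the paper. The gap is in your analysis of (B1): you claim that \emph{both} children drop $\mu$ by $\Omega((\mu\log\mu)^{1/d})$, but this is false for the branch $I_a'$ in which $a$ is merely removed from $L(v)$. Your justification --- ``$a\neq j$, and a vertex of $C_{2k+1}$ that is not $j$ is adjacent to at most one of $j-1,j+1$, so every neighbor with list $\{j-1,j+1\}$ loses a color once $a$ is removed from $L(v)$'' --- does not work: for (R3) to delete, say, $j-1$ from such a neighbor's list you would need $L(v)\setminus\{a\}$ to miss both $j-2$ and $j$, and nothing in the setup guarantees this (e.g.\ $L(v)=\{j-2,j,j+2\}$, $a=j-2$ leaves both $j-1$ and $j+1$ supported). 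In the $I_a'$ branch one can only guarantee $\mu$ drops by~$1$.

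The role of the careful choice $a\neq j$ is in the \emph{other} branch $I_a$: once $L(v)=\{a\}$, (R3) forces every neighbor's list into $\{a-1,a+1\}$; a neighbor with list $\{j-1,j+1\}$ then shrinks because $\{j-1,j+1\}\cap\{a-1,a+1\}$ has at most one element (equality of the two pairs would force $a=j$). A pigeonhole over the $2k+1$ possible type-$(2)$ lists shows that at least a $\tfrac{1}{2k+1}$-fraction of the $\ge(\mu\log\mu)^{1/d}$ neighbors either have the most frequent type-$(2)$ list $\{j-1,j+1\}$ or a non-type-$(2)$ list, and all of these shrink. Hence the correct recurrence is the \emph{unbalanced}
\[
F(\mu)\ \le\ F\!\left(\mu-\tfrac{1}{2k+1}(\mu\log\mu)^{1/d}\right)+F(\mu-1)+\p(n),
\]
whose solution is $2^{\Oh((\mu\log\mu)^{(d-1)/d})}$ (this is where the $\log$ factor in the exponent actually comes from; see e.g.\ \cite{DBLP:journals/siamdm/DebskiPR22}). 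Your balanced recurrence would give a strictly smaller exponent and is simply not available here.
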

\begin{proof}
First, we show that for every node $s$ of $\cR$ the corresponding instance is a yes-instance if and only if at least one instance corresponding to a child of $s$ is a yes-instance.
Let $s$ be a node of $\cR$ and let $(G',L')$ be the corresponding instance. The algorithm \texttt{Recursion Tree} first applies reduction rules to $(G',L')$ and by \cref{lem:reduction-rules}, we obtain an equivalent instance. Furthermore, we applied to $(G',L')$ either (B1) or (B2) where the branches correspond to all possible colorings of some set of vertices, so indeed $(G',L')$ is a yes-instance if and only if at least one instance corresponding to a child of $s$ is a yes-instance. Since the root of $\cR$ is labelled with $(G,L)$, we conclude that $(G,L)$ is a yes-instance if and only if at least one instance corresponding to a leaf of $\cR$ is a yes-instance.

It remains to analyze the running time. Let $F(\mu)$ be an upper bound on the running time of \texttt{Recursion Tree} applied to an instance $(G',L')$ with $\mu=\sum_{\ell=2}^{2k+1}\ell\cdot|V_{\ell}|$ and let $\p(n)$ be a polynomial such that exhaustive application of reduction rules (R1)--(R6) to an $n$-vertex instance can be performed in time $\p(n)$. Observe that if we apply (B1) to $(G',L')$, then we obtain
\[
F(\mu)\leq F\left(\mu - \frac{(\mu\log\mu)^{1/d}}{2k+1}\right) + F(\mu-1) + 2\cdot \p(n).
\]

Indeed, let $v,a$ be, respectively, the vertex and the color to which we apply (B1) -- recall that $v$ has at least $(\mu\log\mu)^{1/d}$ neighbors in $V_{\geq 2}$ If there are no lists of type $(2)$ on $N_{G'[V_{\geq 2}]}(v)$, then in the branch where we set $L(v)=\{a\}$, after application of reduction rules, every neighbor of $v$ must have $L(v)\subseteq \{a-1,a+2\}$. If $|L(v)|\geq 2$ and $L(v)\neq \{a-1,a+1\}$, then $|L(v)|\cap \{a-1,a+1\}|< |L(v)|$. Therefore, in this case we decrease sizes of all lists on $N_{G'[V_{\geq 2}]}(v)$.
Otherwise, we chose $a\in L(v)\setminus\{j\}$, where $\{j-1,j+1\}$ is the most frequent list of type $(2)$ on $N_{G'[V_{\geq 2}]}(v)$. Since there are exactly $2k+1$ lists of type $(2)$, at least $\frac{1}{2k+1}$-fraction of $N_{G'[V_{\geq 2}]}(v)$ has list of different type than $(2)$ or has list $\{j-1,j+1\}$. Thus, for the branch where we set $L(v)=\{a\}$, the sizes of lists of at least $\frac{1}{2k+1}\cdot(\mu\log\mu)^{1/d}$ vertices decrease.  In the branch where we remove $a$ from $L(v)$, we decrease the size of $L(v)$ at least by one. In both branches we apply the reduction rules, so the desired inequality follows.

If we apply (B2) to $(G',L')$ -- recall that we stop recursing in this case -- then we obtain
\[
F(\mu)\leq (2k+1)^{ (\mu\log\mu)^{\frac{d-1}{d}}} \cdot \p(n),
\]
since we guess the coloring on $N_{G'[V_\geq 2]}^{\leq d-1}(v)$ whose size is bounded by $(\mu\log\mu)^{\frac{d-1}{d}}$ (in this case the degrees in $G'[V_{\geq 2}]$ are bounded by $(\mu\log\mu)^{1/d}$) and the number of possible colors is at most $2k+1$. 

We can conclude that $F(\mu)\leq 2^{\Oh((\mu\log\mu)^{\frac{d-1}{d}})}$ (see for example~\cite{DBLP:journals/siamdm/DebskiPR22}) which, combined with the inequality $\mu\leq (2k+1)n=\Oh(n)$, completes the proof.
\end{proof}

In the following lemma we show that we can solve every instance corresponding to a leaf of $\cR$ in polynomial time.

\begin{restatable}{lemma}{lemLeafInstance}\label{lem:leaf-instance}
Let $(G',L')$ be an instance of $\lhomo{C_{2k+1}}$ such that $\diam(G')\leq k+2$ and let $C$ be a fixed precolored $(2k+1)$-cycle. Assume that we applied the algorithm \texttt{Recursion Tree} to $(G',L')$ and let $\cR$ be the resulting recursion tree. Let $(G,L)$ be an instance corresponding to a leaf in $\cR$. Then $(G,L)$ can be solved in polynomial time.
\end{restatable}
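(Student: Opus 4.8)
The plan is to read off the structure of a leaf instance, observe that all of its lists have size at most $2$, and then invoke \cref{lem:red-2-sat}. A leaf of $\cR$ is labelled either with an instance on which the exhaustive application of the reduction rules returned NO — in which case, by \cref{lem:reduction-rules}, it is a no-instance and we are done — or with an instance obtained from the (reduced) instance at its parent by one application of the branching rule (B2), followed by another round of reduction rules. So from now on assume $(G,L)$ is reduced, $\diam(G)\le k+2$, it carries the precolored $(2k+1)$-cycle $C$, and it was produced from the instance $(\widehat G,\widehat L)$ — the state right after the (B2)-step — by reduction rules; here $\widehat G$ is the parent's graph, so $\diam(\widehat G)\le k+2$ as well. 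Writing $d$ for the parameter of \texttt{Recursion Tree} (so $\diam(\widehat G)\le d\le k+2$), the rule (B2) replaces the list of every vertex of $N^{\le d-1}[v]\cap V_{\ge 2}$ by a singleton, while the vertices of $V_1$ are singletons already; hence every vertex of $N^{\le d-1}_{\widehat G}[v]$ is precolored in $(\widehat G,\widehat L)$.

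The key claim is that every vertex of $G$ has a list of size at most $2$. Let $w$ be a vertex that is not precolored in $(G,L)$. Since no reduction rule turns a singleton list into a larger one, the corresponding vertex(-class) of $\widehat G$ contains no vertex precolored in $(\widehat G,\widehat L)$; in particular, fixing a representative $w'$ of it in $\widehat G$, we get $\dist_{\widehat G}(v,w')\ge d$, and as $\diam(\widehat G)\le d$ this is an equality. Take a shortest $v$--$w'$ path in $\widehat G$; its penultimate vertex $p$ is a neighbour of $w'$ with $\dist_{\widehat G}(v,p)=d-1$, hence $p$ is precolored in $(\widehat G,\widehat L)$, say with colour $a$. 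The reduction rules applied afterwards may identify or contract vertices, but they never delete an adjacency, so in $(G,L)$ the images of $w'$ and $p$ are still adjacent; moreover any vertex merged with $p$ remains precolored with colour $a$ — through (R6) because the merged lists must coincide, through (R2) because otherwise some list would become empty and (R4) would have returned NO. Thus the image of $p$ is a precolored neighbour of $w$ in the reduced instance $(G,L)$, so, since (R3) is no longer applicable to that edge, $L(w)\subseteq N_{C_{2k+1}}(a)$, i.e.\ $|L(w)|\le 2$. As precolored vertices trivially satisfy $|L(v)|\le 2$, the claim follows.

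It remains to observe that $\BCSP(C_{2k+1},G,L)$ now has all lists of size at most $2$, so by \cref{lem:red-2-sat} it — and hence $(G,L)$ — can be solved in polynomial time. The main obstacle is precisely the key claim: one has to track how the reduction rules (R2) and (R6) interact with the precoloring introduced by (B2), arguing that the penultimate vertex of a shortest path from $v$, which supplies every non-precolored vertex with a precolored neighbour, survives as a precolored neighbour after these identifications and contractions (using that we are not in the NO-branch, so no list becomes empty). Note that in contrast to \cref{thm:poly}, where size-$3$ lists of type $(1,1)$ do occur and are eliminated through their degree-$2$ neighbours using \cref{obs:neigh-2-3}, here the precolored ball of radius $d-1$ around $v$ guarantees that no size-$3$ list arises at all, so the component analysis of $G[V_3]$ is not needed; if for some reason a size-$3$ list survived, one would instead follow the template of the proof of \cref{thm:poly} and encode such a vertex through its neighbours before applying \cref{lem:red-2-sat}.
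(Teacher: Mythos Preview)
Your argument rests on a misreading of the branching rule (B2). You interpret it as precolouring $N_{\widehat G}^{\le d-1}[v]\cap V_{\ge 2}$, the ball in the whole graph, and conclude that every vertex of $N_{\widehat G}^{\le d-1}[v]$ is precoloured. But the running-time analysis in \cref{lem:recursion-tree} needs, and the paper explicitly uses, the ball $N_{G[V_{\ge 2}]}^{\le d-1}[v]$ taken inside $G[V_{\ge 2}]$: only there is its size bounded by $(\mu\log\mu)^{(d-1)/d}$, since the low-degree hypothesis concerns degrees in $G[V_{\ge 2}]$. At the reduced parent node, (R6) has already collapsed $V_1$ to exactly $V(C)$, so these two balls differ: a vertex $w$ can be within $G$-distance $d$ of $v$ via a shortcut through $C$ while being at $G[V_{\ge 2}]$-distance $>d$ from $v$. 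Such a $w$ is not precoloured by (B2), and none of its $V_{\ge 2}$-neighbours need be either, so your ``penultimate vertex is precoloured'' step breaks down and lists of size $3$ can survive. A concrete picture for $k=2$: take $w$ at $G$-distance $2$ from $c_0$ along a path disjoint from the component of $v$ in $G[V_{\ge 2}]$; after reduction $L(w)\subseteq\{3,0,2\}$ and nothing forces it smaller.

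What the paper actually does is use the extra feature of (B2) you did not touch: when possible, $v$ is chosen with $\dist(v,C)\ge\lceil d/2\rceil$. Then any $u$ still in $V_{\ge 3}$ has its shortest $u$--$v$ path passing through $C$, whence $\dist(u,C)\le d-\dist(v,C)\le\lfloor(k+2)/2\rfloor$. Combining \cref{obs:lists-dist} (the list at distance $<k$ from a precoloured vertex is independent) with the containment $L(u)\subseteq\{i-2,\dots,i+2\}$ coming from \cref{obs:cycle-dist} and \cref{obs:lists-dist-two} shows every size-$3$ list is of type $(2,2)$, and \cref{lem:lists-3} finishes. Your fallback remark (``follow the template of the proof of \cref{thm:poly}'') would not work as stated: that proof handles type-$(1,1)$ lists, whereas here the surviving lists are of type $(2,2)$, and it is precisely \cref{lem:lists-3}---together with the distance-to-$C$ bound---that is needed.
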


In order to prove \cref{lem:leaf-instance}, we first prove that we can solve every instance whose lists are of special form in polynomial time.

\begin{lemma}\label{lem:lists-3}
Let $k\geq 2$ and let $(G,L)$ be a reduced instance of $\lhomo{C_{2k+1}}$ such that $G$ is connected and for every vertex $v\in V(G)$, the list $L(v)$ either has size at most $2$ or is of type $(2,2)$. Then $(G,L)$ can be solved in polynomial time.
\end{lemma}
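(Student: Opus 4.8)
The plan is to reduce this to an instance of \textsc{BCSP} with all lists of size at most $2$, and then invoke \cref{lem:red-2-sat}. The only vertices that are a problem are those of $V_4$, i.e.\ vertices whose list is of type $(2,2)$: such a list has the form $\{i-2,i,i+2\}$... wait, a list of type $(2,2)$ has four elements $c_0,c_1,c_2,c_3$ with $c_{j+1}=c_j+2$, so it is $\{i,i+2,i+4,i+6\}$ for some $i$. The key structural observation I would establish first is that, since the instance is reduced, (R3) restricts the possible lists of a neighbor of such a vertex: if $L(v)=\{i,i+2,i+4,i+6\}$ then every neighbor $u$ of $v$ has $L(u)\subseteq N_{C_{2k+1}}(L(v))=\{i-1,i+1,i+3,i+5,i+7\}$, and moreover (using that each element of $L(u)$ must have a neighbor in $L(v)$, which since $k\ge 2$ forces consecutive structure) the realizable two-element lists on $N(u)$ interacting with $v$ are of type $(2)$, sitting "between" consecutive elements of $L(v)$. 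I would then mimic the middle-vertex analysis from the proof of \cref{thm:poly}: within a connected component of $G[V_4]$, once we decide, for one vertex, which of its four colors it takes — or more precisely which "parity class" within the type-$(2,2)$ structure — this propagates. In fact a type-$(2,2)$ list $\{i,i+2,i+4,i+6\}$ splits naturally into the two "inner" vertices $\{i+2,i+4\}$ and the two "outer" vertices $\{i,i+6\}$, and I expect the argument to show that adjacent type-$(2,2)$ vertices are coupled just as in \cref{claim:middle}.

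Concretely, the steps I would carry out are: (1) Show that $G[V_4]$ can be preprocessed so that, in each connected component $S$ of $G[V_4]$, a single binary choice (analogous to the partition $(U_1,U_2)$ of \cref{claim}) determines for every vertex of $S$ a sublist of size at most $2$; this uses \cref{lem:red-2-sat} on the two resulting subinstances $I_1(S),I_2(S)$ to decide feasibility of each choice. (2) Check, via \cref{obs:neigh-2-3} (valid since $k\ge 2$), that for every $v\in V_4$ and every pair $u,w\in N(v)\cap V_2$, if $\vphi(u),\vphi(w)$ each have a common neighbor with the others inside $L(v)$ then there is a global color for $v$ — so the constraints between the size-$2$-list neighbors of $v$ suffice to guarantee extendability to $v$. (3) Build $\BCSP(C_{2k+1},G[V_{\le 2}],L)$, augment it with: (3a) the pairwise constraints on $N(v)\cap V_2$ for each $v\in V_4$ ensuring a color survives on $L(v)$; (3b) for each component $S$ of $G[V_4]$, constraints forcing the colors left on the size-$2$-list neighbors of $S$ to be consistent with a single feasible choice $I_p(S)$ — exactly as in the last step of the proof of \cref{thm:poly}. (4) Solve the resulting size-$\le 2$ \textsc{BCSP} by \cref{lem:red-2-sat}, and argue equivalence with $(G,L)$ in both directions, extending any \textsc{BCSP} solution to $V_4$ component by component.

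The main obstacle I anticipate is verifying the propagation lemma for type-$(2,2)$ lists, i.e.\ the analogue of \cref{claim:middle}: I must carefully enumerate which pairs of adjacent type-$(2,2)$ lists can occur in a reduced instance (the list of a $V_4$-neighbor of a vertex with list $\{i,i+2,i+4,i+6\}$ is again a shift of a type-$(2,2)$ list, but I need to pin down exactly which shifts survive the reduction rules (R3),(R5)) and then check that choosing the "inner/outer" status of one vertex forces it for all its $V_4$-neighbors. A secondary subtlety is that a vertex of $V_4$ may have neighbors with lists of size $3$ that are \emph{not} of type $(2,2)$ — but the hypothesis of the lemma forbids lists of size $3$ entirely (every list has size $\le 2$ or is type $(2,2)$, i.e.\ size $4$), so the only interaction is $V_4$ with $V_{\le 2}$ and $V_4$ with $V_4$, which keeps the case analysis bounded. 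Once the propagation is nailed down, the \textsc{BCSP} construction and the correctness proof are essentially a transcription of the corresponding portion of the proof of \cref{thm:poly}.
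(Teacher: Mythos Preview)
Your proposal rests on a misreading of the definition of ``type $(2,2)$''. By the paper's convention, a list of type $(\ell_1,\ldots,\ell_r)$ has $r+1$ elements; so a type-$(2,2)$ list has \emph{three} elements $\{i-2,i,i+2\}$, not four. Your initial instinct was correct, and the self-correction to $\{i,i+2,i+4,i+6\}$ was wrong. Consequently the entire $V_4$ analysis, the ``inner/outer'' split $\{i+2,i+4\}$ versus $\{i,i+6\}$, and the claim that lists of size $3$ are forbidden are all aimed at the wrong object.

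Once you fix this, the $\vphi$-middle propagation you hope to transplant from \cref{thm:poly} does not go through cleanly here: for $L(v)=\{i-2,i,i+2\}$ the neighbouring $V_3$-lists are $\{i-3,i-1,i+1\}$ and $\{i-1,i+1,i+3\}$, and mapping $v$ to its middle colour $i$ does not force the neighbour to be non-middle (it may be mapped to $i+1$, which is its middle). The paper uses a different and simpler mechanism. It observes that the only $V_2$-lists that can appear on a neighbour of $v$ are $\{i-1,i+1\}$, $\{i-3,i+1\}$, $\{i-1,i+3\}$; it then \emph{adds} dummy neighbours so that all three are present. The point is that the colours on a neighbour with list $\{i-3,i+1\}$ and a neighbour with list $\{i-1,i+3\}$ together uniquely determine the colour of $v$, so there is no component-wise binary choice to track at all. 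The BCSP is built on $V_1\cup V_2$ only, with (i) for each $v\in V_3$, constraints forcing every pair of $V_2$-neighbours to have a common neighbour in $L(v)$, and (ii) for each edge $uv$ inside $G[V_3]$, three explicitly-listed forbidden pairs on $(N(u)\cap V_2)\times(N(v)\cap V_2)$. Correctness then follows by showing the unique extension to each $V_3$-vertex is consistent along $V_3$-edges, which is a short case check.
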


\begin{proof}
If there are no lists of size $3$, then we only have lists of size at most $2$, and thus $(G,L)$ can be solved in polynomial time by \cref{lem:red-2-sat}.
So let $v\in V_{3}$ be a vertex with list $L(v)=\{i-2,i,i+2\}$ for some $i\in [2k]_0$ and let $u$ be a neighbor of $v$.
Observe that since the reduction rule \ref{red:edges} cannot be applied, if $u\in V_2$, then $u$ must have one of the lists: $\{i-1,i+1\}$  $\{i-3,i+1\}$, $\{i-1,i+3\}$, and if $u\in V_{\geq 3}$, then $u$ must have one of the lists $\{i-3,i-1,i+1\}$, or $\{i-1,i+1,i+3\} $ (see \cref{fig:lists}).

    \begin{center}
\begin{figure}[h]
    \centering
    \begin{tikzpicture}[every node/.style={draw,circle,fill=white,inner sep=0pt,minimum size=8pt},every loop/.style={},scale=0.8]
    
\foreach \k in {0,5,10}
{    
\node (v0) at (0.5+\k,-0.5) {};
\node (v1) at (1.5+\k,-0.5) {};
\node (v2) at (2.2+\k,0) {};
\node[fill=orange] (v3) at (2.6+\k,1) {};
\node (v4) at (2.2+\k,2) {};
\node[fill=orange] (v5) at (1+\k,2.5) {};
\node (v6) at (-0.2+\k,2) {};
\node[fill=orange] (v7) at (-0.6+\k,1) {};
\node (v8) at (-0.2+\k,0) {};

\draw (v0)--(v1)--(v2)--(v3)--(v4)--(v5)--(v6)--(v7)--(v8)--(v0);

}

\foreach \k in {2.5,7.5}
{    
\foreach \j in {-4}
{
\node (v0) at (0.5+\k,-0.5+\j) {};
\node (v1) at (1.5+\k,-0.5+\j) {};
\node (v2) at (2.2+\k,0+\j) {};
\node[fill=orange] (v3) at (2.6+\k,1+\j) {};
\node[fill=blue] (v4) at (2.2+\k,2+\j) {};
\node[fill=orange] (v5) at (1+\k,2.5+\j) {};
\node[fill=blue] (v6) at (-0.2+\k,2+\j) {};
\node[fill=orange] (v7) at (-0.6+\k,1+\j) {};
\node (v8) at (-0.2+\k,0+\j) {};

\draw (v0)--(v1)--(v2)--(v3)--(v4)--(v5)--(v6)--(v7)--(v8)--(v0);
}
}

\node[draw=none,fill=blue] (a) at (2.2,2) {};

\node[draw=none,fill=blue] (a) at (-0.2,2) {};

\node[draw=none,fill=blue] (a) at (7.2,2) {};

\node[draw=none,fill=blue] (a) at (9.8,2) {};

\node[draw=none,fill=blue] (a) at (4.8,0) {};

\node[draw=none,fill=blue] (a) at (12.2,0) {};

\node[draw=none,fill=blue] (a) at (4.7,-4) {};
\node[draw=none,fill=blue] (a) at (7.3,-4) {};

 \end{tikzpicture}
    \caption{\label{fig:lists} Case $k=4$. Orange vertices denote a list of some vertex $v\in V_3$, blue vertices denote all possible lists of a neighbor $u$ of $v$ when \ref{red:edges} cannot be applied, i.e., every vertex of $L(u)$ is a neighbor of a vertex of $L(v)$ and every vertex of $L(v)$ is a neighbor of a vertex of $L(u)$.}
    \end{figure}
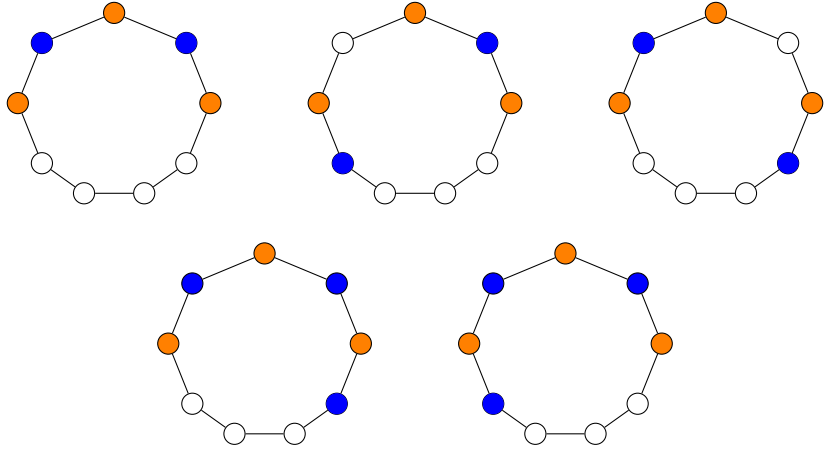
    \end{center}

If for some vertex $v$ with list $\{i-2,i,i+2\}$ there is no $u\in N(v)$ with one of lists $\{i-1,i+1\}$, $\{i-3,i+1\}$, $\{i-1,i+3\}$, then we add such a vertex $u$ to $G$ and make it adjacent to $u$. 
Note that now the diameter of $G$ might increase, but in this lemma we only need $G$ to be connected.
Moreover, any list homomorphism on $G-u$ can be extended to $u$, since each color on $L(v)$ has a neighbor on $L(u)$.
So since now, we can assume that 

\begin{description}
  \item[($\star$)] for a vertex $v$ with list $\{i-2,i,i+2\}$, all lists $\{i-1,i+1\}$, $\{i-3,i+1\}$, $\{i-1,i+3\}$ are present on $N(v)$.
\end{description}
  
\paragraph{Constructing a \textsc{BCSP} instance.} We construct an instance of \textsc{BCSP} as follows. We start with $\BCSP(C_{2k+1},G-V_3,L)$ -- note that in this instance all lists have size at most~$2$. First, for every vertex $v\in V_3$ and for every $v',v''\in V_2\cap N(v)$, we leave in $C(v',v'')$ only such pairs that have a common neighbor in $L(v)$.
Furthermore, for every edge $uv$ with $u,v\in V_3$ such that $L(u)=\{i-2,i,i+2\}$, $L(v)=\{i-1,i+1,i+3\}$, and for every pair $u',v'\in V_2$ such that $uu',vv'\in E(G)$, we remove (if they are present) from $C(u',v')$ the following pairs: $(i-3,i+2)$, $(i-1,i+4)$, and $(i+3,i-2)$.
This completes the construction of the instance $(V,L,C)$ of \textsc{BCSP}. 

Clearly the instance $(V,L,C)$ is constructed in polynomial time. Moreover, since all lists of $(V,L,C)$ have size at most $2$, by \cref{lem:red-2-sat}, this instance can be solved in polynomial time.
    
\paragraph{Correctness.} It remains to show that the instance $(G,L)$ is equivalent to the instance $(V,L,C)$. First suppose that there is a list homomorphism $\vphi: (G,L)\to C_{2k+1}$. Consider the assignment $f=\vphi|_{V_1\cup V_2}$. We have to verify that $f$ satisfies all constraints of the instance $(V,L,C)$. Clearly $f$ satisfies all constraints coming from $\BCSP(C_{2k+1},G-V_3,L)$. Furthermore, since $\vphi$ is a homomorphism from $G$ to $C_{2k+1}$, for every $v\in V_3$ and every pair $v',v''\in N(v)\cap V_2$, we have that $f(v')$ and $f(v'')$ must have a common neighbor in $L(v)$. Finally, we have to verify that $f$ satisfies constraints that were introduced for edges of $G[V_3]$. Suppose that there are $u',v'$ such that $(f(u'),f(v'))$ was removed from $C(u',v')$ for an edge $uv$ of $G[V_3]$ such that $uu',vv'\in E(G)$. Let $L(u)=\{i-2,i,i+2\}$ and $L(v)=\{i-1,i+1,i+3\}$. Suppose first that $f(u')=i-3$ and $f(v')=i+2$. Since $\vphi$ is a list homomorphism and $uu'\in E(G)$, we must have $\vphi(u)=i-2$, and since $uv\in E(G)$ we must have $\vphi(v)=i-1$. However, $i-1$ is non-adjacent to $i+2$ and thus the edge $vv'$ cannot be properly colored, a contradiction. Now suppose that $f(u')=i-1$ and $f(v')=i+4$. Similarly, we must have $\vphi(v)=i+3$ and $\vphi(u)\in \{i-2,i\}$ so the edge $uv$ cannot be properly colored. Finally, suppose that $f(u')=i+3$ and $f(v')=i-2$. Then we must have $\vphi(u)=i+2$ and $\vphi(v)=i-1$ so again $uv$ cannot be properly colored, a contradiction.
Therefore, $f$ satisfies all constraints.

Now suppose that there is a satisfying assignment $f: V\to V(H)$ of $(V,L,C)$. Observe that if we consider $\vphi=f$ on $V_1\cup V_2$, then $\vphi$ is a list homomorphism on $G[V_1\cup V_2]$. It remains to show that $\vphi$ can be extended to vertices of $V_3$. Consider a vertex $v\in V_3$ with list $\{i-2,i,i+2\}$ for some $i\in [2k]_0$. Recall that by ($\star$), $v$ has neighbors with lists $\{i-3,i+1\}$ and $\{i-1,i+3\}$. Since $f$ satisfies the constraints, for every pair of neighbors $v',v''$ of $v$ in $V_2$, $f(v')\cap f(v'')\cap L(v)\neq \emptyset$, and thus by \cref{obs:neigh-2-3}, there is a common neighbor of the colors on $N(v)$ in $L(v)$. Furthermore, we claim that this common neighbor is unique. Indeed, if the neighbor of $v$ with list $\{i-3,i+1\}$ is colored with $i-3$, then we already know that $v$ has to be mapped to $i-2$. Otherwise, $v$ has to be mapped to one of $i,i+2$. If the neighbor of $v$ with list $\{i-1,i+3\}$ is mapped to $i+3$, then $v$ has to be mapped to $i+2$, otherwise, $v$ has to be mapped to $i$. Therefore, we extend $\vphi$ to vertices of $V_3$ in the only possible way. It remains to show that $\vphi$ respects the edges of $V_3$. Suppose not and let $uv$ be an edge which is not properly colored. Let $L(u)=\{i-2,i,i+2\}$ and $L(v)=\{i-1,i+1,i+3\}$. First suppose that $\vphi(u)=i-2$ and $\vphi(v)\in \{i+1,i+3\}$. A neighbor of $u$ with list $\{i-3,i+1\}$ has to be mapped to $i-3$ and a neighbor of $v$ with list $\{i-2,i+2\}$ has to be mapped to $i+2$, but then the mapping $f$ does not satisfy the constraints since we removed the pair $(i-3,i+2)$. So now suppose that $u$ is mapped to $i$ and $v$ is mapped to $i+3$. Then a neighbor of $v$ with list $\{i,i+4\}$ is mapped to $i+4$ and a neighbor of $u$ with list $\{i-1,i+3\}$ is mapped to $i-1$, but the pair $(i-1,i+4)$ was removed. Finally suppose that $u$ is mapped to $i+2$ and $v$ is mapped to $i-1$. Then a neighbor of $u$ with list $\{i-1,i+3\}$ is mapped to $i+3$ and a neighbor of $v$ with list $\{i-2,i+2\}$ is mapped to $i-2$, which is a contradiction because we removed the pair $(i+3,i-2)$.
Therefore $\vphi$ is a list homomorphism, which completes the proof.
 \end{proof}

Now we can prove \cref{lem:leaf-instance}.

\begin{proof}[Proof of \cref{lem:leaf-instance}]
Recall that the instance $(G,L)$ is reduced. 
By \cref{lem:lists-3}, it is enough to show that every vertex of $V_{\geq 3}$ has list of type $(2,2)$. 
First observe that for every vertex $u$ outside the cycle $C$, we have that $L(u)\subseteq \{i-2,i-1,i,i+1,i+2\}$ for some $i\in [2k]_0$. Indeed, by \cref{obs:cycle-dist}, there must be $i\in [2k]_0$ such that $\dist(u,c_i)=\dist(u,c_{i+1})=:\ell\leq \diam(G)=k+2$. 
Similarly as in the proof of \cref{thm:poly}, it holds that $\ell\in \{k+1,k+2\}$, so by \cref{obs:lists-dist-two}, $L(u)\subseteq \{i-2,i-1,i,i+1,i+2\}$, for some $i\in [2k]_0$.

Furthermore, observe that since for the branching rule (B2), if we could, we chose vertex $v$ whose distance from $C$ is at least $\lceil\frac{d}{2}\rceil$, each vertex of $G$ is at distance $\lfloor\frac{k+2}{2}\rfloor$ from $C$. Indeed, every vertex $u'$ that was in $N^{\leq k+2}_{G[V\geq 2]}[v]$ either is already precolored or has a precolored neighbor after guessing the coloring on $N^{\leq k+1}_{G[V\geq 2]}[v]$, and thus each such vertex $u'$ has list of size at most $2$. So for any vertex $u$ that is still in $V_{\geq 3}$, the shortest $u$-$v$ path (whose length is at most the diameter $\diam(G)$) should contain a vertex from $C$ and length of that path is at least $\dist(v,C)+\dist(u,C)$. So either all vertices outside $C$ were at distance at most $\lfloor\frac{k+2}{2}\rfloor$, or $v$ was at distance at least $\lceil\frac{k+2}{2}\rceil$, and thus 
$\dist(u,C)\leq \diam(G) - \dist(v,C) \leq k+2 - \lceil\frac{k+2}{2}\rceil=\lfloor\frac{k+2}{2}\rfloor$, so $u$ is at distance at most $\lfloor\frac{k+2}{2}\rfloor$ from $C$.

For $k=1$, we obtain that every vertex of $G$ is at distance at most $\lfloor\frac{3}{2}\rfloor=1$ from $C$. Therefore, by \cref{obs:lists-dist}, we obtain that every vertex of $G$ has list of size at most $2$, and thus $(G,L)$ can be solved in polynomial time by \cref{lem:red-2-sat}. So since now we can assume that $k\geq 2$.
If $k>2$, then $\lfloor\frac{k+2}{2}\rfloor<k$. Recall that by \cref{obs:lists-dist}, for a vertex $u$ that is at distance at most $k-1$ from a precolored vertex (and all vertices of $C$ are precolored), the set $L(u)$ is an independent set. Combining it with the fact that each list of a vertex in $V_{\geq 3}$ is contained in $\{i-2,i-1,i,i+1,i+2\}$, for some $i\in [2k]_0$, we obtain that for every $u\in V_{\geq 3}$ we have $L(u)\subseteq \{i-2,i,i+2\}$, for some $i\in [2k]_0$.
If $k=2$, then each vertex is at distance at most $2$ from $C$, and by \cref{obs:lists-dist}, we obtain that for every $u\in V_{\geq 3}$, the list $L(u)$ is of type $(2,2)$, i.e., $L(u)=\{i-2,i,i+2\}$ for some $i\in [2k]_0$.

Therefore, every vertex of $V_{\geq 3}$ has list of type $(2,2)$, and thus, by \cref{lem:lists-3}, the instance $(G,L)$ can be solved in polynomial time, which completes the proof.
\end{proof}

Now we are ready to prove \cref{thm:subexp}.

\begin{proof}[Proof of \cref{thm:subexp}]
Let $(G,L)$ be an instance of $\lhomo{C_{2k+1}}$ such that the diameter of $G$ is at most $k+2$. As in \cref{thm:poly}, first for every $i\in [2k]_0$, we check in polynomial time whether there is a list homomorphism $\vphi:(G,L)\to C_{2k+1}$ such that no vertex is mapped to $i$ -- recall that this can be done by \cref{lem:paths-poly}. If there is no such list homomorphism, we guess $2k+1$ vertices $c_0,\ldots,c_{2k}$ which will be colored so that $c_i$ is mapped to $i$. We add the edges $c_ic_{i+1}$ and we obtain an induced $(2k+1)$-cycle $C$ (if not, then we are dealing with a no-instance). Note that adding edges cannot increase the diameter and since the edges are added between vertices precolored with consecutive vertices, we obtain an equivalent instance.

Now for $(G,L)$ and $C$ as the fixed precolored $(2k+1)$-cycle we use the algorithm \texttt{Recursion Tree}, which by \cref{lem:recursion-tree} in time $2^{\Oh((n\log{n})^{\frac{k+1}{k+2}})}$ returns a tree $\cR$. Moreover, in order to solve the instance $(G,L)$ it is enough to solve every instance corresponding to a leaf of $\cR$ by \cref{lem:recursion-tree}, and
by \cref{lem:leaf-instance}, we can solve each such instance in polynomial time. Furthermore, since the size of $\cR$ is bounded by the running time, the instance $(G,L)$ can be solved in time $\exp\left({\Oh((n\log{n})^{\frac{k+1}{k+2}})}\right)\cdot n^{\Oh(1)}=\exp\left({\Oh((n\log{n})^{\frac{k+1}{k+2}})}\right)$, which completes the proof.
\end{proof}

We finish this section with a result which shows that it is sometimes possible to have a subexponential-time algorithm for $\lhomo{C_{2k+1}}$ for diameter-$(k+3)$ graphs, i.e., we show that $\lhomo{C_5}$ can be solved in polynomial time on diameter-$5$ graphs. It is not clear if this can be generalized to other values of $k$ -- here we use the fact that after standard branchings, every vertex is at distance at most $\lfloor\frac{k+3}{2}\rfloor$, which for $k=2$ is $2$, from a precolored vertex. By \cref{obs:lists-dist}, the list of every vertex is either of size at most two or of type $(2,2)$, and thus the instance can be solved in polynomial time by \cref{lem:lists-3}. For all larger values of $k$, we have $\lfloor\frac{k+3}{2}\rfloor\geq 3$, and thus the same argument does not work.

\begin{theorem}\label{thm:c5}
Every diameter-5 $n$-vertex instance $(G,L)$ of $\lhomo{C_5}$ can be solved in time $\Oh\left(\exp\left((n\log{n})^{\frac{4}{5}}\right)\right)$.
\end{theorem}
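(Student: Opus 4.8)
The plan is to follow the same branching-plus-reduction strategy used for \cref{thm:subexp}, but now applied with $d=5$ (rather than $d=k+2=4$) for the specific case $k=2$. First I would preprocess exactly as in the proof of \cref{thm:subexp}: for each color $i\in[4]_0$ use \cref{lem:paths-poly} to check in polynomial time whether there is a list homomorphism avoiding $i$ (i.e.\ a homomorphism to $P_5$); if some color can be avoided we are done, and otherwise all $5$ colors must be used, so we guess a $5$-tuple $c_0,\ldots,c_4$ of vertices precolored with the corresponding colors, add the (at most $5$) edges $c_ic_{i+1}$ so that $C=c_0\ldots c_4$ becomes an induced $C_5$, and check consistency. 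Adding these edges between vertices precolored with consecutive colors is safe and does not increase the diameter, so we still have a diameter-$5$ instance with a fixed precolored $5$-cycle $C$.

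Next I would run the algorithm \texttt{Recursion Tree} with parameter $d=5$. By \cref{lem:recursion-tree} (stated for arbitrary $d\geq\diam(G)$, which is exactly why that lemma was phrased generally) this runs in time $\exp\bigl(\Oh((n\log n)^{4/5})\bigr)$ and produces a recursion tree $\cR$ such that $(G,L)$ is a yes-instance iff some leaf instance is. The branching rule (B1) shrinks lists at high-degree vertices, and (B2) is applied at a leaf to a vertex $v$ chosen, when possible, so that $\dist(v,C)\geq\lceil d/2\rceil=3$; at a leaf we have guessed the coloring of $N^{\leq d-1}_{G[V_{\geq2}]}[v]=N^{\leq 4}_{G[V_{\geq2}]}[v]$.

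The heart of the proof is the leaf analysis, which replaces \cref{lem:leaf-instance}. Here I would argue exactly as in that proof but with the better distance bound afforded by $d=5$ and $k=2$: every vertex $u$ outside $C$ satisfies $L(u)\subseteq\{i-2,i-1,i,i+1,i+2\}$ for some $i$ (via \cref{obs:cycle-dist}, the observation that the common distance $\ell$ lies in $\{k+1,\ldots,d\}$ since a shorter equal-distance pair would create a short odd cycle contradicting (R1)/(R2), and \cref{obs:lists-dist-two}); and, because (B2) picked $v$ with $\dist(v,C)\geq\lceil 5/2\rceil=3$ whenever possible, every vertex still in $V_{\geq3}$ is at distance at most $\diam(G)-\dist(v,C)\leq 5-3=2$ from $C$ (the degenerate case where no such $v$ exists forces all vertices within distance $\lfloor 5/2\rfloor=2$ of $C$ anyway). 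With $k=2$, distance $\leq 2$ from a precolored vertex together with the containment in a length-$5$ window forces, by \cref{obs:lists-dist}, that every list is either of size at most $2$ or of type $(2,2)$, namely $\{i-2,i,i+2\}$. Then \cref{lem:lists-3} solves each leaf instance in polynomial time, and since the number of leaves is bounded by the running time of \texttt{Recursion Tree}, the total running time is $\exp\bigl(\Oh((n\log n)^{4/5})\bigr)\cdot n^{\Oh(1)}=\Oh\bigl(\exp((n\log n)^{4/5})\bigr)$.

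I expect the only real subtlety — and the reason this does not extend to larger $k$ — to be the distance bookkeeping at the leaves: one must verify that with $d=k+3$ the surviving $V_{\geq3}$-vertices are within distance $\lfloor(k+3)/2\rfloor$ of $C$, and that $\lfloor(k+3)/2\rfloor\leq 2$ holds precisely when $k=2$, so that \cref{obs:lists-dist}'s ``independent set / type $(2,2)$'' conclusion applies. For $k\geq3$ this bound is $\geq3$ and lists of type $(2,2,1)$ or worse can survive, which \cref{lem:lists-3} does not handle; hence the restriction to $C_5$ and diameter $5$.
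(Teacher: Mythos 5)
Your proposal is correct and follows essentially the same route as the paper: preprocess and fix a precolored $C_5$, run \texttt{Recursion Tree} with $d=5$, and observe that at each leaf every vertex of $V_{\geq 3}$ lies within distance $\lfloor 5/2\rfloor=2$ of $C$, so by \cref{obs:lists-dist} its list is of type $(2,2)$ and \cref{lem:lists-3} finishes in polynomial time. The intermediate claim that lists of vertices outside $C$ sit in a window of five colors is not needed (and for $C_5$ is vacuous); the distance-$2$-from-$C$ argument alone carries the leaf analysis, exactly as in the paper.
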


\begin{proof}
Let $(G,L)$ be an $n$-vertex instance of $\lhomo{C_{5}}$ such that $\diam(G)\leq 6$. First, as in \cref{thm:subexp}, for every $i\in \{0,1,2,3,4\}$, by applying \cref{lem:paths-poly}, we check in polynomial time whether there is a list homomorphism $\vphi:(G,L)\to C_5$ such that no vertex is mapped to $i$. If no, we guess $5$ vertices $c_0,c_1,c_2,c_3,c_4$ which will be colored so that $c_i$ is mapped to $i$. We add the edges $c_ic_{i+1}$ and we obtain an induced $5$-cycle $C$ (if not, then we are dealing with a no-instance). 

We use the algorithm \texttt{Recursion Tree} for $(G,L)$ and $C$, which by \cref{lem:recursion-tree} in time $2^{\Oh((n\log{n})^{\frac{4}{5}})}$ returns a tree $\cR$  such that in order to solve the instance $(G,L)$ it is enough to solve every instance corresponding to a leaf of $\cR$.

Let $(G',L')$ be an instance corresponding to a leaf in $\cR$. As in the proof of \cref{lem:leaf-instance}, since in (B2), if we could, we chose a vertex whose distance from $C$ is at least $\lceil \frac{5}{2}\rceil$, each vertex of $G'$ is at distance at most $\lfloor \frac{5}{2}\rfloor=2$ from $C$. By \cref{obs:lists-dist}, each vertex of $V_{\geq 3}$ has list of type $(2,2)$, and thus by \cref{lem:lists-3}, $(G',L')$ can be solved in polynomial time. This completes the proof.
\end{proof}

\section{Beyond odd cycles}
In this section we consider target graphs other than odd cycles. Instead, we focus on input graphs with diameter at most $2$. Since homomorphisms preserve edges, for graphs $G,H$, a homomorphism $\vphi: G\to H$ and a sequence of vertices $v_1,\ldots,v_k$ forming a path in $G$, the sequence $\vphi(v_1),\ldots,\vphi(v_k)$ forms a walk in $H$. Therefore, if $G$ has diameter at most $2$, we can assume that the target graph $H$ has also diameter at most $2$. The following observation is straightforward.

\begin{observation}\label{obs:diam-G-H}
    Let $G,H$ be graphs such that $G$ is connected. If there exists a homomorphism $\vphi: G \to H$, then the image $\vphi(V(G))$ induces in $H$ a subgraph with diameter at most $\diam(G)$.
\end{observation}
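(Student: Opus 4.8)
The plan is to transport a short path in $G$ to a short walk in $H$, and then observe that a short walk between two vertices already witnesses that they are at short distance. Concretely, take any two vertices $x, y$ in the image $\vphi(V(G))$, say $x = \vphi(u)$ and $y = \vphi(v)$ for some $u, v \in V(G)$. Since $G$ is connected and has diameter at most $\diam(G)$, there is a $u$--$v$ path $u = p_0, p_1, \ldots, p_\ell = v$ in $G$ with $\ell \le \diam(G)$. First I would note that, because $\vphi$ is edge-preserving, the sequence $\vphi(p_0), \vphi(p_1), \ldots, \vphi(p_\ell)$ is a walk in $H$ from $x$ to $y$ of length $\ell$ (consecutive images are either equal — impossible here since $H$ is loopless, but that does not matter — or adjacent in $H$).

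The next step is the routine fact that the existence of an $x$--$y$ walk of length $\ell$ in $H$ implies $\dist_H(x,y) \le \ell$: a walk contains a path between its endpoints, obtained by short-cutting repeated vertices, so the shortest-path distance is at most the walk length. Hence $\dist_H(x,y) \le \ell \le \diam(G)$. Since $x$ and $y$ were arbitrary vertices of the image, every pair of vertices in the subgraph of $H$ induced by $\vphi(V(G))$ is joined by a path of length at most $\diam(G)$ — and in fact this path can be taken inside the induced subgraph, since the short-cutting of the walk $\vphi(p_0), \ldots, \vphi(p_\ell)$ only uses vertices among $\vphi(p_0), \ldots, \vphi(p_\ell) \subseteq \vphi(V(G))$. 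Therefore the induced subgraph has diameter at most $\diam(G)$.

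There is essentially no obstacle here; the only point that requires a moment's care is that the bound must hold for distances measured \emph{within} the induced subgraph $H[\vphi(V(G))]$, not merely within $H$ — but this is automatic because the connecting walk already lies entirely inside $\vphi(V(G))$, so the short-cut path does too. In the application to the paper, $\diam(G) \le 2$ gives that $H[\vphi(V(G))]$ has diameter at most $2$, which justifies restricting attention to diameter-$2$ target graphs $H$ when the input $G$ has diameter at most $2$.
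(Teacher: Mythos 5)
Your proof is correct and follows exactly the route the paper sketches: a shortest $u$--$v$ path in $G$ maps under $\vphi$ to a walk of the same length in $H$ lying entirely inside $\vphi(V(G))$, which after short-cutting yields a path of at most that length within the induced subgraph. The paper treats this as immediate from its preceding remark that homomorphisms send paths to walks; your write-up just makes the short-cutting and the ``path stays inside the image'' points explicit.
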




Now we prove \cref{thm:triangle-free}.


\thmtriangle*

\begin{proof}
    Let $(G,L)$ be an instance of $\lhomo{H}$. 
    We guess the set of colors that will be used -- by \cref{obs:diam-G-H} they should induce a diameter-$2$ subgraph $H'$ of $H$. For each such $H'$, we guess $h'=|H'|$ vertices $v_1,\ldots,v_{h'}$ of $G$ that will be injectively mapped to $V(H')=\{x_1,\ldots,x_{h'}\}$.
For each tuple $(H',v_1,\ldots,v_{h'})$ such that $x_i\in L(v_i)$ for $i\in [h']$, we solve the instance $(G,L')$ of $\lhomo{H'}$, where $L'(v)=\{x_i\}$ for $v=v_i$, $i\in [h']$ and $L'(v)=L(v)$ otherwise. Note that $(G,L)$ is a yes-instance if and only if at least one instance $(G,L')$ is a yes-instance.

 First, for every edge $x_ix_j\in E(H')$, if $v_i,v_j$ are non-adjacent, we add the edge $v_iv_j$ to $G$ -- note that this operation is safe, since we cannot increase the diameter by adding edges and we only add edges between vertices that must be mapped to neighbors in $H'$. Therefore, we can assume that the set $V'=\{v_1,\ldots,v_{h'}\}$ induces a copy of $H'$ in $G$ (if not, then we have an extra edge, which means that we are dealing with a no-instance and we reject immediately). Furthermore, we exhaustively apply the reduction rules.

So from now on we assume that the instance $(G,L')$ is reduced. We claim that either $(G,L')$ is a no-instance or $V(G)=\{v_1,\ldots,v_{h'}\}$, i.e., after exhaustive application of the reduction rules, the graph $G$ is isomorphic to $H'$. Note that in the latter case we can return YES as an answer.

Suppose there is $v\in V(G)\setminus V'$.
Moreover, we choose such $v$ which is adjacent to some vertex of $V'$ (see \cref{fig:triangle-free}).
Suppose that there exists $\vphi: (G,L')\to H'$ and let $x_i=\vphi(v)$.
Then $v$ cannot be adjacent to $v_i$ since there are no loops in $H'$.
    Furthermore, the only neighbors of $v$ in $V'$ can be the neighbors of $v_i$.
    Suppose that there is $v_j\in N_G(v_i)\cap V'$ which is non-adjacent to $v$. 
    Since the diameter of $G$ is at most $2$, then there must be $u\in N_G(v)\cap N_G(v_j)$. 
    Observe that $u\notin V'$. 
    Indeed, $v$ does not have any neighbors in $V'\setminus N_G(v_i)$ and if $u\in N_G(v)$, then there is a triangle $uv_iv_j$ in a copy of $H'$, a contradiction. 
    Furthermore, it must hold that $\vphi(u)$ is adjacent to $x_i$ in $H'$ as $u$ is adjacent to $v$ and $\vphi(v)=x_i$, and similarly, $\vphi(u)$ must be adjacent to $x_j$ as $u$ is adjacent to $v_j$. 
    Then $\vphi(u)x_ix_j$ forms a triangle in $H'$, a contradiction. Thus $v$ must be adjacent to all vertices of $N(v_i)\cap V'$.

\begin{center}
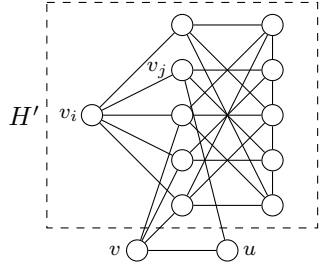
\begin{figure}[h]
    \centering
    \begin{tikzpicture}[every node/.style={draw,circle,fill=white,inner sep=0pt,minimum size=8pt},every loop/.style={},scale=0.6]
    \node[label=left:\footnotesize{$v_i$}] (vi) at (0,0) {}; 
    \node (v1) at (2,2) {};
    \node[label=left:\footnotesize{$v_j$}] (v2) at (2,1) {};
    \node (v3) at (2,0) {};
    \node (v4) at (2,-1) {};
    \node (v5) at (2,-2) {};
    \node (u1) at (4,2) {};
    \node (u2) at (4,1) {};
    \node (u3) at (4,0) {};
    \node (u4) at (4,-1) {};
    \node (u5) at (4,-2) {};
    \node[label=left:\footnotesize{$v$}] (v) at (1,-3) {};
    \node[label=right:\footnotesize{$u$}] (u) at (3,-3) {};
    \foreach \k in {1,2,3,4,5}
    {
    \draw (vi)--(v\k);
    }
    \foreach \k in {3,4,5}
    {
    \draw (v)--(v\k);
    }
    \draw (u)--(v2);
    \draw (u)--(v);

    \draw (v1)--(u1);
    \draw (v2)--(u2);
    \draw (u1)--(u2);
    \draw (v3)--(u3);
    \draw (u3)--(u4)--(u5)--(v5)--(u3);
    \draw (u2)--(u3);
    \draw (u5)--(v1)--(u3);
    \draw (u1)--(v3)--(u5);
    \draw (v4)--(u4);
    \draw (v2)--(u4);
    \draw (u2)--(v4);
    \draw (u1)--(v5);
    \draw[dashed] (-1,2.5)--(5,2.5)--(5,-2.5)--(-1,-2.5)--(-1,2.5);
    \node[draw=none, fill=none] (h) at (-1.5,0) {$H'$};

    \end{tikzpicture}
    \caption{\label{fig:triangle-free} The copy of $H'$ in $G$ and a vertex $v$ such that for some homomorphism $\vphi$, it holds $\vphi(v_i)=\vphi(v)$. We show that a vertex $u$ which is a common neighbor of $v$ and some neighbor $v_j$ of $v_i$ in the copy of $H'$ cannot exist.}
    \end{figure}
    \end{center}

Since \ref{red:edges} cannot be applied, each vertex of $L'(v)$ is adjacent to all vertices of $N_H(x_i)$.    
Moreover, since \ref{red:comp} cannot be applied, it holds that $L'(v)=\{x_i\}$.
Indeed, otherwise there is $x_{i'}\neq x_i$ such that $x_{i'}\in L'(v)$. Recall that $x_{i'}$ is adjacent to all vertices of $N_H(x_i)$. Therefore, $N_H(x_i)\subseteq N_H(x_{i'})$, and thus one of $x_{i},x_{i'}$ should have been removed from $L'(v)$ by \ref{red:comp}. 
Furthermore, since \ref{red:identify} cannot be applied, we must have $v=v_i\in V'$, a contradiction.
  This completes the proof.
\end{proof}

\section{Lower bound}

In this section we prove \cref{thm:hardness}.

\thmhardness*

\begin{proof}
   We reduce from $3$-\textsc{Sat}. Let $\phi$ be an instance with $n$ variables $x_1,\ldots,x_n$ and $m$ clauses $\gamma_1,\ldots, \gamma_m$. We construct $G$ as follows.

    We start with a $(2k+1)$-cycle $C$ on vertices $\{v_0,\ldots,v_{2k}\}$. For every clause $\gamma_j$ we add a copy of $C_{2k+1}$ on vertices $a_j,b_j,c^1_j,c^2_j,\ldots,c^{2k-1}_j$ and we add edges $v_1a_j, b_jv_2$. For every variable $x_i$, we add a copy of $C_{2k+1}$ with vertices $x_i^0,\ldots,x_i^{2k}$ and identify $x_i^0$ with $v_0$.

    For every clause $\gamma_j$, we fix ordering of its variables. Furthermore, for every variable $x_i$ of $\gamma_j$ we add a path $P_{ij}$ as follows. 
    \begin{enumerate}
        \item If $x_i$ is the first variable of $\gamma_j$ we add a path $P_{ij}$ on $2k+1$ vertices, identify its first vertex with $a_j$. Furthermore, for $\ell=2,\ldots,2k$, we make the $\ell$-th vertex of the path adjacent to $v_{\ell}$. Finally, if the occurence of $x_i$ in $C_j$ is positive, then we identify the $(2k+1)$th vertex of the path with $x_i^{2k}$. Otherwise, we identify the $(2k+1)$th vertex with $x_i^1$.
        \item If $x_i$ is the second variable of $\gamma_j$, then we add a path $P_{ij}$ on three vertices, identify the first vertex with $b_j$, and make the second vertex adjacent to $v_1$. Finally, if the occurence of $x_i$ in $C_j$ is positive we identify the third vertex of the path with $x_i^1$. Otherwise, we identify it with $x_i^{2k}$.
        \item If $x_i$ is the third variable of $\gamma_j$, then we add a path $P_{ij}$ on $k+2$ vertices, identify the first one with $c^{k}_j$, and make the $(k+1)$-th one adjacent to $v_1$. Finally, if the occurence of $x_i$ in $C_j$ is positive, then we identify the last vertex of the path with $x_i^{1}$, and otherwise, we identify it with $x_i^{2k}$.
    \end{enumerate}

    This completes the construction of $G$ (see \cref{fig:lower}). Note that $|V(G)|=\Oh(n+m)$. We first prove that $\phi$ is satisfiable if and only if $G\to H$.

    \begin{center}
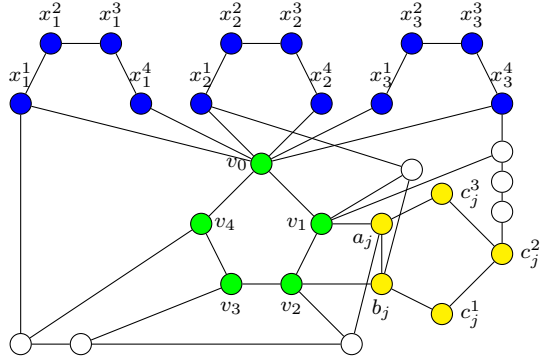
\begin{figure}[h]
    \centering
    \begin{tikzpicture}[every node/.style={draw,circle,fill=white,inner sep=0pt,minimum size=8pt},every loop/.style={},scale=0.8]
    \node[fill=green,label=left:\footnotesize{$v_0$}] (v0) at (0,0) {};
    \node[fill=green,label=left:\footnotesize{$v_1$}] (v1) at (1,-1) {};
    \node[fill=green,label=below:\footnotesize{$v_2$}] (v2) at (0.5,-2) {};
    \node[fill=green,label=below:\footnotesize{$v_3$}] (v3) at (-0.5,-2) {};
    \node[fill=green,label=right:\footnotesize{$v_4$}] (v4) at (-1,-1) {};
    \draw (v0)--(v1)--(v2)--(v3)--(v4)--(v0);
    \node[fill=yellow] (aj) at (2,-1) {};
    \node[draw=none,fill=none] (t) at (1.7,-1.3) {\footnotesize{$a_j$}};
    \node[fill=yellow,label=below:\footnotesize{$b_j$}] (bj) at (2,-2) {};
    \draw (v1)--(aj)--(bj)--(v2);
    \node[fill=yellow,label=right:\footnotesize{$c_j^1$}] (c1) at (3,-2.5) {};
    \node[fill=yellow,label=right:\footnotesize{$c_j^2$}] (c2) at (4,-1.5) {};
    \node[fill=yellow,label=right:\footnotesize{$c_j^3$}] (c3) at (3,-0.5) {};
    \draw (bj)--(c1)--(c2)--(c3)--(aj);
    \node[fill=blue,label=above:\footnotesize{$x_2^1$}] (x11) at (-1,1) {};
    \node[fill=blue,label=above:\footnotesize{$x_2^2$}] (x12) at (-0.5,2) {};
    \node[fill=blue,label=above:\footnotesize{$x_2^3$}] (x13) at (0.5,2) {};
    \node[fill=blue,label=above:\footnotesize{$x_2^4$}] (x14) at (1,1) {};
    
    \node[fill=blue,label=above:\footnotesize{$x_1^1$}] (x21) at (-4,1) {};
    \node[fill=blue,label=above:\footnotesize{$x_1^2$}] (x22) at (-3.5,2) {};
    \node[fill=blue,label=above:\footnotesize{$x_1^3$}] (x23) at (-2.5,2) {};
    \node[fill=blue,label=above:\footnotesize{$x_1^4$}] (x24) at (-2,1) {};
    \node[fill=blue,label=above:\footnotesize{$x_3^1$}] (x31) at (2,1) {};
    \node[fill=blue,label=above:\footnotesize{$x_3^2$}] (x32) at (2.5,2) {};
    \node[fill=blue,label=above:\footnotesize{$x_3^3$}] (x33) at (3.5,2) {};
    \node[fill=blue,label=above:\footnotesize{$x_3^4$}] (x34) at (4,1) {};
    \draw (v0)--(x11)--(x12)--(x13)--(x14)--(v0);
    \draw (v0)--(x21)--(x22)--(x23)--(x24)--(v0);
    \draw (v0)--(x31)--(x32)--(x33)--(x34)--(v0);

    \node (p11) at (1.5,-3) {};
    \node (p12) at (-3,-3) {};
    \node (p13) at (-4,-3) {};
    \draw (aj)--(p11)--(p12)--(p13)--(x21);
    \draw (p11)--(v2);
    \draw (p12)--(v3);
    \draw (p13)--(v4);
    \node (p21) at (2.5,-0.1) {};
    \draw (bj)--(p21)--(x11);
    \draw (p21)--(v1);
    \node (p31) at (4,-0.8) {};
    \node (p32) at (4,-0.3) {};
    \node (p33) at (4,0.2) {};
    \draw (c2)--(p31)--(p32)--(p33)--(x34);
    \draw (p33)--(v1);

 \end{tikzpicture}
    \caption{\label{fig:lower} Construction of $G$ for $k=2$ and clause $\gamma_j=(\neg x_1\lor x_2 \lor \neg x_3)$. Green vertices belong to the cycle $C$, blue vertices are those introduced for variables, and yellow ones are those introduced for the clause $\gamma_j$. Remaining vertices belong to paths $P_{ij}$.}
    \end{figure}
    \end{center}
    
\setcounter{theorem}{3}
     \begin{claim}
       If $G\to C_{2k+1}$, then $\phi$ is satisfiable.
    \end{claim}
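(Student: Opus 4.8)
The plan is to start from a fixed homomorphism $\vphi: G \to C_{2k+1}$ and read off a truth assignment from the restriction of $\vphi$ to the variable gadgets. Since all vertices of the ``central'' cycle $C$ together with the variable cycles map to consecutive vertices of $C_{2k+1}$, I would first normalize coordinates: the copy of $C_{2k+1}$ on $v_0,\ldots,v_{2k}$ must be mapped by $\vphi$ to all of $C_{2k+1}$, so after composing with an automorphism of $C_{2k+1}$ we may assume $\vphi(v_\ell)=\ell$ for all $\ell\in[2k]_0$. For each variable $x_i$, the cycle $x_i^0=v_0,x_i^1,\ldots,x_i^{2k}$ is then forced to be mapped either ``clockwise'' ($\vphi(x_i^\ell)=\ell$, in particular $\vphi(x_i^1)=1$, $\vphi(x_i^{2k})=2k$) or ``counterclockwise'' ($\vphi(x_i^\ell)=-\ell$, in particular $\vphi(x_i^1)=2k$, $\vphi(x_i^{2k})=1$), since $v_0$ is shared and a $(2k+1)$-cycle admits exactly these two homomorphisms to $C_{2k+1}$ extending a fixed image of one vertex. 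I would define $x_i$ to be \emph{true} exactly when it is mapped clockwise, i.e.\ when $\vphi(x_i^1)=1$, so that a literal that occurs positively (resp.\ negatively) in some clause ``connects'' via its path to the endpoint $x_i^{2k}$ with color $2k$ (resp.\ to $x_i^1$ with color $1$) precisely when the literal is satisfied.

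Next I would verify that every clause is satisfied. Fix a clause $\gamma_j$. Its gadget is a copy of $C_{2k+1}$ on $a_j,b_j,c_j^1,\ldots,c_j^{2k-1}$ with $v_1a_j$ and $b_jv_2\in E(G)$. Because $\vphi(v_1)=1$ and $\vphi(v_2)=2$, we have $\vphi(a_j)\in\{0,2\}$ and $\vphi(b_j)\in\{1,3\}$, and since $a_jb_j$ is an edge of the clause cycle, the only way to close that odd cycle consistently is $\vphi(a_j)=2,\vphi(b_j)=1$ or $\vphi(a_j)=0,\vphi(b_j)=1$ or $\vphi(a_j)=2,\vphi(b_j)=3$ (the precise case analysis walks around $c_j^1,\ldots,c_j^{2k-1}$, using \cref{obs:cycle-dist} / the rigidity of odd cycles). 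The key point is that the three attachment paths $P_{ij}$ — the long one from $a_j$ threaded along $v_2,\ldots,v_{2k}$, the short one of length $2$ from $b_j$ through a neighbor of $v_1$, and the one of length $k+1$ from $c_j^k$ through a neighbor of $v_1$ — each impose a parity/length constraint that, given the forced colors on $C$ and on $a_j,b_j,c_j^k$, determines which color ($1$ or $2k$) the far endpoint $x_i^1$ or $x_i^{2k}$ must receive, and hence forces the corresponding literal to be true whenever $\vphi(a_j),\vphi(b_j),\vphi(c_j^k)$ all took their ``unsatisfied'' values. I would argue the contrapositive: if all three literals of $\gamma_j$ were false under the assignment, then chasing the colors along the three paths forces $\vphi(a_j),\vphi(b_j),\vphi(c_j^k)$ simultaneously into values that cannot be completed to a proper coloring of the clause cycle $C_{2k+1}$, a contradiction. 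Therefore at least one literal of $\gamma_j$ is true, so $\phi$ is satisfiable.

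The routine parts are: (i) the rigidity statement that a homomorphism $C_{2k+1}\to C_{2k+1}$ fixing one vertex's image is one of two rotations/reflections; (ii) propagating colors along the paths $P_{ij}$ using the fact that adjacent vertices get consecutive colors and that a path of length $t$ attached at a vertex of known color and running alongside $v_1,\ldots$ with the prescribed extra adjacencies forces the color of its other endpoint; and (iii) the small finite case analysis of the clause cycle. The main obstacle — the step I expect to need the most care — is (ii) combined with the clause cycle analysis: one must check that the three path lengths ($2k+1$, $3$, and $k+2$) and their attachment points were chosen so that the ``all literals false'' configuration really is the unique obstruction, i.e.\ that each satisfied literal genuinely frees up one of the three colors needed to close the clause cycle, and that no spurious short odd cycle was created (which would make $G\not\to C_{2k+1}$ regardless of $\phi$, breaking the reduction in the other direction — though for this claim only the stated implication is needed). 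I would organize (ii) as a short lemma: ``for a path $u_0u_1\cdots u_t$ in $G$ with each $u_\ell$ adjacent to a vertex of $C$ colored $c_\ell$ (for $\ell$ in a prescribed range), if $\vphi(u_0)$ is known then $\vphi(u_t)$ is determined,'' proved by induction on $t$ using only that $\vphi$ is edge-preserving and the colors on $C$ are fixed. With that lemma in hand, the three clause-path computations become bookkeeping, and the claim follows.
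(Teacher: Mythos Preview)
Your proposal is correct and follows essentially the same approach as the paper: normalize so that $\vphi(v_\ell)=\ell$, read the truth assignment from the two possible orientations of each variable cycle, observe that $(\vphi(a_j),\vphi(b_j))\in\{(0,1),(2,3),(2,1)\}$, and in each case trace the appropriate path $P_{ij}$ to force one literal of $\gamma_j$ to be true (the paper argues directly by cases on $(\vphi(a_j),\vphi(b_j))$ rather than via your contrapositive, but the logic is identical). One minor correction to your bookkeeping: the endpoint convention is not uniform across positions --- position~1 attaches a positive literal to $x_i^{2k}$ while positions~2 and~3 attach it to $x_i^1$ --- so your parenthetical about which endpoint carries which color needs adjusting per position, though this does not affect the argument.
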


    \begin{claimproof}
        First suppose that there exists $\vphi: G\to C_{2k+1}$ and without loss of generality, assume that $\vphi(v_i)=i$ for $i\in [2k]_0$. Note that for every variable, there are exactly two ways of coloring its corresponding copy of $C_{2k+1}$, i.e., one with $\vphi(x_i^{2k})=2k$, and the other with $\vphi(x_i^{2k})=1$. We define a truth assignment $\psi$ of the variables, so that $x_i$ is true if and only if $\vphi(x_i^{2k})=2k$.

        Let us verify that $\psi$ is a satisfying assignment of $\phi$. Consider a clause $\gamma_j$ and its corresponding copy of $C_{2k+1}$. Observe that $a_j$ is adjacent to vertex precolored with $1$ and $b_j$ is adjacent to a vertex precolored with $2$. Therefore, the pair $(a_j,b_j)$ must be colored in one of three ways: $(0,1), (2,3), (2,1)$. First assume that $(a_j,b_j)$ is colored with $(0,1)$ and let $x_i$ be the first variable of $C_j$. 
        Recall that $P_{ij}$ is a path on $2k+1$ vertices with $a_j$ as the first vertex. Moreover, for every $\ell=2,\ldots,2k$, the $\ell$th vertex of $P_{ij}$ is adjacent to $v_{\ell}$ and we have $\vphi(v_{\ell})=\ell$. Thus $\ell$th vertex of $P_{ij}$ has to be mapped to one of $\{\ell-1,\ell+1\}$.
        Furthermore, $\vphi(a_j)=0$, so the second vertex of $P_{ij}$ has to be mapped to $1$. Then $\ell$th vertex has to be mapped to $\ell-1$, and thus the last vertex has to be mapped to $2k$.
        Recall that the last vertex of the path $P_{ij}$ is $x_i^{2k}$ if the occurence of $x_i$ is positive and $x_i^{1}$, otherwise. In both cases by the definition of the truth assignment $\psi$, $x_i$ satisfies $C_j$.

        Now assume that the pair $(a_j,b_j)$ is colored with $(2,3)$ and let $x_i$ be the second variable of $C_j$. Then observe that the consecutive vertices of $P_{ij}$ must be colored with $3,2,1$, respectively. Recall that the last vertex of $P_{ij}$ is $x_i^1$ if the occurence of $x_i$ is positive and $x_i^{2k}$, otherwise. In both cases by the definition of truth assignment $\psi$, $x_i$ satisfies $C_j$.

        Finally assume that the pair $(a_j,b_j)$ is colored with $(2,1)$ and let $x_i$ be the third variable of $C_j$. Observe that in this case $c_j^k$, which is also the first vertex of $P_{ij}$, must be colored with $k+2$. Since the $(k+1)$th vertex of $P_{ij}$ is adjacent to $v_1$ colored with $1$, it can be colored only with one of $0,2$. However, if it is colored with $0$, then there must be a walk in $C_{2k+1}$ from $0$ to $k+2$ of length exactly $k$, a contradiction. Therefore, the last vertex of $P_{ij}$ must be colored with $1$. As in the previous cases, $x_i$ satisfies $C_j$.
    \end{claimproof}

    \begin{claim}
        If $\phi$ is satisfiable, then $G\to C_{2k+1}$.
    \end{claim}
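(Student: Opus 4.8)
The plan is to reverse the argument of the previous claim. Fix a satisfying assignment $\psi$ of $\phi$. I set $\vphi(v_i)=i$ for all $i\in[2k]_0$, and for each variable $x_i$ I colour its cycle $x_i^0,\dots,x_i^{2k}$ by $\vphi(x_i^\ell)=\ell$ if $\psi(x_i)$ is true and by $\vphi(x_i^\ell)=-\ell$ if $\psi(x_i)$ is false; this agrees with $x_i^0=v_0$ and is one of the two legal colourings of that cycle, so $\vphi(x_i^1),\vphi(x_i^{2k})\in\{1,2k\}$. Checking the two sign cases against the construction, the endpoint of $P_{ij}$ lying in the variable gadget gets colour $1$ exactly when the corresponding literal of $\gamma_j$ is satisfied by $\psi$, provided $x_i$ is the second or third variable of $\gamma_j$; if $x_i$ is the first variable, that endpoint gets colour $2k$ exactly when the literal is satisfied.

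For each clause $\gamma_j$ I pick one literal of $\gamma_j$ satisfied by $\psi$ and let $s_j\in\{1,2,3\}$ be the position of its variable in the fixed ordering. I colour $(a_j,b_j)$ with $(0,1)$, $(2,3)$, or $(2,1)$ according to whether $s_j$ is $1$, $2$, or $3$; in each case these are adjacent colours of $C_{2k+1}$ respecting the edges $v_1a_j$ and $b_jv_2$. Since the path $b_j,c_j^1,\dots,c_j^{2k-1},a_j$ has exactly $2k$ edges and the only walk of length $2k$ between two adjacent vertices of $C_{2k+1}$ is the one avoiding the edge between them, this forces the colouring of the whole clause cycle, and a direct computation gives $\vphi(c^k_j)=k+1$, $k+3$, $k+2$ for $s_j=1,2,3$, respectively.

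It remains to colour each path $P_{ij}$. For the chosen literal's path (the one with $x_i$ at position $s_j$) the colouring is pinned down from the clause side: if $s_j=1$ then $\vphi(a_j)=0$ forces $\vphi$ along $P_{ij}$ and makes its variable-side endpoint colour $2k$; if $s_j=2$ then $\vphi(b_j)=3$ forces the neighbour of $v_1$ on $P_{ij}$ to colour $2$ and hence the endpoint to colour $1$; and if $s_j=3$ then, since from colour $k+2$ one can reach $2$ but not $0$ by a walk of length exactly $k$, the neighbour of $v_1$ is forced to $2$ and the endpoint to $1$. In all three cases this matches the colour supplied by the variable gadget, because the chosen literal is satisfied. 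For the paths of the other (non-chosen) variables of $\gamma_j$ I colour more freely: if the variable is at position $2$ or $3$ I route $P_{ij}$ so that its vertex adjacent to $v_1$ gets colour $0$, and if it is at position $1$ (so $\vphi(a_j)=2$) I use one of the two legal colourings of $P_{ij}$; since in all these cases the variable-side endpoint may be coloured with either element of $\{1,2k\}=N_{C_{2k+1}}(0)$, the path extends irrespective of $\psi$'s value on that variable. Finally I check that every edge is properly coloured: edges inside $C$, the edges $v_1a_j$ and $b_jv_2$, edges inside the three kinds of cycles, the chords joining path vertices to $C$, and edges inside the paths — all immediate from how $\vphi$ was built.

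The main obstacle is exactly this path-colouring step, which rests on a few precise distance-and-parity facts about $C_{2k+1}$: that the walk of length $2k$ realising the clause cycle is unique (so $\vphi(c^k_j)$ is determined), that the first-variable path with $\vphi(a_j)=2$ has precisely two behaviours, ending at colour $1$ or at colour $2k$, and that from colour $k+2$ a walk of length exactly $k$ reaches $2$ but not $0$ (and, similarly, from $k+1$ it reaches $0$ but not $2$). Each of these I would isolate as a short self-contained sub-claim before running the case analysis.
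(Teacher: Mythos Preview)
Your proposal is correct and follows essentially the same approach as the paper: the same variable-gadget colouring, the same choice of $(a_j,b_j)\in\{(0,1),(2,3),(2,1)\}$ according to which literal satisfies $\gamma_j$, and the same three-way case analysis for extending $\vphi$ along the paths $P_{ij}$. The only difference is presentational---where the paper writes out each path colouring explicitly, you argue via forcing and walk-length/parity facts in $C_{2k+1}$; one small imprecision is your phrase ``the two legal colourings'' for position~$1$ with $\vphi(a_j)=2$ (there are in fact more), but you only need that at least one valid colouring exists for each endpoint value in $\{1,2k\}$, which is what you actually use.
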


    \begin{claimproof}
        Let $\psi$ be a truth assignment satisfying $\phi$. We define $\vphi: G\to C_{2k+1}$ as follows. First, we set $\vphi(v_i):=i$. Moreover, for every variable $x_i$, we extend $\vphi$ to the vertices of the cycle introduced for $x_i$, so that $\vphi(x_i^1)=1$ if $\psi(x_i)=1$, and $\vphi(x_i^1)=2k$ otherwise. Furthermore, for each clause $\gamma_j$ we fix one variable $x_i$ that satisfies $\gamma_j$ in assignment $\psi$. Then, if the first variable satisfies $\gamma_j$, we color $(a_j,b_j)$ with $(0,1)$, if it is the second variable, we color $(a_j,b_j)$ with $(2,3)$, and if it is the third one, we color $(a_j,b_j)$ with $(2,1)$. We extend $\vphi$ to the remaining vertices of the cycle introduced for $\gamma_j$ in the only possible way. 

        Observe that so far, $\vphi$ respect all the edges. It remains to extend $\vphi$ to vertices of paths $P_{ij}$.
        Consider such a path $P_{ij}$ introduced for a clause $\gamma_j$ and its variable $x_i$.

        \textbf{Case 1: $x_i$ is the first variable of $\gamma_j$.} In this case $P_{ij}$ is a path on $2k+1$ vertices with $a_j$ as the first vertex and $x_i^1$ or $x_i^{2k}$ as the last vertex depending on the sign of the occurence of $x_i$ in $\gamma_j$. Moreover, for $\ell=2,\ldots,2k$, $\ell$th vertex of $P_{ij}$ is adjacent to $v_\ell$, so it has to be mapped to one of $\ell-1,\ell+1$. If $a_j$ is colored with $2$, then $\vphi$ can be extended to $P_{ij}$ so that the $2k$ consecutive vertices are mapped respectively to $2,3,4\ldots,2k,0$. Now note that this respects all the edges of $P_{ij}$ as the last vertex is colored either with $1$ or $2k$, both adjacent to $0$. So now assume that $a_j$ is colored with $0$. By the definition of $\vphi$, the variable $x_i$ must satisfy $\gamma_j$, and thus the last vertex of $P_{ij}$ has to be mapped to $2k$. Then we can extend $\vphi$ to $P_{ij}$ so that the consecutive vertices of $P_{ij}$ are mapped respectively to $0,1,\ldots,2k$.

        \textbf{Case 2: $x_i$ is the second variable of $\gamma_j$.} Then $P_{ij}$ is a path on $3$ vertices with $b_j$ being the first vertex. Moreover, the second vertex is adjacent to $v_1$, so it has to be mapped to $0$ or $2$. If $b_j$ is mapped to $1$, then we set $\vphi$ on the second vertex of $P_{ij}$ to $0$, which is adjacent to both $1,2k$, and thus $\vphi$ respects the edges of $P_{ij}$. If $b_j$ is mapped to $3$, then the variable $x_i$ satisfies $\gamma_j$ and thus the last vertex of $P_{ij}$ has to be mapped to $1$. Therefore we can set $\vphi$ on the second vertex of $P_{ij}$ to $2$.

        \textbf{Case 3: $x_i$ is the third variable of $\gamma_j$.} In this case $P_{ij}$ is a path on $k+2$ vertices with $c_j^k$ being the first vertex and $(k+1)$th vertex adjacent to $v_1$. Note that if $(a_j,b_j)$ is mapped to $(0,1)$, $(2,3)$, or $(2,1)$ , then $c_j^k$ is mapped respectively to $k+1$, $k+3$, $k+2$. In the first case, we can extend $\vphi$ so that consecutive $k+1$ vertices of $P_{ij}$ are mapped respectively to $k+1,k+2,\ldots,2k,0$ and since the $k+1$th vertex is mapped to $0$ adjacent to both $1,2k$, $\vphi$ respects all the edges of $P_{ij}.$ In the second case we extend $\vphi$ to $P_{ij}$ so that consecutive $k+1$ vertices of $P_{ij}$ are mapped respectively to $k+3,k+4,\ldots,2k,0,1,0$ and again $\vphi$ respects the edges of $P_{ij}$. So let us consider the third case. Recall that this happens when $x_i$ satisfies $C_j$ and thus the last vertex of $P_{ij}$ is mapped to $1$. We extend $\vphi$ so that the consecutive vertices of $P_{ij}$ are mapped respectively to $k+2,k+1,k,\ldots,2,1$, which clearly respects the edges of $P_{ij}$. This completes the proof of the claim.
    \end{claimproof}

Now we show that the radius (and thus the diameter) of $G$ is bounded.

\begin{claim}
The radius of $G$ is at most $k+1$.
\end{claim}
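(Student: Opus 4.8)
The plan is to exhibit a single vertex $z$ from which every vertex of $G$ is within distance $k+1$; the natural candidate is $v_1$ (or another vertex of the central cycle $C$), since by construction many gadgets were deliberately attached near $v_1$ and $v_2$. First I would handle the ``easy'' vertices: the cycle $C$ itself has diameter $k$, so all of $\{v_0,\dots,v_{2k}\}$ are within $k$ of $v_1$. For each variable cycle on $x_i^0,\dots,x_i^{2k}$, note $x_i^0 = v_0$ is adjacent to $v_1$, so every vertex of that cycle is within $1 + \lfloor (2k+1)/2\rfloor$... — here I must be a bit careful: the variable cycle has $2k+1$ vertices with $x_i^0$ identified with $v_0$, so a vertex $x_i^\ell$ is at distance $\min(\ell, 2k+1-\ell) \le k$ from $v_0$, hence at distance at most $k+1$ from $v_1$. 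Good.

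Next I would treat the clause gadget for $\gamma_j$: the cycle $a_j, b_j, c_j^1,\dots,c_j^{2k-1}$ with $v_1 a_j, v_2 b_j \in E(G)$. Since $a_j$ is adjacent to $v_1$, and $b_j$ is adjacent to $a_j$, all of $a_j,b_j$ are within distance $2$ of $v_1$. For the vertices $c_j^\ell$, I would walk around the clause cycle from $a_j$: $c_j^\ell$ is at cycle-distance at most $\lceil (2k-1)/2 \rceil = k$ from $\{a_j,b_j\}$... again I should check: the clause cycle has $2k+1$ vertices, $a_j$ and $b_j$ adjacent on it, so the vertex farthest from the edge $a_jb_j$ along the cycle is at distance about $k$; combined with $\dist(a_j,v_1)=1$ this could give $k+1$. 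I would verify the exact worst case by indexing: $c_j^{k}$ is the ``antipodal'' vertex and is reached from $a_j$ via $a_j, c_j^{2k-1},\dots,c_j^{k}$ in $k$ steps, so $\dist(v_1, c_j^k) \le k+1$. For the other $c_j^\ell$ the distance is no larger.

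Finally — and this is where I expect the real work — I would bound distances from $v_1$ to the internal vertices of the connecting paths $P_{ij}$. For the first-variable path (length $2k+1$, vertices attached to $v_2,\dots,v_{2k}$), the $\ell$-th vertex is adjacent to $v_\ell$, and $\dist(v_1,v_\ell)\le k$, so that vertex is within $k+1$ of $v_1$; the endpoints $a_j$ and $x_i^{2k}$ (or $x_i^1$) were already covered. For the second-variable path (3 vertices: $b_j$, a middle vertex adjacent to $v_1$, and $x_i^1$ or $x_i^{2k}$), the middle vertex is at distance $1$ from $v_1$. The third-variable path has $k+2$ vertices from $c_j^k$ to $x_i^1$ (or $x_i^{2k}$), with its $(k+1)$-st vertex adjacent to $v_1$; so that $(k+1)$-st vertex is within distance $1$, the last vertex within distance $2$, and the vertices between $c_j^k$ and position $k+1$ are within distance $\le k+1$ of $v_1$ by going ``backward'' through $c_j^k$ (distance $\le k+1$ to $c_j^k$, then up to $k$ more steps — which is too much) or ``forward'' to the position-$(k+1)$ vertex (at most $k$ steps along the path, then $1$ to $v_1$, total $\le k+1$). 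I would pick, for each internal path vertex, whichever of the two directions along the path reaches an already-bounded anchor ($v_1$ via the position-$(k+1)$ vertex, or a $v_\ell$) within the budget, and check the arithmetic works out to $\le k+1$ in every case. The main obstacle is this last bookkeeping: making sure that for \emph{every} vertex on \emph{every} path $P_{ij}$ there is a route to $v_1$ of length at most $k+1$, which requires exploiting that each path either has all its internal vertices attached to distinct $v_\ell$'s (cases 1 and 2) or has a vertex adjacent to $v_1$ at position $k+1$ that splits it into two halves each of length $\le k$ (case 3).
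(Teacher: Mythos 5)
Your proposal is correct and follows essentially the same route as the paper: take $v_1$ as the center, bound the variable cycles through $v_0$, the clause cycles through $a_j$, and the path vertices through their attachments to $C$ (cases 1 and 2) or to the $(k+1)$-th vertex adjacent to $v_1$ (case 3). The arithmetic you sketch for the worst cases works out exactly as in the paper's proof.
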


\begin{claimproof}
We show that each vertex is at distance at most $k+1$ from $v_1$.
It holds for every vertex of the cycle $C$ and every vertex that is adjacent to $C$. The remaining vertices are those introduced for variables, the vertices of cycles introduced for clauses, and those of paths $P_{ij}$ introduced for clauses and their third variables.

The vertices introduced for variables are at distance at most $k$ from $v_0$, and thus at most $k+1$ from $v_1$.
For a cycle introduced for a clause $\gamma_j$, each its vertex is at distance at most $k$ from $a_j$, which is adjacent to $v_1$.
Finally, it remains to check the internal vertices of the paths introduced in the third case. Recall that each such a path consist of $k+2$ vertices ($k$ internal vertices) and the $(k+1)$-th vertex is adjacent to $v_1$. This completes the proof of the claim.
    \end{claimproof}

Therefore the $\homo{C_{2k+1}}$ problem is \NP-hard on diameter-$(2k+2)$ graphs. Moreover, since $|V(G)|=\Oh(n+m)$, there is no algorithm solving $\homo{C_{2k+1}}$ in time $2^{o(|V(G)|)}\cdot |V(G)|^{\Oh(1)}$, unless the ETH fails. This completes the proof.
\end{proof}

\section{Conclusion}
In this paper we studied the computational complexity of $\homo{C_{2k+1}}$ problem on bounded-diameter graphs. We proved that for $k\geq 2$, the $\homo{C_{2k+1}}$ problem can be solved in polynomial-time on diameter-$(k+1)$ graphs and we gave subexponential-time algorithm for diameter-$(k+2)$ graphs. Furthermore, we showed that $\homo{C_{2k+1}}$ cannot be solved in subexponential time on diameter-$(2k+2)$ graphs, unless the ETH fails. We also proved that $\homo{H}$ for triangle-free graph $H$, can be solved in polynomial time on diameter-$2$ graphs.

The main open problem in this area remains the question of whether $3$-\textsc{Coloring} on diameter-$2$ graphs can be solved in polynomial time. However, as more reachable, we propose the following future research directions.
(i) Is the $\homo{C_{2k+1}}$ problem \NP-hard for the subexponential-time cases, i.e., diameter-$(k+2)$ graphs? 
(ii) What is the computational complexity of $\homo{C_{2k+1}}$ on diameter-$d$ graphs for $d\in \{k+3,\ldots,2k-1\}$?

\smallskip

\textbf{Acknowledgements.} We are grateful to Paweł Rzążewski for inspiring and fruitful discussions.

\newpage
\bibliographystyle{plain}
\bibliography{main}

\end{document}